\numberwithin{equation}{section}
\newcommand{\bbM}{\mathbb{M}}
\newcommand{\R}{\mathbb{R}}
\newcommand{\N}{\mathbb{N}}
\newcommand{\C}{\mathbb{C}}
\mathchardef\emptyset="001F
\numberwithin{equation}{section}
\newtheorem{theorem}{Theorem}[section]
\newtheorem{lemma}[theorem]{Lemma}
\newtheorem{remark}[theorem]{Remark}
\newtheorem{definition}[theorem]{Definition}
\newtheorem{proposition}[theorem]{Proposition}
\newtheorem{notation}[theorem]{Notation}
\newtheorem{corollary}[theorem]{Corollary}
\newcommand{\eps}{\varepsilon}
\newcommand{\weakto}{\rightharpoonup} 
\newcommand{\aein}{\text{a.e.\ in }}
\newcommand{\down}{\downarrow}
\newcommand{\weaksto}{\overset{*}{\rightharpoonup}}
\newcommand{\AC}{\mathrm{AC}}
\newcommand{\dualoperator}
 \def\calE{{\mathcal E}} 
 \def\calH{{\mathcal H}} 
 \def\calK{{\mathcal K}} 
\def\calM{{\mathcal M}}  
 \def\calQ{{\mathcal Q}} \def\calR{{\mathcal R}}
\def\calS{{\mathcal S}}  
\def\calV{{\mathcal V}} \def\calW{{\mathcal W}} 
 \def\rmn{{\mathrm n}}
  \def\rmC{{\mathrm C}}
\def\rmD{{\mathrm D}} \def\rmE{{\mathrm E}}
\def\FG{\mathbf}
 \def\bfQ{{\FG Q}}
\def\dd{\;\!\mathrm{d}} 
\newcommand{\pairing}[4]{ \sideset{_{ #1 }}{_{ #2 }}  {\mathop{\langle #3 , #4
\rangle}}}
\newcommand{\nchi}{{\raise.2ex\hbox{$\chi$}}}
\newcommand{\piecewiseConstant}[2]{\overline{#1}_{\kern-1pt#2}}
\newcommand{\upiecewiseConstant}[2]{\underline{#1}_{\kern-1pt#2}}
\newcommand{\piecewiseLinear}[2]{{#1}_{\kern-1pt#2}}
\newcommand{\pwwll}[2]{\widehat{#1}_{\kern-1pt#2}}
\newcommand{\piecewiseVariational}[2]{\tilde{#1}_{\kern-1pt#2}}
\newcommand{\DDDn}[2]{\begin{array}[t]{c}#1\vspace*{-1em}\\_{#2}\end{array}}
\newcommand{\dddn}[2]{\DDDn{\begin{array}[t]{c}\underbrace{#1}\vspace*{.6em}\end{array}}{\text{\footnotesize #2}}}
\newcommand{\foraa}{\text{for a.a.}}
\newcommand{\BV}{\mathrm{BV}}
\newcommand{\BD}{\mathrm{BD}}
\newcommand{\Dir}{\mathrm{Dir}}
\newcommand{\Neu}{\mathrm{Neu}}
 \def\trait #1 #2 #3 {\vrule width #1pt height #2pt depth #3pt}
 \def\fin{\hfill
         \trait .3 5 0
         \trait 5 .3 0
         \kern-5pt
         \trait 5 5 -4.7
         \trait 0.3 5 0
 \medskip}
\newcommand{\bbC}{\mathbb{C}}
\newcommand{\bbD}{\mathbb{D}}
\newcommand{\mt}{\bbM}
\newcommand{\sym}{\mathrm{sym}}
\newcommand{\dev}{\mathrm{D}}
\newcommand{\dip}[2]{\mathrm{H}(#1,#2)}
\newcommand{\dipname}{\mathrm{H}}
\newcommand{\did}[1]{\mathrm{R}(#1)}
\newcommand{\didname}{\mathrm{R}}
\newcommand{\Gdir}{\Gamma_{\Dir}}
\newcommand{\Gneu}{\Gamma_{\Neu}}
\newcommand{\As}{A_{\mathrm{m}}}
\newcommand{\Hs}{H^{\mathrm{m}}}
\newcommand{\ass}{a_{\mathrm{m}}}
\newcommand{\disv}[2]{\calV_{#1,#2}}
\newcommand{\psin}[1]{\Psi_{#1}}
\newcommand{\psie}[2]{\Psi_{#1,#2}}
\newcommand{\Mnn}{{\mathbb{M}^{n\times n}_{\mathrm{sym}}}}
\newcommand{\MD}{{\mathbb M}^{n{\times}n}_{\mathrm{D}}}
\newcommand{\Sym}{\mathrm{Sym}}
\newcommand{\Lin}{\mathrm{Lin}}
\newcommand{\Mb}{\mathrm{M}_{\mathrm{b}}}
\newcommand{\MbD}{{\Mb(\Omega \cup \Gdir; \MD)}}
\newcommand{\Lnn}{{L^2(\Omega; \Mnn)}}
\newcommand{\Linftyn}{{L^\infty(\Omega; \MD)}}
\newcommand{\DVito}{\mathcal{D}}
\newcommand{\DVitos}{\mathcal{D}_\nu^{*,\mu}}
\newcommand{\DVitoskk}{\mathcal{D}_{\nu_k}^{*,\mu_k}}
\newcommand{\DVitosred}{\mathcal{D}^{*,\mu}}
\newcommand{\diver}{\mathrm{div}}
\newcommand{\Diver}{\mathrm{Div}}
\newcommand{\hn}{\mathscr{H}^{n-1}}
\newcommand{\ol}{\overline}
\newcommand{\sig}[1]{\mathrm{E}(#1)}
\newcommand{\sft}{\mathsf{t}}
\newcommand{\sfu}{\mathsf{u}}
\newcommand{\sfz}{\mathsf{z}}
\newcommand{\sfp}{\mathsf{p}}
\newcommand{\sfe}{\mathsf{e}}
\newcommand{\sfs}{\mathsf{s}}
\newcommand{\sff}{\mathsf{f}}
\newcommand{\serifsigma}{{\sansmath \sigma}}
\newcommand{\sfq}{\mathsf{q}}
\newcommand{\Me}[4]{\calM_{\eps}^{\mu,\nu}(#1,#2,#3,#4)}
\newcommand{\Mename}{\calM_{\eps}^{\mu,\nu}}
\newcommand{\Mredname}{\calM_{\eps,\mathrm{red}}^{\mu, \nu}}
\newcommand{\Mli}[4]{\calM_{0}^{\mu,\nu}(#1,#2,#3,#4)}
\newcommand{\Mliname}{\calM_{0}^{\mu,\nu}}
\newcommand{\Mliredname}{\calM_{0,\mathrm{red}}^{\mu,\nu}}
\newcommand{\oMliname}{\calM^{0,0}_0}
\newcommand{\oMliredname}{\calM_{0,\mathrm{red}}^{0,0}}
\newcommand{\Mlizero}[4]{\calM_{0}^{\mu,0}(#1,#2,#3,#4)}
\newcommand{\Mlinamezero}{\calM_{0}^{\mu,0}}
\newcommand{\Mliredzero}[8]{\calM_{0,\mathrm{red}}^{\mu,0}(#1,#2,#3,#4,#5,#6,#7, #8)}
\newcommand{\Mlirednamezero}{\calM_{0,\mathrm{red}}^{\mu,0}}
\newcommand{\tilded}{\widetilde{d}}
\newcommand{\dLtwo}{d_{L^2}}
\newcommand{\enen}[1]{\calE_{#1}}
\newcommand{\newmu}{{ \mu}}
\newcommand{\adm}[5]{\mathcal{A}(#1,#2; [#3,#4]{\times}#5)}
\newcommand{\eadm}[5]{\mathcal{EA}(#1,#2; [#3,#4]{\times}#5)}
\newcommand{\Qpp}{\bfQ_{\scriptstyle \tiny \mathrm{PP}}}
\newcommand{\Qha}{\bfQ_{\scriptstyle \tiny \mathrm{H}}}
\newcommand{\Hpp}{\mathcal{H}_{\scriptstyle \tiny \mathrm{PP}}}
\newcommand{\CASE}[1]{\noindent\underline{\emph{Case #1}}}
\newcommand{\BVZ}{\BV_0}
\newcommand{\BVZZZ}{\BV_0^{0,0}}
\newcommand{\BVA}[1]{\BV_{0}^{#1,0}}
\newcommand{\BVB}[1]{\BV_0^{#1}}
\begin{document}
\title[Singular limits of a  system for damage and plasticity]{Singular limits  of a  coupled elasto-plastic damage system 
\smallskip
\\
as viscosity and hardening vanish}

\author{Vito Crismale, Giuliano Lazzaroni  \and Riccarda Rossi}

\address{V.\ Crismale, Dipartimento di Matematica ``Guido Castelnuovo'', Sapienza Universit\`a di Roma, Piazzale Aldo Moro 2, I-00185 Roma, Italy}
\email{vito.crismale\,@\,uniroma1.it}

\address{G.\ Lazzaroni, Dipartimento di Matematica e Informatica ``Ulisse Dini'',
Universit\`a degli Studi di Firenze, Viale Morgagni 67/a, 50134 Firenze, Italy}
\email{giuliano.lazzaroni\,@\,unifi.it}

\address{R.\ Rossi, DIMI, Universit\`a degli studi di Brescia,
via Branze 38, 25133 Brescia, Italy}
\email{riccarda.rossi\,@\,unibs.it}

\thanks{The authors have been funded by the Italian Ministry of University and Research through two different projects:
MIUR - PRIN project
2017BTM7SN 
\emph{Variational Methods for stationary and evolution problems with singularities and interfaces},
MIUR - PRIN project 2017TEXA3H \emph{Gradient Flows, Optimal Transport and Metric Measure Structures}. 
G.L.\ and R.R.\ have been partially supported by the  Gruppo Nazionale per  l'Analisi Matematica, la  Probabilit\`a  e le loro Applicazioni (GNAMPA) of the Istituto Nazionale di Alta Matematica (INdAM)}


\begin{abstract} 
The paper studies the asymptotic analysis of a model coupling elastoplasticity and damage depending on three parameters -- governing viscosity, plastic hardening, 
and convergence rate of plastic strain and displacement to equilibrium -- as they vanish in different orders. 
The notion of limit evolution obtained is proven to coincide in any case with a notion introduced by Crismale and Rossi in \cite{Crismale-Rossi};
 moreover, such solutions are closely related to those obtained in the vanishing-viscosity limit by Crismale and Lazzaroni in \cite{Crismale-Lazzaroni}, for the analogous model where only the viscosity parameter was present.
\end{abstract}
\maketitle
\noindent
\textbf{2020 Mathematics Subject Classification:}  
35A15, 
35Q74, 
74C05. 
\par
\noindent
\textbf{Key words and phrases:} rate-independent systems, variational models, vanishing viscosity and hardening, Balanced Viscosity solutions, damage, elasto-plasticity.

\medskip

\section{Introduction}
Rate-independent processes model evolutionary phenomena where the external loading is much slower than the internal oscillations of materials, while viscosities may be neglected. Despite a wide literature on the subject (see \cite{MieRou15} and references therein), a further understanding is needed of the relations between the  different notions of solution that have been proposed. In particular, a problem of interest in applications is to determine which kind of solution captures the limiting behavior of dynamical systems for small viscosity or inertia. For this reason, in this paper we compare different notions of solution, obtained with different approximation methods as viscosities tend to zero at different rates (but with no inertia). 
\par
%
We focus  on  a    rate-independent system modeling damage in an elasto-plastic body occupying  a bounded  Lipschitz domain $\Omega\subset \R^n$, $n\geq 2$. The model was advanced and first studied in \cite{AMV14, 
AMV15}, 
while the existence of \emph{globally minimizing  quasistatic evolutions} (or, equivalently, \emph{Energetic solutions}) was first proved in 
\cite{Crismale}. In \cite{Crismale-Lazzaroni, Crismale-Rossi}, the  \emph{vanishing-viscosity} approach was instead adopted to  find  the so-called 
\emph{Balanced Viscosity} solutions,  obtaining the rate-independent system as the limit of a viscously perturbed system. 
  We refer to the pioneering \cite{EfeMie06RILS}, and the subsequent
\cite{MRS12,  MRS13}, for the definition and properties of such solutions in the context of an `abstract' rate-independent system.  The vanishing-viscosity technique has also  been adopted in various concrete applications, 
 ranging from plasticity  (cf., e.g., \cite{DMDMM08, DalDesSol11, BabFraMor12, FrSt2013, Sol14}), 
	 to damage,  fracture, and fatigue  (see for instance \cite{KnMiZa08ILMC, Lazzaroni-Toader, KRZ13, Almi17, CL17fra, ACO2018}).
\par
In this paper we aim to gain further insight into  the  different ways of constructing  \emph{Balanced Viscosity} solutions to the model for damage and plasticity from \cite{AMV15}, which were
explored in \cite{Crismale-Lazzaroni} and \cite{Crismale-Rossi}.
We show that these notions of solutions essentially coincide if the hardening vanishes together with viscosities, while they retain different features if the hardening parameter is positive. In particular, it turns out that perfect plasticity coupled with damage may equivalently be approximated  by means of processes where viscosity is confined to the flow rule for damage, or with viscosity also in the momentum equation and in the plastic flow rule. 
\par 
\subsection*{The model}
The rate-independent process we are going to address
 describes the evolution, in the time interval $(0,T)$, of the  \emph{displacement} 
$u: (0,T) \times \Omega \to \R^n$, of the plastic strain  $p: (0,T)\times \Omega \to \MD$, and of the damage variable $z: (0,T)\times \Omega \to  [0,1]$ that describes  the soundness of the material: 
for $z(t,x)=1$ (respectively,  $z(t,x)=0$) the material is in the undamaged (fully damaged, resp.) state,  at the time $t\in (0,T)$ and `locally' around the point $x\in \Omega$. 
In fact, the related PDE system 
 consists of 
 \begin{subequations}
 \label{RIS-intro}
 \begin{itemize}
\item[-] the momentum balance
\begin{align}
\label{mom-balance-intro}
 - \mathrm{div}\,\sigma = f  \quad \text{ in } \Omega \times (0,T)\,, \qquad \quad  \sigma \rmn =g \text{ on } \Gneu \times (0,T), 
 \end{align}
 with $f,\,g$ some external forces,  $\rmn$ the outer unit normal vector to $\Omega$,  $\sigma$ the  \emph{stress} tensor
\begin{equation} \label{stress-intro}
 \sigma =  \mathbb{C}(z)e  
 \quad   \text{ in } \Omega \times (0,T), 
\end{equation}
$\mathbb{C}$ the elastic stress tensor,  and $e: (0,T)\times \Omega \to \Mnn$ the elastic strain; together  with the plastic strain $p$, the elastic strain $e$ concurs to 
  the kinematic admissibility condition for the \emph{strain} $ \rmE(u)  = \frac{\nabla u + \nabla u^T}{2}$, i.e.\
 \begin{equation}
 \label{kam-intro}
    \rmE(u)  = e+p   \quad  \text{ in } \Omega \times (0,T);
 \end{equation} 
 \item[-] the flow rule for the damage variable $z$
 \begin{align}
\label{flow-rule-dam-intro}
\partial\did{\dot{z}}  + \As (z)+ W'(z) \ni - \tfrac12\bbC'(z)e : e  \quad  \text{ in } \Omega \times (0,T),
\end{align}
where $\partial \didname  : \R \rightrightarrows \R$ denotes the convex analysis subdifferential of  the density of dissipation potential
\[
\didname:\R \to [0,+\infty]  \  \text{ defined by } \   \did{\eta}:= \left\{ \begin{array}{ll}
\kappa |\eta| & \text{ if } \eta\leq 0,
\\
+\infty & \text{ otherwise},
\end{array}
\right.
\]
encompassing the unidirectionality in the evolution of damage,  $\As$ is 
 the  $\mathrm{m}$-Laplacian operator, with $\mathrm{m}>\tfrac n2$, 
 and $W$ is a suitable nonlinear, possibly nonsmooth, function; 
  \item[-] the flow rule for the plastic tensor 
 \begin{align}
&
\label{flow-rule-plast-intro}
\partial_{\dot{p}}  \dip{z}{\dot{p}}  \ni \sigma_{\mathrm{D}}  \quad \text{ in } \Omega \times (0,T),
\end{align}
with $\sigma_\dev$ the deviatoric part of the stress tensor $\sigma$ and $\dipname(z, \cdot)$ the density of the plastic dissipation potential.
 \end{itemize}
System \eqref{mom-balance-intro}--\eqref{flow-rule-plast-intro} is complemented by 
the boundary conditions 
\begin{equation}
 \label{viscous-bound-cond}
 u=w  \text{ on } \Gdir \times (0,T),   \qquad 
 \sigma \rmn =g \text{ on }  \Gneu \times (0,T), \qquad \partial_{\rmn} z =0 \text{ on } \partial\Omega \times (0,T), 
  \end{equation}
where $\Gdir$ is   the Dirichlet part of the boundary $\partial \Omega$ and  $w$ a  time-dependent Dirichlet loading, while 
$\Gneu$ is the Neumann part of $\partial\Omega$ 
and $g$ an assigned traction.
 \end{subequations}
 \par
 Alternative models for damage and plasticity have been analyzed in, e.g., \cite{Roub-Valdman2016, RouVal17, DavRouSte19}, albeit from a different perspective. In fact, those papers address
the rate-independent evolution of the damage and plastic processes coupled with a \emph{rate-dependent} momentum balance, featuring viscosity and  even inertial terms. Therefore, the resulting system has a \emph{mixed} rate-dependent/independent character and is formulated in terms of a  weak, energetic-type notion of solution. Instead, both in  \cite{Crismale-Lazzaroni}  and \cite{Crismale-Rossi},
(two  distinct)
 viscous regularization procedures, described below, were advanced to construct Balanced Viscosity solutions to
 the fully rate-independent 
system \eqref{RIS-intro}. 
 \subsection*{Balanced Viscosity solutions} 
 In \cite{Crismale-Lazzaroni} 
 the vanishing-viscosity approximation of 
  system \eqref{RIS-intro} was carried out by perturbing  the 
 damage flow rule by a viscous term,
 which led to the \emph{viscously regularized} system 
 \begin{subequations}
 \label{viscous-CL-intro}
 \begin{align}
 & - \mathrm{div}\, \sigma = f   \quad    \text{with $\sigma = \bbC(z) e$} &&  \text{ in } \Omega \times (0,T), 
  \\
&  \partial\did{\dot{z}}  +\eps \dot z  + \As (z)+ W'(z) \ni - \tfrac12\bbC'(z)e : e  &&    \text{ in } \Omega \times (0,T),
\\
& \partial_{\dot{p}}  \dip{z}{\dot{p}}  \ni \sigma_{\mathrm{D}}  &&    \text{ in } \Omega \times (0,T), 
 \end{align}
{supplemented by the boundary conditions}
\begin{equation} 
\label{overall-BC}
 u=w  \text{ on } \Gdir \times (0,T),  \quad    \sigma \rmn =g \text{ on } \Gneu \times (0,T), \quad    \partial_\rmn  z =0 \text{ on } \partial\Omega  \times (0,T).
 \end{equation}
 \end{subequations}
 Passing to the limit in a reparameterized version of \eqref{viscous-CL-intro}
 led to a first construction of $\BV$ solutions to system  \eqref{RIS-intro}. 
 We shall illustrate the  notion of \emph{parameterized} Balanced Viscosity  solution thus obtained in the forthcoming Section 
  \ref{ss:3.1}.    In what follows, for quicker reference we will call the $\BV$ solutions from \cite{Crismale-Lazzaroni} 
  \emph{$\BVZ$ solutions to system  \eqref{RIS-intro}},
where the subscript 0 indicates that the solutions are obtained in the limit as $\eps\down0$.  
  \par
  In \cite{Crismale-Rossi}, a different construction of $\BV$ solutions 
  to system  \eqref{RIS-intro}
   was proposed, based on  a  viscous regularization of the momentum balance and of the plastic flow rule, \emph{in addition} to that of the damage flow rule.  This alternative  approach was first proposed in 
   \cite{MRS14} in a finite-dimensional context, and extended to infinite-dimensional systems in the recent \cite{MieRosBVMR}. Both papers address the vanishing-viscosity analysis of an abstract evolutionary  system, that can be thought of as prototypical of  rate-dependent systems  in  solid mechanics, 
   governing the evolution of an elastic variable $\mathsf{u}$ and of an internal variable $\mathsf{z}$. In those papers, 
   $\mathsf{z}$ has relaxation time $\eps$,   while 
   $\mathsf{u}$ has a viscous damping with relaxation time $\eps^\alpha$, $\alpha >0$. 
To emphasize the occurrence  of these three time scales (the time scale $\eps^0=1$ of the external loading,  the relaxation time $\eps$ of $\mathsf{z}$, and the - possibly different - relaxation time $\eps^\alpha$ of $\mathsf{u}$), 
the term `multi-rate system' was used in  \cite{MRS14}. Therein, as well as in 
   \cite{MieRosBVMR}, it was shown that, in  the
three cases $\alpha \in (0,1)$, $\alpha=1$ and $\alpha>1$, 
   the vanishing-viscosity analysis   leads  to different notions of 
 Balanced-Viscosity solutions,   including in particular different descriptions of the
jump behavior developing in the limit $\eps \down 0$. 
   \par
   Thus, along the lines of       \cite{MRS14}, in \cite{Crismale-Rossi} the authors addressed the following, alternative, viscous regularization of system \eqref{RIS-intro}:
   \begin{subequations}
 \label{RD-intro}
\begin{align}
\label{VISC-mom-balance-intro}
& 
- \mathrm{div}(\eps\nu  \mathbb{D}   \rmE{(\dot{u})}   + \sigma ) = f    &&   \text{ in } \Omega \times (0,T), 
\\
& 
\label{VISC-dam-intro}
\partial\did{\dot{z}} +\eps \dot{z} + \As (z) + W'(z) \ni - \tfrac12\bbC'(z)e : e   &&   \text{ in } \Omega \times (0,T),
\\
\label{VISC-pl-intro}
& \partial_{\dot{p}}  \dip{z}{\dot{p}} + \eps\nu  \dot{p} +\newmu  p \ni   \sigma_{\mathrm{D}}   &&  \text{ in } \Omega \times (0,T),
 \end{align}
{supplemented by the boundary conditions}
\begin{equation}
 u=w  \text{ on } \Gdir \times (0,T),  \quad  (\eps\nu  \mathbb{D}  \rmE{(\dot{u})}  + \sigma) \rmn = g \text{  on }\Gneu \times (0,T),  \quad   \partial_\rmn  z =0 \text{ on } \partial\Omega  \times (0,T),
 \end{equation}
  \end{subequations} 
 where $\mathbb{D}$ is a positive-definite fourth-order tensor.
System \eqref{RD-intro} features  a viscous regularization \emph{both} in
the damage flow rule \emph{and}  in 
 the displacement equation  and  the  plastic flow rule. 
Let us now illustrate the role of the various parameters appearing therein, namely
  the
  \begin{itemize}
  \item[-] (vanishing-)viscosity parameter $\eps>0$,
  \item[-] (vanishing-)hardening parameter $\mu>0$,
  \item[-] additional parameter $\nu>0$, which was required to fulfill $ \nu \leq \mu $  in order to get suitable a priori estimates. We have referred to $\nu$ as a \emph{rate} parameter, since it sets
  the mutual rate at which, on the one hand, the   displacement and the plastic strain converge to equilibrium and rate-independent
  evolution, and,  on the other hand,  the  damage parameter
  converges to rate-independent evolution. More precisely, 
 if $\nu>0$ stays fixed 
then
$u$ and $p$ converge at the same rate as $z$, while their  convergence occurs at a faster rate if $\nu \down0$. This is clear if one chooses  e.g.\  $\nu = \mu = \eps$, so that the viscous terms $  \rmE{(\dot{u})}$ and $\dot p $ in \eqref{VISC-mom-balance-intro} and 
 \eqref{VISC-pl-intro} are modulated by 
the 
  coefficient $\eps^2 $, as opposed to the coefficient $\eps$ in the damage flow rule.
   Observe that, upon taking the vanishing-hardening limit $\mu \down 0$,  the constraint $\nu \leq \mu$ forces the 
  joint
  vanishing-viscosity and vanishing-hardening limit to occur at a faster rate for $u$ and $p$ than for $z$. 
  \end{itemize}
 \par We will refer to the vanishing-viscosity analyses in system \eqref{RD-intro} as \emph{full}, as opposed to the \emph{partial} vanishing-viscosity approximation provided by system \eqref{viscous-CL-intro}, where only the damage flow rule is regularized. 
 \par
  Indeed, in \cite{Crismale-Rossi} three \emph{full} vanishing-viscosity analyses
   have been carried out for system \eqref{RD-intro}, 
    leading to three different notions of solution for system \eqref{RIS-intro}, possibly regularized by a hardening term. Let us briefly illustrate them. 
   \begin{description}
   \item[\textbf{(1) $\BVB{\mu,\nu}$ solutions}]   The limit passage in a (reparameterized) version of  \eqref{RD-intro} as $\eps \down 0$, while the positive parameters $\mu$ and $\nu$ stayed fixed,  has led to  $\BV$ solutions for a variant of the  system \eqref{RIS-intro}, where the plastic flow rule was regularized by the hardening term $\mu p$,
   i.e.\ for 
   the rate-independent system with hardening
   \begin{subequations}
   \label{RIS-hard-intro}
   \begin{align}
    & - \mathrm{div} ( \bbC(z) e) = f    &&    \text{ in } \Omega \times (0,T), 
    \\
&  \partial\did{\dot{z}}   + \As (z)+ W'(z) \ni - \tfrac12\bbC'(z)e : e  &&   \text{ in } \Omega \times (0,T),
\\
& \partial_{\dot{p}}  \dip{z}{\dot{p}}   + \newmu p  \ni \sigma_{\mathrm{D}}  &&   \text{ in } \Omega \times (0,T), 
\end{align}
{coupled with the boundary conditions}
\begin{equation}
u=w  \text{ on } \Gdir \times (0,T),   \quad 
 \sigma \rmn=g \text{ on }  \Gneu \times (0,T)\quad \qquad \partial_\rmn z =0 \text{ on } \partial\Omega \times (0,T). 
\end{equation}
   \end{subequations}
    The $\BV$ solutions to system \eqref{RIS-hard-intro} thus constructed reflect their origin from a  system viscously regularized in all of the three variables $u$, $z$, $p$. In fact,   in the jump regime  the system may switch to viscous behavior in the three variables  $u$, $z$, and  $p$.  Since  the convergence of $u$, $z$, and $p$ to 
   elastic equilibrium and rate-independent evolution 
   has occurred  at the \emph{same rate} (as $\nu>0$ stayed fixed), viscous behavior in 
   $u$, $z$, and $p$ may \emph{equally} intervene in the jump regime.  We shall  refer to  such solutions as $\BVB{\mu,\nu}$
   \emph{solutions to system}{} \eqref{RIS-hard-intro}. The  subscript  $0$ suggests that they have been obtained in the vanishing-viscosity limit $\eps\down 0$, while the occurrence of the parameter $\mu$ keeps track of the 
   presence of hardening. Also the parameter $\nu$ appears in the notation,
   since
   it  still features in the limiting evolution as a coefficient 
of the viscous terms in the displacement equation and in the plastic flow rule, which
   may be active in the jump regime. 
   \item[\textbf{(2) $\BVA{\mu}$ solutions}] The limit passage  in a (reparameterized) version of  \eqref{RD-intro} as $\eps \down 0$ simultaneously with $\nu \down 0$,  while $\mu>0$  stayed fixed,  has again led to  $\BV$ solutions for  the rate-independent  elasto-plastic damage system with hardening \eqref{RIS-hard-intro}. 
   These solutions still  have   the feature that, 
 in the jump regime,  the system may   switch to viscous behavior in   $u$, $z$, and  $p$.  However, the $\BV$ solutions thus obtained reflect the fact   that  the convergence of $u$ and $p$ to 
   elastic equilibrium and rate-independent evolution 
  has    occurred  at a \emph{faster rate} (as $\nu\down 0$) than that for $z$. To emphasize this,  such solutions were termed  
  \emph{$\BV $ solutions to the multi-rate system for damage with hardening}. 
   We will refer to them  as \emph{$\BVA{\mu}$ solutions to system \eqref{RIS-intro}}. In this notation, the double occurrence of $0$ 
 relates to the fact that such solutions were obtained in  the limit $\eps,  \nu \down 0$, as opposed to the $\BVZ$ solutions from \cite{Crismale-Lazzaroni} (arising  in the limit of system \eqref{RD-intro} as $\eps \down 0$ and $\mu=\nu=0$). 
  \item[\textbf{(3) $\BVZZZ$ solutions}]   The limit passage in a (reparameterized) version of  \eqref{RD-intro} as $\eps, \, \mu, \, \nu \down 0$
  \emph{jointly}
   led to  \emph{$\BV $ solutions to the multi-rate system for damage and perfect plasticity} \eqref{RIS-intro}, again reflecting the fact that the convergence of $u$ and $p$ to 
   elastic equilibrium and rate-independent, \emph{perfectly plastic} evolution happened at a rate faster than that for $z$.  The vanishing-viscosity solutions arising from this joint limit will receive specific attention in this paper. In what follows, we will refer to them  as \emph{$\BVZZZ$ solutions to system \eqref{RIS-intro}}.
   Here, the triple occurrence of $0$ 
 relates to the fact that such solutions were obtained in  the limit $\eps, \, \mu, \, \nu \down 0$  
 and thus immediately suggests the comparison with 
 the 
 $\BVZ$ solutions from~\cite{Crismale-Lazzaroni}. 
   \end{description} 
 \subsection*{Our results}
 The aim of this paper is twofold:
 \begin{itemize}
 \item[(i)] We propose to  gain further insight into 
   $\BVZZZ$ solutions to system \eqref{RIS-intro} 
 (cf.\ item $\#3$ in the above list).  More precisely,  first of all  we shall provide a \emph{differential characterization} of such solutions, cf. {\bf \underline{Proposition 
 \ref{prop:diff-charact-BV-CR}}} ahead. 
  Relying on that,  in 
  {\bf \underline{Theorem
 \ref{thm:delusione}}}
  we will subsequently prove that, 
  after an initial phase in which  $z$
is constant  while $u$ and $p$, evolving by viscosity,   relax to elastic equilibrium  and to rate-independent evolution, respectively, 
it turns out that
$u$ never leaves the equilibrium, and $p$ the rate-independent regime. 
Afterwards,
the evolution of  system  \eqref{RIS-intro}  is captured by the notion of 
 $\BVZ$ solution as obtained in \cite{Crismale-Lazzaroni} by taking the vanishing-viscosity limit as $\eps\down 0  $ of system \eqref{viscous-CL-intro}. In other words, viscosity in $u$ and $p$ (may) intervene only in an initial phase in the reparameterized time scale, corresponding to a time discontinuity in the time scale of the loading.  After this initial phase, 
  the $\BVZZZ$  solutions  to the perfectly plastic system for damage  \eqref{RIS-intro}
 arising by the \emph{full} vanishing-viscosity approach of   \cite{Crismale-Rossi}
   comply with  the same notion of solution of 
    \cite{Crismale-Lazzaroni}, where viscosity for $u$ and $p$ was neglected.
 \par
We point out  that   an analogous  characterization
can be proved for the  $\BVA{\mu}$ solutions to the multi-rate system with  \emph{fixed} hardening parameter $\mu>0$,   obtained in the limit passage  $\#2$ of the above list; see Remark \ref{rmk:comment-below} ahead. 
  \item[(ii)] 
  We aim to `close the circle' in the analysis of the singular limits of system \eqref{RD-intro}, by showing that,
  for two given sequences $(\mu_k)_k,\, (\nu_k)_k \subset (0,+\infty)$ with $ 0 < \nu_k \leq \mu_k \down 0$  as $k\to\infty$, 
  \begin{enumerate}
  \item $\BVB{\mu_k,\nu_k}$ solutions to the \emph{single}-rate system with hardening 
  converge  as $k\to\infty$   to a $\BVZZZ$ solution of the perfectly plastic damage system, which will be 
  shown in Theorem \ref{teo:exparBVsolsingle} ahead;
  \item $\BVA{\mu_k}$ solutions to the \emph{multi}-rate system with hardening converge  as $k\to \infty$  to a  $\BVZZZ$ solution, cf.\ Theorem \ref{teo:exparBVsol}. 
  \end{enumerate}
  In particular, we will prove that the diagram  in Figure \ref{fig:diag}  commutes. 
  \end{itemize}
%
\begin{figure}[!h]
\begin{center}
\begin{tikzpicture} 
  \matrix (m) [matrix of math nodes,row sep=6em,column sep=8em,minimum width=4em]
	{
				&	\begin{array}{c} \BVB{\mu,\nu} \\ \mu\ge\nu>0 \end{array}	
\\
	\begin{array}{c} \mathrm{V}_\eps^{\mu,\nu} \\ \eps,\nu,\mu>0 \end{array}	&	\BVZZZ 							&	
\\
				&	\begin{array}{c} \BVA{\mu} \\ \mu>0 \end{array}	&	
\\
	};
  \path[-stealth]
	(m-2-1) edge node [above] {\tiny \hspace{1em} \begin{turn}{34} \hspace{-1em} $\eps\to0$\end{turn}} node [below,right] {\tiny \hspace{-1em} \begin{turn}{34} \hspace{-1em} [CR19,\S6.1] \end{turn}} (m-1-2)
	(m-2-1) edge node [above] {\tiny $\eps,\nu,\mu\to0$} node [below] {\tiny [CR19,\S7]} (m-2-2)
	(m-2-1) edge node [below] {\tiny \hspace{1em} \begin{turn}{-34} \hspace{-2em}  [CR19,\S6.2]\end{turn}} node [right] {\tiny \hspace{-1em} \begin{turn}{-34} \hspace{-1em} $\eps,\nu\to0$\end{turn}} (m-3-2)
	(m-1-2) edge [dashed] node [left] {\tiny \begin{turn}{90}$\nu\le\mu\to0$\end{turn}} node [right] {\tiny \begin{turn}{90}(Theorem \ref{teo:exparBVsolsingle})\end{turn}} (m-2-2)
	(m-3-2) edge [dashed] node [left] {\tiny \begin{turn}{90}$\mu\to0$\end{turn}} node [right] {\tiny \begin{turn}{90}(Theorem \ref{teo:exparBVsol})\end{turn}} (m-2-2)
%
;
\end{tikzpicture}
\caption{\small The diagram displays the asymptotic relations   between different notions of solution.
The symbol $\mathrm{V}_\eps^{\mu,\nu}$ indicates solutions to the viscous  system \eqref{RD-intro}.
Solid lines represent convergences (along sequences) to the limiting solutions of type $\BVB{\mu,\nu}$, $\BVA{\mu}$, $\BVZZZ$,
already proved in  \cite[Sections 6.1, 6.2, 7]{Crismale-Rossi}.
Dashed lines represent convergences proved in the present paper, the corresponding theorems being referred to  in the diagram.
Starting from $\mathrm{V}_\eps^{\mu,\nu}$ one may either pass to the limit as $\eps\down0$ and then as $\mu,\,\nu\down0$; or pass to the limit as $\eps,\,\nu\down0$ and then as $\mu\down0$.
Since there is no uniqueness, it is not guaranteed that one gets the very same solution found in the joint limit $\eps, \, \mu, \, \nu \down 0$.
However, we prove that through the three different procedures one finds evolutions satisfying the same \emph{notion} of solution.
In this sense we may  say that the diagram commutes. 
}
\label{fig:diag}
\end{center}
\end{figure}
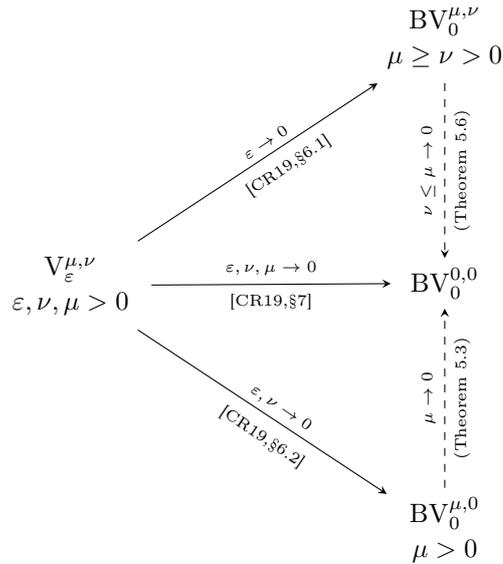

We emphasize that these 
 results establish asymptotic relations between  $\BV$ solutions, already obtained as  vanishing-viscosity limits. These convergence analyses
show that $\BVZZZ$ solutions are robust enough to capture the asymptotic behavior of a wide class of  $\BV$  solutions depending on different parameters. However, $\BVZZZ$ solutions reduce to $\BVZ$ solutions  after an initial phase in which $u$ and $p$ converge to elastic equilibrium and stability, respectively; in particular, if the initial conditions are at equilibrium, then the two notions of solution coincide. 
This feature may be traced back to the convex character of perfect plasticity and to the multi-rate 
character inherent to the system, since  with $\nu \down 0$ we have forced faster convergence to equilibrium in $u$ and stability in $p$. 
  \paragraph{\bf Plan of the paper.} In  Section \ref{s:2} we  detail  the setup of the problem, 
  list our assumptions, and provide some preliminary results.
 In  Section \ref{s:3}  we illustrate  the notion of $\BVZ$ solution to system 
  \eqref{RIS-intro} arising from the \emph{partial}  vanishing-viscosity  approach of \cite{Crismale-Lazzaroni}, and  
 that of $\BVZZZ$ solution via the \emph{full} vanishing-viscosity analysis in  
   \cite{Crismale-Rossi}.
 In Section~\ref{sez:complete-charact}  we establish Theorem  \ref{thm:delusione}, providing a complete characterization of $\BVZZZ$ solutions. Section~\ref{s:4} is devoted  to the vanishing-hardening analysis of 
 $\BVA{\mu}$ and $\BVB{\mu,\nu}$ solutions to the system with hardening. The proofs of Theorems  \ref{teo:exparBVsol} and \ref{teo:exparBVsolsingle} rely on some technical results collected in the Appendix.


\section{Setup and  preliminaries}
\label{s:2}
Throughout the paper we will use the following 
\begin{notation}[General notation  and preliminaries] 
\label{not:1.1}
\upshape
Let $X$ be a Banach space. By
$\pairing{}{X}{\cdot}{\cdot}$  we denote the duality between $X^*$ and $X$ or between $(X^n)^*$ and $X^n$
(whenever $X$ is a Hilbert space, $\pairing{}{X}{\cdot}{\cdot}$  will be the inner product),  while $\| \cdot \|_{X}$ stands for  the norm in $X$ or in $X^n$.
The inner Euclidean product in $\R^n$, $n\geq 1$, is denoted by $\pairing{}{}{\cdot}{\cdot}$ and the Euclidean norm in $\R^n$  by $|\cdot| $.
The symbol
 $B_r(0)$  stands for the open ball in  $\R^n$ with radius $r$ and center $0$. 
\par
We  write $\| \cdot \|_{L^p}$ for the $L^p$-norm on the space $L^p(O;\R^d)$, with $O$ a measurable subset of $\R^n$ and $1\leq p<+\infty$,  and similarly $\| \cdot \|_{\Hs}$ for the norm of the Sobolev-Slobodeskij space $\Hs(O)$,  for $0\le{\rm m}\in\R$.  The symbol $\Mb(O;\R^d)$ stands for the space of $\R^d$-valued bounded Radon measures in $O$.
\par
The space of symmetric  $(n{\times}n)$-matrices is denoted by $\Mnn$, while the subspace of the deviatoric matrices   with null trace is denoted by $\MD$. One has
$\Mnn = \MD \oplus \R I$, where $I$ is the identity matrix, i.e.\ any $\eta \in \Mnn$ can be decomposed as 
$
\eta = \eta_\dev+ \frac{\mathrm{tr}(\eta)}n I
$,
where $\eta_\dev$ is the orthogonal projection of $\eta$ onto $\MD$. The latter is called the \emph{deviatoric part}  of $\eta$. 
The symbol $\Sym(\MD;\MD)$ stands for the set of symmetric endomorphisms on $\MD$. 
 \par
Given a function $v:\Omega\times (0,T)\to\R$ differentiable,  w.r.t.\ time a.e.\ on $\Omega\times (0,T)$, its (almost everywhere defined) partial time derivative is indicated by $\dot{v}:\Omega\times (0,T)\to\R$. 
A different notation will be employed when considering $v$ as a (Bochner) function,  from $(0,T)$ with values in a Lebesgue or Sobolev space $X$  (with the Radon-Nikod\'ym property):
if $v\in\AC ([0,T];X)$,  then its  (almost everywhere defined)  time derivative is indicated by $v':(0,T) \to X$. 
\par
The symbols $c,\,c',\, C,\,C'$ will denote positive constants whose precise value may vary from line to line (or within the same   line).
We will sometimes employ the symbols $I_i$,  $i = 0, 1,... $,
 as place-holders for  terms appearing in inequalities: also in this case, such symbols may appear in different proofs with different meaning.   
\end{notation}
\subsubsection*{Function of bounded deformation}
The state space for the displacement variable for the systems with hardening will be
\[
H_{\Dir}^1(\Omega;\R^n): = \{ u \in H^1(\Omega;\R^n)\, : \ u =0 \text{ on } \Gamma_\Dir\}
\]
(recall that $ \Gamma_\Dir$ is the Dirichlet part of $\partial\Omega$, cf.\ \eqref{Omega-s2} ahead). 

For the perfectly plastic damage system,  displacements will   belong to the space of   \emph{functions of bounded deformations},   defined by
\begin{equation*}
\BD(\Omega): = \{ u \in L^1(\Omega;\R^n)\, : \ \sig{u} \in  \Mb(\Omega;\Mnn)  \},
\end{equation*}
 with  $\Mb(\Omega;\Mnn) $    the space of  $\Mnn$-valued bounded Radon measures on $\Omega$.
We recall that 
 $\Mb(\Omega;\Mnn) $ can be identified with the dual of the space $\mathrm{C}^0_0(\Omega;\Mnn )$   of continuous $\Mnn$-valued functions vanishing at the boundary of $\Omega$.  
The space $\BD(\Omega)$ has a Banach structure if equipped with the norm 
\[
\| u \|_{\BD(\Omega)}: = \| u \|_{L^1(\Omega;\R^n)}+ \| \sig{u}\|_{\Mb(\Omega;\Mnn) }.
\]
Indeed,  $\BD(\Omega)$ is the dual of a normed space, cf.\ \cite{Temam-Strang80}, and 
 such duality provides a  weak$^*$ convergence on $\BD(\Omega)$: 
a sequence $(u_k)_k $ converges to $u$  weakly$^*$   in $\BD(\Omega)$ if $u_k\weakto u$ in $L^1(\Omega;\R^n)$ and $\sig{u_k}\weaksto \sig{u}$ in $ \Mb(\Omega;\Mnn)$.  
 It holds $\BD(\Omega)\subset L^{n/(n{-}1)}(\Omega; \R^n)$.  
Up to subsequences, every bounded sequence in $\BD(\Omega)$ converges weakly$^*$, weakly in $L^{n/(n{-}1)}(\Omega;\R^n)$, and strongly in $L^{p}(\Omega;\R^n)$  for any $1\leq p <\frac n{n-1}$. 
Finally, we recall that the trace $u|_{\partial\Omega}$ of a function $u \in \BD(\Omega)$  is well defined and is an element in $L^1(\partial\Omega;\R^n)$. 
\subsubsection*{A divergence operator}
First of all, we observe that  
 any $\sigma \in  \Lnn $ such that $\mathrm{div}\, \sigma  \in L^2(\Omega;\R^n) $ induces
 the distribution $[\sigma \rmn]$  defined by 
\begin{equation}\label{2809192054}
\langle [\sigma \rmn] , \psi \rangle_{\partial\Omega}:= \langle \mathrm{div}\,\sigma , \psi \rangle_{L^2} + \langle \sigma , \rmE(\psi) \rangle_{L^2}  \qquad \text{for every }  \psi \in H^1(\Omega;\R^n).
\end{equation}
 By \cite[Theorem~1.2]{Kohn-Temam83} and \cite[(2.24)]{DMDSMo06QEPL} we have that $[\sigma \rmn] \in H^{-1/2}(\partial \Omega; \R^n)$; 
moreover, if $\sigma \in \mathrm{C}^0(\overline{\Omega};\Mnn)$, then  the distribution $[\sigma \rmn]$ fulfills $[\sigma \rmn] = \sigma \rmn$,
where the right-hand side is 
the standard pointwise product of the matrix $\sigma$ and the normal vector $\rmn$ in $\partial \Omega$. 
\par
 For the treatment of the perfectly plastic system for damage it will be crucial to   work with the space 
\begin{equation}
\label{space-sigma-omega}
\Sigma(\Omega):= \{ \sigma \in \Lnn \colon  \mathrm{div}\,\sigma  \in L^n(\Omega;\R^n) , \ \sigma_\dev \in L^{\infty}(\Omega;\MD) \}. 
\end{equation} 
Furthermore, 
our choice of external loadings
(see \eqref{forces-u}) will ensure that 
the stress fields $\sigma$  that we consider, at equilibrium, have the additional property that $[\sigma \rmn] \in L^\infty(\Omega;\R^n)$ and $\sigma \in \Sigma(\Omega)$ (cf.\ Lemma~\ref{le:0103222042}). Therefore,  any of such fields
induces a functional $-\Diver\,\sigma \in \BD(\Omega)^*$ via 
\begin{equation}\label{2307191723}
\langle -\Diver\,\sigma, v \rangle_{\BD(\Omega)} :=\langle - \mathrm{div}\,\sigma,   v \rangle_{L^{\frac{n}{n-1}}(\Omega; \R^n )}  + \langle [\sigma \rmn], v \rangle_ {L^{1}(\Gneu; \R^n)}  \qquad \text{for all } v \in \BD(\Omega). 
\end{equation}
With slight abuse of notation, we shall denote by $ -\Diver\,\sigma$ also the restriction of the above functional to $H^1(\Omega;\R^n)^*$. 
%
\subsubsection*{The $\As$-Laplacian}
The damage flow rule features 
   a gradient regularizing contribution
   in terms of the $\As$-Laplacian operator, that is defined from  the bilinear form
\begin{align*}
\label{bilinear-s_a}
\ass : \Hs(\Omega) \times\Hs(\Omega) \to \R, \quad 
\ass(z_1,z_2): =
 \int_\Omega \int_\Omega\frac{\big(\nabla z_1(x) - \nabla
   z_1(y)\big)\cdot \big(\nabla z_2(x) - \nabla
   z_2(y)\big)}{|x-y|^{n  + 2 (\mathrm{m} - 1)}}\dd  x \dd y \text{ with } \mathrm{m} > \frac n2.
\end{align*}
Then, 
\begin{equation*}
\label{m-Laplacian}
\As: \Hs(\Omega) \to \Hs(\Omega)^* \text{ is defined by } 
 \langle \As(z), w \rangle_{\Hs(\Omega)}  := \ass(z,w) \quad \text{for
every $z,\,w \in \Hs(\Omega)$.}
\end{equation*}
 The inner product $\pairing{}{\Hs(\Omega)}{z_1}{z_2}: = \int_\Omega z_1 z_2 \dd x + \ass(z_1,z_2)$ makes $\Hs(\Omega)$ a Hilbert space. 
Throughout the paper we shall assume $\mathrm{m}>\tfrac n2$ and rely on  the compact embedding $\Hs(\Omega)\Subset \mathrm{C}^0(\overline\Omega)$. 
\subsection{Assumptions and preliminary results}
\label{ss:2.1}
This section and Sec.\ \ref{ss:2.2}  collect all our assumptions on the constitutive functions of the model and on the problem data. We will omit to invoke them explicitly in the statement of the various results. 
\subsubsection*{The reference configuration}
In what follows we will assume that $\Omega \subset \R^n$,  $n\in \{2,3\}$,   is a  bounded  Lipschitz  domain
 satisfying 
 the so-called \emph{Kohn-Temam condition} 
\begin{equation}
\label{Omega-s2}
\tag{2.$\Omega$}
\begin{gathered}
\partial \Omega = \Gdir \cup
\Gneu \cup \Sigma \quad \text{ with $\Gdir, \,\Gneu, \, \Sigma$ pairwise disjoint,}
\\
\text{
 $\Gdir$ and $\Gneu$ relatively open in $\partial\Omega$, and $ \partial\Gdir = \partial \Gneu = \Sigma$ their relative boundary in $\partial\Omega$,}
  \\
  \text{ with $\Sigma$ of class $\mathrm{C}^2$ and $\calH^{n-1}(\Sigma)=0$, and with $\partial\Omega$ Lipschitz and of class $\mathrm{C}^2$ in a neighborhood of $\Sigma$.}
  \end{gathered}
  \end{equation} 
\subsubsection*{The elasticity and viscosity tensors}
We assume that the elastic tensor $\C : [0,+\infty) \to   \Lin(\Mnn;\Mnn)  $
 fulfills  the following conditions
\begin{align}
&
\tag{$2.\mathbb{C}_1$}
 \C \in \mathrm{C}^{1,1}([0,+\infty);  \Lin(\Mnn;\Mnn)),
 \label{spd}
\\
&
\tag{$2.\mathbb{C}_2$}
 z  \mapsto \C(z) \xi : \xi \ \text{is nondecreasing for every}\ \xi \in \Mnn,   \label{C3} \\ 
&
\tag{$2.\mathbb{C}_3$}
\exists\, \gamma_1,\, \gamma_2>0  \ \ \forall\, z \in [0,+\infty)  \ \forall\, \xi \in \Mnn \, : \quad  \gamma_1 |\xi |^2  \leq \C(z) \xi : \xi \leq \gamma_2 |\xi |^2  . \label{C2}
\end{align}
For the  viscosity tensor $\mathbb{D}$ we require that 
\begin{align}
\label{visc-tensors-1} 
\tag{$2.\bbD_1$}
&
  \bbD
  \in \mathrm{C}^0(\overline{\Omega};  \Sym(\MD;\MD)),  \text{ and } 
  \\
&
\label{visc-tensors-2} 
\tag{$2.\bbD_2$}
 \ \exists\,  \delta_1,\, \delta_2>0  \  \forall x \in \Omega \   \ \forall A \in \mt_\sym^{d\times d} \, : \quad \ \delta_1 |A|^2 \leq  \bbD(x) A : A \leq  \delta_2 |A|^2 .
  \end{align}
Thus, $\bbD$ induces an equivalent (by a Korn-Poincar\'e-type inequality)
   Hilbert norm on $H_\Dir^1(\Omega;\R^n)$, i.e.\
\begin{equation}
\label{norma-equivalente}
\| u \|_{H^1, \bbD} : = \left(  \int_{\Omega} \bbD \sig{u} : \sig{u} \dd x \right)^{1/2} \quad \text{ and } \quad
 \exists\, K_{\bbD} >0 \ \forall\,  u \in  H_\Dir^1(\Omega;\R^n) : \ 
 \| u \|_{H^1, \bbD} \leq K_{\bbD} \| \sig{u}\|_{L^2}\,.
\end{equation}
The related 
 `dual norm' is
\begin{equation}
\label{norma-duale}
\| \eta \|_{(H^1, \bbD)^*}: = \left( \int_\Omega \bbD^{-1} \xi : \xi \right)^{1/2}  \quad\text{for all } \eta \in H_\Dir^1(\Omega;\R^n)^* \text{ with } \eta = \mathrm{Div}\,\xi  \text{ for some } \xi \in \widetilde{\Sigma}(\Omega). 
\end{equation} 
  \subsubsection*{The potential energy for the damage variable} In addition to  the regularizing, nonlocal gradient contribution featuring the bilinear form $\ass$, the $z$-dependent part of the mechanical energy functional shall feature a further term with density $W$ satisfying 
\begin{align}
&
\tag{$2.W_1$}
W\in \rmC^{2}((0,+\infty); \R^+)\cap  \rmC^0([0,+\infty); \R^+{\cup}\{+\infty\}),\label{D1}\\
&
\tag{$2.W_2$}
s^{2n} W(s)\rightarrow +\infty \text{ as }s\rightarrow 0^+, \label{D2}
\end{align}
where $W\in \rmC^0([0,+\infty); \R^+\cup\{+\infty\})$ means that  $W(0)=\infty$ and
$W(z)\to +\infty$ if $z\down 0$, in accordance with 
\eqref{D2}. 

Indeed,  the energy contribution involving  $W$ forces  $z$ to be strictly  positive;  consequently, the material never reaches the most damaged state at any point.
\subsubsection*{The plastic and the damage dissipation densities}
The plastic dissipation  potential shall reflect the  requirement that the admissible stresses belong to given constraint sets which,   in turn, 
depend on the damage variable $z$.
More precisely, 
as in 
\cite{Crismale-Lazzaroni} we  ask  that the constraint sets $(K(z))_{z\in [0,+\infty)}$   fulfill
\begin{align}
&
\label{Ksets-1}
\tag{$2.K_1$}
K(z) \subset \MD \text{ is closed and convex for all }  z\in [0,+\infty),
\\
&
\label{Ksets-2}
\tag{$2.K_2$}
\exists\,  0<\bar{r} <  \bar{R} \quad   \forall\, 0\leq z_1\leq z_2  \, : \qquad B_{\bar r} (0) \subset K(z_1)\subset K(z_2) \subset   B_{\bar{R}} (0),  
\\
&
\label{Ksets-3}
\tag{$2.K_3$}
\exists\, C_K>0 \quad \forall\, z_1,\, z_2 \in   [0,+\infty)\, :   \qquad d_{\mathscr{H}} (K(z_1),K(z_2)) \leq C_K  |z_1{-}z_2|, 
\end{align}
with 
$d_{\mathscr{H}}$ the Hausdorff distance between two subsets of $\MD$, defined by
\[
 d_{\mathscr{H}} (K_1,K_2): =  \max \left( \sup_{x\in K_1} \mathrm{dist}(x,K_2), \, \sup_{x\in K_2} \mathrm{dist}(x,K_1) \right).
\]
The associated  support function  $H:[0,+\infty)  \times  \MD \to [0,+\infty)$,  defined by
\begin{equation}\label{0307191050}
H(z,\pi): = \sup_{\sigma \in K(z)} \sigma : \pi \qquad \text{for all } (z,\pi) \in [0,+\infty)\times  \MD ,
\end{equation}
will act as density function for the plastic dissipation potential, cf.\ \eqref{dam-diss-pot} later on.
\par
  We choose as  damage dissipation density the function $ \mathrm{R} : \R \to [0,+\infty] $ given by 
 \begin{equation}
 \label{damage-dissipation-density}
 \mathrm{R}(\zeta): = \begin{cases}
 -\kappa \zeta & \text{if } \zeta \leq 0,
 \\
 +\infty & \text{otherwise,}
 \end{cases}
 \end{equation}
with $\kappa>0$ a constant related to the toughness of the material.
\subsubsection*{Body and surface forces, Dirichlet loading, and initial data}
We assume that   the volume force $f$ and the assigned traction $g$ fulfill  
 \begin{subequations} \label{hyp-data}
  \begin{equation}
\label{forces-u}
f \in H^1(0,T;  L^n(\Omega;\R^n) ) , \quad g \in H^1(0,T; L^\infty(\Gneu;\R^n)).
\end{equation}
 The induced  total load is the function 
\[
F\colon [0,T] \to  \BD(\Omega)^*, \qquad 
\langle F(t), v \rangle_{\BD(\Omega)}: = \langle f(t), v\rangle_{ L^{n/(n{-}1)}(\Omega;\R^n)} + \langle g (t), v \rangle_{L^1(\Gamma_\Neu;\R^n)}
\quad \text{for all } v \in \BD(\Omega).
\]
Furthermore, as customary for perfect plasticity, 
 we  shall impose  a \emph{uniform safe load condition}, namely that  there exists 
\begin{equation}\label{2909191106}
\rho \in H^1(0,T; \Lnn) \quad\text{ with }\quad\rho_\dev \in H^1 (0,T;\Linftyn)
\end{equation} and there exists $\alpha>0$ such that  for every $t\in[0,T]$  (recall \eqref{2809192054})
\begin{equation}\label{2809192200}
-\mathrm{div}\,\varrho(t)=f(t) \text{ a.e.\ on }\Omega,\quad\qquad [\varrho(t) \rmn]= g(t) \text{ on } \Gneu,
\end{equation}
\begin{equation}
\label{safe-load}
\rho_\dev(t,x) +\xi \in K  \qquad \text{for a.a.}\ x\in\Omega \ \text{and for every}\ \xi\in\Mnn \ \text{s.t.}\ |\xi|\le\alpha.
\end{equation} 
Observe that, combining \eqref{forces-u} with     \eqref{2909191106}--\eqref{safe-load}  yields
$-\Diver\,\varrho(t)=F(t)$ for all $t \in [0,T]$. 
\par
As for the time-dependent Dirichlet boundary condition  $w$, we  assume  that
\begin{equation}
\label{dir-load} 
 w \in H^1(0,T;H^1(\R^n;\R^n)).  
\end{equation}
Finally,  we shall consider initial data  $q_0=(u_0, z_0, p_0)$ with 
\begin{equation}
\label{init-data} 
u_0 \in  H_\Dir^1(\Omega;\R^n),  \qquad z_0 \in \Hs(\Omega) \text{ with } W(z_0) \in L^1(\Omega) \text{ and } z_0 \leq 1 \text{ in } \ol\Omega,  \qquad p_0 \in L^2(\Omega;\MD). 
\end{equation}
\end{subequations}
\subsubsection*{The stress-strain duality} For the treatment of the perfectly plastic damage system it is essential to resort to
 a suitable notion of stress-strain duality that we borrow from    \cite{Kohn-Temam83,DMDSMo06QEPL}, also relying on  the more recent extension to Lipschitz boundaries from  \cite{FraGia2012},
to which we refer for the properties mentioned below.
 Following \cite{DMDSMo06QEPL} we introduce 
the class  $A(w)$ of 
\emph{admissible displacements and strains}
 associated with a function   $w \in H^1(\R^n; \R^n)$, 
  that is
\begin{equation*}
\begin{split}
A(w):=\{(u,e,p) \in  \, & \BD(\Omega) \times L^2(\Omega;\Mnn) \times  \MbD  \colon \\
& \rmE(u)   =e+p \text{ in }\Omega,\, p=(w-u){\,\odot\,}\rmn\,\hn \text{ on }  \Gdir \},
\end{split}
\end{equation*}
 where $ \mathrm{n} $ denotes the normal vector to $\partial \Omega$ and $\odot$ the symmetrized tensorial product. 
 The \emph{space of admissible plastic strains} is 
\begin{equation*}
\begin{split}
\Pi(\Omega) := \{p\in  \MbD  \colon  \exists \, (u, w,e)\in \BD(\Omega)\times   H^1(\R^n;\R^n)  \times L^2(\Omega;\Mnn)\,
 \text{ s.t.}\, (u,e,p)\in A(w) \} .
\end{split}
\end{equation*}
 Given $\sigma \in \Sigma(\Omega)$ (cf.\ \eqref{space-sigma-omega}), $p \in \Pi(\Omega)$, and $u,\,e$ such that $(u,e,p)\in A(w)$ we define 
\begin{equation}\label{sD}
\langle [\sigma_\dev:p], \varphi\rangle:=-\int_\Omega \varphi\sigma\cdot (e{-} \rmE(w)  )\,\mathrm{d}x-\int_\Omega\sigma\cdot[(u{-}w)\odot \nabla \varphi]\,\mathrm{d}x -\int_\Omega \varphi \, ( \mathrm{div}\,\sigma)  \cdot (u{-}w)\,\mathrm{d}x
\end{equation}
for every $\varphi \in \mathrm{C}^\infty_c(\R^n)$;
 in fact, this definition is independent of $u$ and $e$. 
Under these assumptions, 
$\sigma \in L^r(\Omega;\Mnn)$ for every $r < \infty$,
and $[\sigma_\dev:p]$ is a bounded Radon measure  with 
$\|[\sigma_\dev:p]\|_1\leq \|\sigma_\dev\|_{ L^\infty}\|p\|_1$ in $\R^n$.
Restricting such  measure to $ \Omega \cup \Gdir $, we  set 
\begin{equation}
\label{duality-product}
 \langle \sigma_\dev\, |\,p\rangle:=[\sigma_\dev:p]( \Omega \cup \Gdir ).
 \end{equation}
  By \eqref{Omega-s2} and \eqref{sD}, since 
$u\in\BD(\Omega) \subset L^{\frac{n}{n-1}}(\Omega;\R^n)$, 
we get the following integration by parts formula, valid if the distribution $[\sigma \rmn]$ defined in \eqref{2809192054} belongs to
$\in L^\infty(\Gneu;\R^n)$: 
\begin{equation*}
\langle \sigma_\dev\,|\,p\rangle=-\langle\sigma, e- \rmE(w)  \rangle_{\Lnn}+  \langle -\diver\,\sigma, u-w \rangle_{L^{\frac{n}{n-1}}(\Omega; \R^n )}  + \langle [\sigma \rmn], u-w \rangle_{L^{1}(\Gneu; \R^n)}   
\end{equation*}
for every  $\sigma \in \Sigma(\Omega)$ and $(u,e,p)\in A(w)$. 

\subsection{Energetics}
\label{ss:2.2}
A key ingredient for the construction of $\BV$ solutions to the rate-independent systems \eqref{RIS-intro}
 (damage with perfect plasticity)  and \eqref{RIS-hard-intro} (damage and plasticity with hardening) is the observation that
 their 
  rate-dependent regularizations    \eqref{viscous-CL-intro}  and \eqref{RD-intro}
  have a \emph{gradient-system structure}.
 Namely, they  can be reformulated in terms of the generalized gradient flow
 \begin{equation*}
 \label{GS}
 \partial \Psi(q'(t)) + \rmD \calE (t,q(t)) \ni 0 \qquad \text{in } \bfQ^*, \ \aein\, (0,T),
 \end{equation*}
 for  suitable choices of 
 \begin{compactitem}
 \item[-] the state space $\bfQ$ for the triple $q= (u,z,p)$;
 \item[-] the driving energy functional $\calE: (0,T) \times \bfQ \to \R\cup \{+\infty\}$;
 \item[-] the dissipation potential $\Psi: \bfQ \to [0,+\infty]$, with convex analysis subdifferential $\partial\Psi: \bfQ \rightrightarrows \bfQ^*$,
 \end{compactitem}
 as 
  rigorously proved  in \cite{Crismale-Rossi}.
  Observe that also the rate-independent systems \eqref{RIS-intro}
   and \eqref{RIS-hard-intro} have a gradient structure that is, however, only formal
 due to the fact that the functions $u$, $z$, and $p$ may have jumps as functions of time.
 Nonetheless, 
 for our analysis
 it is crucial to 
 detail the energetics underlying both the rate-dependent and the rate-independent systems.
 \subsubsection*{The state spaces}
 The state space for the rate-dependent/independent damage systems with hardening is 
 \begin{equation*}
 \Qha :=  H^1_{\Dir} (\Omega;\R^n)\times \Hs(\Omega) \times L^2(\Omega;\MD).
 \end{equation*}
 For the rate-independent damage system with perfect plasticity, 
 the displacements are just functions of bounded deformation and the plastic strains are only bounded Radon measures on $\Omega \cup \Gamma_\Dir$, so that 
 the associated state space is  
   \begin{equation}
   \label{defQpp}
   \begin{aligned}
   \Qpp: =\{ q=(u,z,p) \in &\,\BD(\Omega)\times \Hs(\Omega) \times \MbD\, : \\& \, e: = \sig{u} -p \in \Lnn, 
   \ u \odot \mathrm{n} \,\hn + p =0 \text{ on }  \Gdir  \} .
   \end{aligned}
   \end{equation}
Observe that in the definition of $\Qpp$ it is in fact required that $(u,e,p) \in A(0)$: indeed,  
the condition $ u \odot \mathrm{n} \, \mathscr{H}^{n-1} + p =0 $  is a relaxation of  the homogeneous Dirichlet condition 
   $u=0 $ on  $\Gdir$. 
\subsubsection*{The  energy functionals} The  energy functional  governing  the rate-dependent and rate-independent systems  \emph{with hardening}  
\eqref{RD-intro}  and \eqref{RIS-hard-intro}, respectively, consists of 
\begin{enumerate}
\item  a contribution featuring the elastic energy 
\[
\calQ (z,e) = \int_\Omega \tfrac12 \bbC(z) e: e \dd x;
\]
\item the potential energy for the damage variable and  for  the hardening term;
\item  the time-dependent volume and surface forces.
\end{enumerate}
 Namely,  for $\mu>0$ given,
 $\enen{\mu} \colon [0,T] \times \Qha \to \R\cup \{ +\infty\}   $ is  defined   by 
  \begin{equation*}
 \begin{aligned}
\enen{\mu} (t,u,z,p): =  \calQ( z,\sig{u{+}w(t)} {-}p)   + \int_\Omega \left(  W(z) {+} \frac{\mu}{2} |p|^2 \right) \,\mathrm{d}x 
+ \frac12 \ass(z,z) - \langle F(t), u+w(t) \rangle_{H^1(\Omega;\R^n)} . 
\end{aligned}
\end{equation*}
In what follows, we will use the short-hand notation
\begin{equation*}
e(t): = \sig{u{+}w(t)} {-}p
\end{equation*}
for the elastic part of the strain tensor. 
\par
The energy functional for the  rate-independent perfectly plastic damage system \eqref{RIS-intro} is 
   $\calE_0: [0,T]\times \Qpp \to  \R{\cup}\{+\infty\}$  given by  
\begin{equation}
\label{ene-plastic}
\begin{aligned}
\enen{0} (t,u,z,p)   :=  \calQ(z,e(t)) + \int_\Omega   W(z) \,\mathrm{d}x + \frac12 \ass(z,z)  - \langle F(t), u+w(t) \rangle_{\BD(\Omega)} .
\end{aligned}
\end{equation}
Observe that 
\begin{equation}
\label{domE0}
\mathrm{dom}(\calE_0) = [0,T]\times \mathrm{D} \qquad\text{with } \mathrm{D}= \{ q \in \Qpp\,: \ W(z)\in L^1(\Omega)\}.
\end{equation}
\subsubsection*{The dissipation potentials}
 The dissipation density $\mathrm{R}$ from \eqref{damage-dissipation-density} clearly  induces an integral functional
 $\calR:L^1(\Omega) \to [0,+\infty]$. However, 
   since the  damage flow rule  will be posed in $\Hs(\Omega)^*$  (cf.\ \eqref{1509172300} ahead),
 we will restrict  $\calR$ to the space $\Hs(\Omega)$,
denoting the restriction  by the same symbol. Hence,  we will work work with the functional
 \begin{equation}
 \label{dam-diss-pot}
 \calR: \Hs(\Omega) \to [0,+\infty], \quad \calR(\zeta) = \int_\Omega \mathrm{R}(\zeta(x)) \dd x
\end{equation}
and with its convex analysis subdifferential $\partial \calR : \Hs(\Omega) \rightrightarrows \Hs(\Omega)^*$. 
 We will often use the following characterization of $\partial \calR$, due to the $1$-homogeneity of the potential $\calR$:
\begin{equation}
\label{charact-calR}
\chi \in \partial \calR(\zeta) \quad \Leftrightarrow \quad \begin{cases} \calR(\tilde \zeta) \geq \pairing{}{\Hs}{\chi}{\tilde \zeta}  \text{ for all }  \tilde \zeta \in \Hs(\Omega)
\\
\calR(\zeta) \leq \pairing{}{\Hs}{\chi}{\zeta} 
\end{cases}
\,.
\end{equation}

 \par
 For the systems with hardening (i.e.\ \eqref{RD-intro} and \eqref{RIS-hard-intro}), the 
  \emph{plastic dissipation  potential} $\mathcal{H} \colon \rmC^0(\overline{\Omega}; [0,+\infty))  \times  L^1(\Omega;\MD) 
\to \mathbb{R}$ is defined by
\begin{equation}
\label{plast-diss-pot-visc}
\mathcal{H}(z,\pi):= \int_{ \Omega} H(z(x),\pi(x))\dd x ,
\end{equation}  
where   $H$ is given by \eqref{0307191050}
and $\pi$ is a place-holder for the plastic rate $\dot p$. 
Its convex analysis subdifferential 
$\partial_\pi \calH \colon \rmC^0(\overline{\Omega}; [0,+\infty)) \times L^1(\Omega;\MD)\rightrightarrows L^\infty(\Omega;\MD)$, w.r.t.\ the  second variable $\pi$, given  by
\[
\omega \in \partial_\pi \calH(z, \pi) \quad \text{ if and only if } \quad  \calH(z,\varrho)- \calH(z,\pi) \geq \int_\Omega \omega (\varrho - \pi) \dd x  \quad \text{for all } \varrho \in L^1(\Omega;\MD),
\]
fulfills
\begin{equation}
\label{subdiff-calH}
\omega \in \partial_\pi \calH(z, \pi) \quad \text{ if and only if } \quad  \omega(x)\in \partial_\pi H(z(x),\pi(x)) \quad \foraa\, x \in \Omega.
\end{equation} 
 A characterization analogous to \eqref{subdiff-calH} holds for  the subdifferential
$\partial_\pi \calH(z,\cdot) \colon  L^1(\Omega;\MD)\rightrightarrows L^\infty(\Omega;\MD)$. 
\par
In order to handle the perfectly plastic system for damage, 
 the plastic dissipation potential $\mathcal{H}(z,\cdot)$ has to be extended to the space
   $\MbD$. 
 We define
 $\Hpp \colon \rmC^0(\overline{\Omega}; [0,1]) \times  \MbD  \to \mathbb{R}$ by
  \begin{equation*}
\Hpp(z,\pi):= \int_{ \Omega \cup \Gdir} H\biggl(z(x),\frac{\mathrm{d}\pi}{\mathrm{d}\mu}(x) \biggr)\,\mathrm{d}\mu(x),
\end{equation*}  
 where  $\mu \in \MbD$ is a positive  measure such that $ \pi \ll \mu $ and $\frac{\mathrm{d} \pi}{\mathrm{d}\mu}$ is the Radon-Nikod\'ym derivative of $p$ with respect to $\mu$; by one-homogeneity of \ $H(z(x),\cdot)$, the definition of  $\Hpp$  does not depend  of $\mu$.
For the theory of convex functions of measures  we refer to \cite{GofSer}. 
By \cite[Proposition~2.37]{AmFuPa05FBVF},    for every   $z \in \rmC^0(\overline{\Omega};[0,1])$  the functional 
 $p \mapsto \Hpp(z,p)$ 
is convex and positively one-homogeneous.
 We recall that by Reshetnyak's lower semicontinuity theorem, if  $(z_k)_k\subset\rmC^0(\overline \Omega;[0,1])$ and $(\pi_k)_k\subset\MbD $ are  such that $z_k \rightarrow z$ in $\rmC^0(\overline\Omega)$ and $\pi_k \rightharpoonup \pi$ weakly$^*$ in $\MbD$, then
 \begin{equation*}
 \Hpp(z,\pi) \leq \liminf_{k \rightarrow +\infty} \Hpp(z_k, \pi_k). \label{Hsci}
 \end{equation*}
 Finally, 
 from \cite[Proposition~3.9]{FraGia2012} it follows that for every $\sigma \in \mathcal{K}_z(\Omega)$ 
\begin{equation}\label{Prop3.9}
H\biggl(z, \frac{\mathrm{d}p}{\mathrm{d} |p|}\biggr)|p| \geq [\sigma_\dev:p] \quad \text{as measures on }  \Omega \cup \Gdir  .
\end{equation}
 In particular, we have 
\begin{equation}\label{eq:carH}
\Hpp(z, p)\geq  \sup_{\sigma  \in  \mathcal{K}_z(\Omega)}  \langle \sigma_\dev\, |\, p\rangle \,\qquad \text{for every $p \in \Pi(\Omega)$.} 
\end{equation}
\par
The (partially) viscously regularized system \eqref{viscous-CL-intro} also features the 
$2$-homogeneous dissipation potential
\[
\calR_2(\zeta): = \int_\Omega \tfrac12 |\zeta(x)|^2 \dd x,
\]
while the (fully) viscously regularized system \eqref{RD-intro} additionally involves the quadratic potentials
\[
\calV_{2,\nu}(v) : = \int_\Omega \tfrac{\nu}2 \bbD \sig v : \sig v \dd x, \qquad \calH_{2,\nu} (\pi) = \int_\Omega \tfrac{\nu}2 |\pi|^2 \dd x .
\]
\subsubsection*{The gradient structure for system \eqref{RD-intro}} It was  proved in \cite[Lemma 3.3]{Crismale-Rossi} that for every $t\in [0,T]$ 
the functional $(u,z,p) = q \mapsto \calE_\mu(t,q)$ is Fr\'echet differentiable on its domain $[0,T]\times \mathbf{D} $, with $\mathbf{D}= \{ (u,z,p) \in \Qha\, : \ z>0 \text{ in }
 \overline{\Omega}\}$, and that  for all $q \in \Qha$ the function $t\mapsto \calE_\mu(t,q)$ belongs to  $H^1(0,T)$. Relying on this,  it was shown  that system \eqref{RD-intro} reformulates as the generalized gradient system 
 \begin{equation}
\label{1509172300} 
\partial_{q'} \psie\eps\nu(q(t), q'(t)) + \rmD_q\enen{\mu} (t,q(t)) \ni 0 \qquad \text{in } \Qha^* \quad \foraa\, t \in (0,T),
\end{equation}
involving the overall dissipation potential
\[
  \psin\nu(q, q') 
  : = \disv 2\nu(u') + \calR(z') + \calR_2(z')+ \calH(z,p') + \calH_{2,\nu}(p')  
\]
{and its rescaled version }
\[
   \psie\eps\nu( q, q') : = \frac1{\eps} \psin\nu(q,\eps q')  = \int_{\Omega} \tfrac{\eps\nu}2 \bbD \sig{u'}{:} \sig{u'} \dd x + \calR(z')
   + \int_{\Omega} \tfrac\eps2 |z'|^2 \dd x + \calH(z,p') +  \int_{\Omega} \tfrac{\eps\nu}2 |p'|^2 \dd x .
\]

\section{The partial versus the  full vanishing-viscosity approach}
\label{s:3}
In this section we aim to gain further insight into the notion of  $\BVZZZ$  solution to system \eqref{RIS-intro} arising from the \emph{full} vanishing-viscosity approach of \cite{Crismale-Rossi} and compare it with the  
 $\BVZ$  concept from \cite{Crismale-Lazzaroni}. In order to properly introduce both notions, it can be useful to recall the reparameterized energy-dissipation balance where one passes to the limit  to obtain Balanced Viscosity solutions.
\par
 At this heuristic stage, we will treat the partial vanishing-viscosity approximation of \cite{Crismale-Lazzaroni} and the full approximation of \cite{Crismale-Rossi} in a unified way, although at the level of  the viscous approximation in  \cite{Crismale-Lazzaroni} there is the significant difference that the plastic strain evolves rate-independently. Still, we will disregard this and  instead focus on  the 
similarities between the \emph{rate-dependent} systems \eqref{RD-intro}
 (wherefrom the   $\BVZZZ$   solutions of  \cite{Crismale-Rossi}  stem), 
 and \eqref{viscous-CL-intro}  (wherefrom the  $\BVZ$  solutions of  \cite{Crismale-Lazzaroni}). In fact, 
 \eqref{viscous-CL-intro}
  can be formally obtained from   \cite{Crismale-Rossi} by setting $\nu =\mu=0$. That is why, in what follows
  to fix the main ideas 
    we will illustrate the limit passage in the energy-dissipation balance associated with system \eqref{RD-intro}. 
   \par
 Throughout this section and the remainder of the paper, we will suppose that the assumptions of Sec.\ \ref{s:2} are in force without explicitly invoking them. 
\subsection*{The energy-dissipation balance for the viscous system}
As observed in \cite[Proposition 3.4]{Crismale-Rossi}, 
 a curve $q = (u,z,p)\in H^1(0,T;\Qha)$
 is a solution to  \eqref{RD-intro}, namely to the generalized gradient system 
 \eqref{1509172300}, 
  if and only if  it satifies the energy-dissipation balance
\begin{equation}
\label{endissbal}
\int_0^t \left( \psie\eps\nu(q(r),q'(r)) + \psie\eps\nu^*(q(r),{-} \rmD_q\enen{\mu} (r,q(r))) \right) \dd r +   \enen{\mu} (t,q(t)) = \enen{\mu} (0,q(0))+\int_0^t \partial_t  \enen{\mu} (r,q(r)) \dd r 
\end{equation}
for every $t\in [0,T]$. 
Let us now consider a family $(q_{\eps,\nu}^\mu)$ of solutions to \eqref{1509172300}. In \cite{Crismale-Rossi} suitable
 a priori estimates, uniform w.r.t.\ the parameters $\eps, \nu, \mu>0$, for the length (measured in an appropriate norm) of the curves $q_{\eps,\nu}^\mu$   were derived.  Based on such estimates it is possible to reparameterize such curves, obtaining \emph{parameterized curves} defined on an `artificial' time interval $[0,S]$ 
 \[
 (\sft_{\eps,\nu}^\mu,\sfq_{\eps,\nu}^\mu): [0,S]\to [0,T] \times \Qha \quad \text{with } \sft_{\eps,\nu}^\mu : = (\sfs_{\eps,\nu}^\mu)^{-1}, \quad \sfq_{\eps,\nu}^\mu: = q_{\eps,\nu}^\mu \circ \sft_{\eps,\nu}^\mu
 \]
and $\sfs_{\eps,\nu}^\mu$  (suitable) arclength functions associated with the curves $(q_{\eps,\nu}^\mu)$. 
Now,  in terms of the parameterized curves  $(\sft_{\eps,\nu}^\mu,\sfq_{\eps,\nu}^\mu)$,   \eqref{endissbal} translates into the \emph{reparameterized} energy-dissipation balance 
\begin{equation}
\label{param-endissbal}
 \begin{aligned}
  & \calE (\sft_{\eps,\nu}^\mu(s),\sfq_{\eps,\nu}^\mu(s)) + \int_{0}^{s} 
   \Me{\sft_{\eps,\nu}^\mu(r)}{\sfq_{\eps,\nu}^\mu(r)}{(\sft_{\eps,\nu}^\mu)'(r)}{(\sfq_{\eps,\nu}^\mu)'(r)} \dd r
   \\
    & =  
      \calE (\sft_{\eps,\nu}^\mu(0),\sfq_{\eps,\nu}^\mu(0)) 
    +\int_{0}^{s} \partial_t \calE (\sft_{\eps,\nu}^\mu(r), \sfq_{\eps,\nu}^\mu(r)) \,(\sft_{\eps,\nu}^\mu)'(r) \dd r
    \end{aligned}
\end{equation}
featuring  the functional $\Mename : [0,T]\times \Qha \times (0,+\infty) \times \Qha \to [0,+\infty]$
\begin{align}
\label{Me-def}
&
\Me tq{t'}{q'} 
: = \calR(z') + \calH(z,p') + \Mredname (t,q,t',q')
\intertext{ with the reduced functional } \Mredname \text{ defined by }
&
\nonumber
\Mredname (t,q,t',q')
:=\  \frac{\eps}{2t'} \DVito_\nu(q')^2 + \frac{t'}{2\eps} (\DVitos(t,q))^2,
\end{align}
and 
\begin{equation}
\label{new-form-D-Dvito}
\begin{aligned}
  & 
   \DVito_\nu({q}'): = 
    \sqrt{ \nu \| {u}'(t)\|^2_{ H^1, \bbD }   {+} 
\|{z}'(t)\|_{L^2}^2
{+} \nu\|{p}'(t)\|_{L^2}^2} ,
\\
&
 \DVitos({t},{q}): =   \sqrt{
\frac1{\nu}\, \|{-}\mathrm{D}_u \calE_\mu ({t},{q} )\|^2_{( H^1, \bbD )^*} 
+ \tilded_{L^2} ({-}\mathrm{D}_z \calE_\mu  ({t},{q}) ,\partial\calR(0))^2
+ \frac1\nu \, \dLtwo ({-}\mathrm{D}_p \calE_\mu  ({t},{q}) ,\partial_\pi \calH( {z} ,0))^2
}\,.
\end{aligned}
\end{equation}
In \eqref{new-form-D-Dvito}, $\| \cdot \|_{ H^1, \bbD}  $ and $\| \cdot \|_{( H^1, \bbD )^*} $ are the norms  introduced   in \eqref{norma-equivalente} and \eqref{norma-duale}, while the distance functional $\tilded_{L^2(\Omega)}( \cdot, \partial \calR(0)): \Hs(\Omega)^* \to [0,+\infty]$ is defined by 
\begin{equation}
\label{tilde-d}
\tilded_{L^2(\Omega)}( { \chi}, \partial \calR(0))^2
  :=  \min_{\gamma \in \partial \calR(0)}   \mathfrak{f}_2 ( { \chi}{-}\gamma)
\quad \text{with }  \mathfrak{f}_2(\beta): = \begin{cases}
\|\beta\|_{L^2(\Omega)}^2 & \text{if } \beta \in L^2(\Omega) ,
\\
+\infty & \text{if } \beta \in \Hs(\Omega)^* \setminus L^2(\Omega) .
\end{cases}
\end{equation}
 Clearly, the 
functional $\Mename$ from \eqref{Me-def} encompasses, in the energy-dissipation balance \eqref{param-endissbal}, 
 the competition between viscous dissipation
 and tendency to relax towards equilibrium \& rate-independent behavior. In fact, viscous dissipation is 
 encoded in the term $   \DVito_\nu({q}')$, which is  modulated by the viscosity parameter $\eps$. In turn, the relaxation to rate-independent behavior is encompassed in the term $\DVitos({t},{q})$, modulated by $\tfrac1\eps$.
   
\par 
Now,  the concepts of  $\BVZ$ and $\BVZZZ$ solutions to system  \eqref{RIS-intro} 
are defined in terms of  parameterized energy-dissipation balances akin to \eqref{param-endissbal}. These energy identities involve a (positive) \emph{vanishing-viscosity potential} $\calM $ that is defined on $ [0,T]\times \Qpp \times [0,+\infty) \times \Qpp $
 (recall that $\Qpp$ is the state space for the perfectly plastic damage system, cf.\ \eqref{defQpp}). 
 At least  formally,
$\calM$
  arises as $\Gamma$-limit 
\begin{compactitem}
\item[-]
of the functionals $(\calM_{\eps}^{0,0})_\eps$  as $\eps\down0$,  in the case of the vanishing-viscosity analysis in \cite{Crismale-Lazzaroni} (recall that system \eqref{viscous-CL-intro} is formally a particular case of \eqref{RD-intro}, with $\nu=\mu=0$);
\item[-]
of $(\calM_{\eps}^{\mu,\nu})_{\eps,\nu,\mu}$  as $\eps, \, \nu,\, \mu \down0$,  in the case of the joint  vanishing-viscosity  and hardening analysis in~\cite{Crismale-Rossi}. 
\end{compactitem}
However, in order to \emph{rigorously} define the
vanishing-viscosity contact potentials
 $\calM$ relevant for the two concepts of $\BV$ solutions we will  need to provide some technical preliminaries
  in the following section. 
 \subsection{Preliminary definitions}
 \label{ss:3.prelim}
The
 vanishing-viscosity potentials at the core of the definitions of  $\BVZ$   solutions in  \cite{Crismale-Lazzaroni} and     $\BVZZZ$   solutions in 
 \cite{Crismale-Rossi} have a structure akin to that of the functional $\Mename$ from \eqref{Me-def},  
 but they are tailored to the driving energy $\calE_0$ \eqref{ene-plastic} for the perfectly plastic system.
  Recalling  the definition of  $\calM_{\eps}^{\mu,\nu}$,  it is thus clear that, in order to 
 define such vanishing-viscosity potentials,
 one  needs  suitable surrogates of 
  the   $(\bbD, H^1)^*$-norm   of  $\rmD_u \calE_0(t,q)$, and of the $L^2$-distance  of $\rmD_p \calE_0(t,q)$   from the stable set $\partial_\pi \mathcal{H}(z,0)$, cf.\ \eqref{new-form-D-Dvito}.  Indeed, such quantities 
 are no longer well defined for  the functional $\calE_0$  at every  $(t,q) \in [0,T]\times \Qpp$;  instead, note that 
  the 
 $L^2$-distance, in the sense   of  \eqref{tilde-d}, of    $\rmD_z \calE_0(t,q)$ 
  from the stable set $\partial \calR(0)$ still makes sense. 
Following \cite[Sec.\ 7]{Crismale-Rossi},  we will set at every $(t,q) \in  \mathrm{dom}(\calE_0) =[0,T]\times \mathbf{D}$ (cf.\ \eqref{domE0}) 
\begin{subequations}
\label{surrogates}
 \begin{align}
 \label{surrogate-1}
 &
 \calS_u \calE_0(t, q) 
:= \sup_{\substack{\eta_u \in H_\Dir^1(\Omega;\R^n) \\ \|\eta_u\|_{( H^1, \bbD)}\le 1} } \langle -\mathrm{Div}\,\sigma(t) -F(t), \eta_u \rangle_{ H_\Dir^1(\Omega;\R^n) }
, 
\\
&
 \label{surrogate-2}
\mathcal{W}_p \calE_0(t,q):=\sup_{ \substack{\eta_p \in L^2(\Omega;\MD) \\  \|\eta_p\|_{L^2}\le1} } \left(  \langle \sigma_\dev(t), \eta_p \rangle_{L^2(\Omega;\MD)} -\calH(z,\eta_p)\right) .
\end{align}
\end{subequations}
with  $\sigma(t)= \mathbb{C}(z) e(t) $ and $e(t) =  \rmE(u+w(t))-p$. 
 Observe that  the  expressions  in \eqref{surrogate-1} and \eqref{surrogate-2} are well defined since
$e(t)$ and, a fortiori, $\sigma(t)$ are elements in $L^2(\Omega;\Mnn)$ whenever $(u,z,p) \in \Qpp$. 
Formulae  \eqref{surrogate-1} and  \eqref{surrogate-2}  have been inspired by 
the obvious fact that 
\begin{equation*}
\| \zeta \|_{ (H^1, \bbD)^*} = \sup_{\substack{\eta_u \in H_\Dir^1(\Omega;\R^n) \\ \|\eta_u\|_{( H^1, \bbD)}\le 1} } \langle \zeta, \eta_u \rangle_{ H_\Dir^1(\Omega;\R^n) } \qquad \text{for every } \zeta \in H_\Dir^1(\Omega;\R^n)^*,
\end{equation*}
and by the formula
\begin{equation*}
d_{L^2}(\varsigma, \partial_\pi \mathcal{H}(z,0)) = \sup_{ \substack{\eta \in L^2(\Omega;\MD) \\  \|\eta\|_{L^2}\le1} } \left(  \langle \varsigma, \eta \rangle_{L^2(\Omega;\MD)} -\calH(z,\eta)\right) \qquad \text{for every } \varsigma \in  L^2(\Omega;\MD),
\end{equation*}
which was proved in \cite[Lemma 3.6]{Crismale-Rossi}.
In particular, we note that for every $q\in \mathrm{D}$
\begin{subequations}
\label{true-interpretation}
\begin{align}
 & \mathcal{W}_p \calE_0(t,q) = d_{L^2}(\sigma_\dev(t), \partial_\pi \mathcal{H}(z,0)),
 \intertext{while}
 &
 \calS_u \calE_0(t, q)  =\|  -\mathrm{Div}\,\sigma(t) -F(t) \|_{ (H^1, \bbD)^*}  \qquad \text{if } u \in H_\Dir^1(\Omega;\R^n).
\end{align}
\end{subequations}
For later use, we also record here the following result.
\begin{lemma}\label{le:0103222042}
For every $(t,q)\in [0,T]\times \mathbf{D}$ there holds
\begin{equation}
\label{equivalence}
\begin{gathered}
  \calS_u \calE_0(t, q) =\mathcal{W}_p \calE_0(t,q)=0 
\quad   \Longleftrightarrow 
 \\
 \sigma(t) \in \calK_{z}(\Omega)\,,\quad - \diver\, \sigma(t)=f(t) \text{ a.e.\ in }\Omega\,,\quad [\sigma(t) \rmn] = g(t) \ \hn\text{-a.e.\ on }\Gneu,
\end{gathered}
\end{equation}
with $\sigma(t) = \bbC(z) e(t)$ and $e(t) = \sig{e+w(t)} -p$.
\end{lemma}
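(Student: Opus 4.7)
\medskip

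\noindent\emph{Proof proposal.} The plan is to handle the two conditions $\calS_u \calE_0(t,q)=0$ and $\mathcal{W}_p\calE_0(t,q)=0$ separately, showing that each is equivalent to one piece of the right-hand side of \eqref{equivalence}. I would start with $\mathcal{W}_p\calE_0(t,q)=0$, since it also yields the additional regularity $\sigma_\dev(t)\in L^\infty(\Omega;\MD)$ needed to give meaning to $[\sigma(t)\rmn]$ via \eqref{2307191723} and thus to interpret $\calS_u\calE_0(t,q)=0$ as a genuine Euler--Lagrange identity.

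For the plastic part, note first that the expression in \eqref{surrogate-2} is always well defined (and nonnegative, as $\eta_p=0$ is admissible), because $e(t)\in L^2$ gives $\sigma(t)\in L^2$. Since both $\pi\mapsto \langle \sigma_\dev(t),\pi\rangle_{L^2}$ and $\pi\mapsto\calH(z,\pi)$ are positively $1$-homogeneous, the vanishing of the supremum on the unit ball is equivalent to the inequality
\[
\int_\Omega \sigma_\dev(t)\cdot \eta_p\,\dd x \leq \int_\Omega H(z(x),\eta_p(x))\,\dd x \qquad \text{for every } \eta_p\in L^2(\Omega;\MD).
\]
A standard localization argument (testing with $\eta_p=\chi_A\,\pi$ for arbitrary measurable $A\Subset\Omega$ and $\pi\in\MD$, together with a separability argument to choose a single null set) shows this is equivalent to the pointwise inclusion $\sigma_\dev(t,x)\in K(z(x))$ for a.a.\ $x\in\Omega$, i.e.\ $\sigma(t)\in\calK_{z}(\Omega)$. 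By \eqref{Ksets-2}, this forces $\sigma_\dev(t)\in L^\infty(\Omega;\MD)$ with bound $\bar R$, so $\sigma(t)\in\Sigma(\Omega)$.

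For the displacement part, the functional $-\Diver\sigma(t)-F(t)$ is in particular an element of $H^1_\Dir(\Omega;\R^n)^*$, so the supremum in \eqref{surrogate-1} coincides with its $(H^1,\bbD)^*$--norm. Hence $\calS_u\calE_0(t,q)=0$ is equivalent to
\[
\langle -\Diver\sigma(t)-F(t),\eta_u\rangle_{H^1_\Dir}=0 \qquad \text{for every } \eta_u\in H^1_\Dir(\Omega;\R^n).
\]
Expanding via \eqref{2307191723} and the definition of $F(t)$, this reads
\[
\int_\Omega \bigl(-\diver\sigma(t)-f(t)\bigr)\cdot \eta_u\,\dd x + \int_{\Gneu}\bigl([\sigma(t)\rmn]-g(t)\bigr)\cdot \eta_u\,\dd\hn=0
\]
for all $\eta_u\in H^1_\Dir(\Omega;\R^n)$. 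Testing first with $\eta_u\in C_c^\infty(\Omega;\R^n)$ yields $-\diver\sigma(t)=f(t)$ a.e.\ in $\Omega$; feeding this back and varying $\eta_u$ freely on $\Gneu$ (using that traces of $H^1_\Dir$--functions on $\Gneu$ span a dense subspace of $L^2(\Gneu;\R^n)$, hence separate $L^\infty$) gives $[\sigma(t)\rmn]=g(t)$ on $\Gneu$. The converse implication in both statements is direct: substituting the equilibrium conditions into \eqref{surrogate-1} kills every test, while $\sigma_\dev(t,\cdot)\in K(z(\cdot))$ a.e.\ yields, by definition of $H$, the pointwise bound $\sigma_\dev(t,x)\cdot\pi\leq H(z(x),\pi)$ and hence $\mathcal{W}_p\calE_0(t,q)\leq 0$.

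The main technical point is the localization step that turns the integral inequality into the pointwise inclusion $\sigma_\dev(t,x)\in K(z(x))$; one must be careful to choose the exceptional null set independently of $\pi\in\MD$, which is achieved by restricting to a countable dense subset of $\MD$ and then using closedness of $K(z(x))$ from \eqref{Ksets-1}. All the other steps are essentially duality arguments that exploit the $1$-homogeneity of $\calH(z,\cdot)$ and the definition of $\Diver$.
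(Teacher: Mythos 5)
Your argument is correct, but note that it takes a different (self-contained) route from the paper, whose proof of this lemma is essentially by citation: the implication $\Rightarrow$ is taken from \cite[Lemma 7.4]{Crismale-Rossi}, and the converse is obtained by re-reading that proof together with the convexity arguments of \cite[Prop.~3.5]{DMDSMo06QEPL}. What you do instead is unpack the content of those references: you split the statement into two separate equivalences, treat $\mathcal{W}_p\calE_0(t,q)=0$ by $1$-homogeneity of $\calH(z,\cdot)$, localization with $\eta_p=\chi_A\pi$, a countable dense set of directions and the support-function characterization of the closed, convex, bounded sets $K(z(x))$ (here \eqref{Ksets-1}--\eqref{Ksets-2} give exactly the closedness and the Lipschitz bound on $H(z(x),\cdot)$ that the density step needs), and treat $\calS_u\calE_0(t,q)=0$ as the vanishing of a linear functional on $H^1_{\Dir}(\Omega;\R^n)$, separating interior and boundary tests. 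This buys a proof readable without consulting the cited works, at the price of length; the paper's version buys brevity by leaning on results already established for the hardening system.

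Two small points to tidy up, neither a gap. First, the ordering you impose (doing the plastic part first ``to give meaning to $[\sigma(t)\rmn]$ via \eqref{2307191723}'') is not needed: \eqref{surrogate-1} only uses test functions in $H^1_{\Dir}(\Omega;\R^n)$, so its meaning is $\int_\Omega \sigma(t):\rmE(\eta_u)\dd x-\langle F(t),\eta_u\rangle$ for $\sigma(t)\in L^2$, and $[\sigma(t)\rmn]$ becomes available through \eqref{2809192054} as soon as the compactly supported tests give $-\diver\sigma(t)=f(t)\in L^n(\Omega;\R^n)$; the $L^\infty$ bound on $\sigma_\dev(t)$ is only required for the $\BD$-duality \eqref{2307191723}, which this lemma does not invoke. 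Relatedly, if $\calK_z(\Omega)$ is understood as a subset of $\Sigma(\Omega)$ (so that it also encodes $\diver\sigma(t)\in L^n$), the clean split into two independent equivalences is slightly imprecise, but since the lemma asserts a conjunction on both sides your implications still combine to give exactly \eqref{equivalence}. Second, you should derive $-\diver\sigma(t)=f(t)$ from the interior tests \emph{before} writing the expanded identity containing $\diver\sigma(t)$ and $[\sigma(t)\rmn]$; as written, the expansion presupposes the regularity it is about to establish, although your subsequent testing order shows you have the correct argument in mind.
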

\begin{proof}   The  implication $\Rightarrow $ was proved in \cite[Lemma 7.4]{Crismale-Rossi}. A close perusal of the proof, also mimicking the convexity  arguments
from \cite[Prop.\ 3.5]{DMDSMo06QEPL},  
 also yields the converse implication. 
\end{proof} 
\par
We then set 
\begin{equation}
\label{right-Dstar}
\DVito^*(t,q): = \sqrt{( \calS_u \calE_0(t, q) )^2 +(\mathcal{W}_p \calE_0(t,q))^2} .
\end{equation}
We are now in a position to define the vanishing-viscosity contact potentials involved in the definitions of $\BV$ solutions in \cite{Crismale-Lazzaroni} and \cite{Crismale-Rossi}. We will use the notation $ I_{\{0\}}$ for the indicator function of the singleton $\{0\}$, namely
\[
I_{\{0\}}(\xi) = \begin{cases}
0 & \text{if } \xi =0,
\\
+\infty & \text{otherwise}
\end{cases} \qquad \text{for all } \xi \in \R.
\]
\subsection{$\BVZ$  solutions via the partial vanishing-viscosity approach}
\label{ss:3.1}
The vanishing-viscosity contact potential for the $\BVZ$  solutions from \cite{Crismale-Lazzaroni} is the functional 
\begin{subequations}
\label{M0-CL}
\begin{equation}
\label{M0-CL-1}
\calM_0: [0,T] \times \Qpp \times [0,+\infty) \times \Qpp \to [0,+\infty], \qquad 
\calM_0(t,q,t',q') =  \calR(z') + \Hpp(z,p') + \calM_{0, \mathrm{red}}(t,q,t',q') ,
\end{equation}
where for $q=(u,z,p)$ and $q'=(u',z',p')$ we have 
\begin{equation}
\label{M0-CL-2}
 \calM_{0, \mathrm{red}}(t,q,t',q')  = I_{\{ 0\}} (\DVito^*(t,q)) +  \widetilde{\calM}_{0, \mathrm{red}}(t,q,t',q')  
\end{equation}
and 
\begin{equation}
\label{2906191736}
\begin{aligned}
& 
\text{if } t'>0,  \quad 
 \widetilde{\calM}_{0, \mathrm{red}}(t,q,t',q'):= I_{\{0\}}(\tilded_{L^2} ({-}\rmD_z \calE_0 (t,q),\partial\calR(0))),
\\
&
\text{if } t'=0,  \quad 
 \widetilde{\calM}_{0, \mathrm{red}}(t,q,t',q') :=
 \begin{cases}
\|z'\|_{L^2} \, \tilded_{L^2}  ({-}\mathrm{D}_z \calE_0 (t,q),\partial\calR(0))  & \text{if } \tilded_{L^2}  ({-}\mathrm{D}_z \calE_0 (t,q),\partial\calR(0))<+\infty,
\\
+\infty & \text{otherwise}
\end{cases}\,.
\end{aligned}
\end{equation}
\end{subequations}
Observe that the definition of $\calM_0(t,q,0,q')$
again reflects,  in the jump regime,  the tendencies of the system to  evolve viscously  and to relax towards equilibrium and rate-independent evolution. However, here viscous dissipation only affects the damage variable. Likewise,  rate-independent behavior is solely encompassed by  the relation $\tilded_{L^2} ({-}\mathrm{D}_z \calE_0 (t,q),\partial\calR(0)) =0$. 
\par
Let us now detail the properties of the parameterized curves providing $\BVZ$ solutions. 
\begin{definition}
\label{def:CL16}
We call a parameterized  curve   $(\sft,\sfq)=(\sft, \sfu, \sfz, \sfp)\colon [0,S] \to [0,T] \times \Qpp$ \emph{admissible} if it satisfies 
\begin{subequations}\label{CL-admiss-curve}
\begin{align}
&\text{$\sft \colon [0,S] \to [0,T]$ is nondecreasing,}
\label{CL-adm-def-1}
\\
&(\sft, \sfu, \sfz, \sfp) \in \AC\big([0,S]; [0,T] {\times} \BD(\Omega) {\times} \Hs(\Omega)  {\times} \MbD \big),\label{CL-adm-def-2}\\
&  \sfe=\mathrm{E}(\sfu + w(\sft))-\sfp \in \AC([0,S]; \Lnn),  
    \label{CL-adm-def-3}\\
& 
 \label{CL-adm-def-4}
\sft \text{ is constant in every connected component of }A^\circ = \{ s\in (0,S) \colon \tilded_{L^2} ({-}\mathrm{D}_z \calE_0 (\sft(s),\sfq(s)),\partial\calR(0))  >0\}.
\end{align}
\end{subequations}
We will denote by $\adm 0S 0T{\Qpp}$ the class of admissible parameterized curves from $ [0,S] $ to $[0,T] \times \Qpp$.
\end{definition}
We are now in a position to give the definition of Balanced Viscosity solution in the sense of \cite{Crismale-Lazzaroni}.
\begin{definition}
\label{def:BV-sol-CL}
We call an admissible parameterized   curve  $(\sft,\sfq)=(\sft, \sfu, \sfz, \sfp)\in \adm 0S 0T{\Qpp}$  a 
Balanced Viscosity solution   for the perfectly plastic damage system
\eqref{RIS-intro} in the sense of \cite{Crismale-Lazzaroni}
(a \emph{$\BVZZZ $ solution}, for short), 
if it satisfies 
 the energy-dissipation balance
\begin{equation}
\label{EnDiss-CL}
\calE_0(\sft(s_2),\sfq(s_2)) + \int_{s_1}^{s_2} \calM_0(\sft(r),\sfq(r),\sft'(r),\sfq'(r))
 \dd r =  \calE_0(\sft(s_1),\sfq(s_1)) + \int_{s_1}^{s_2} \partial_t \calE_0(\sft(r), \sfq(r)) \dd r
   \end{equation}
   for every $0 \leq s_1 \leq s_2 \leq S$. 
\end{definition}
%
%
The existence  of $\BVZ$ solutions was proved in  \cite[Thm.\ 5.4]{Crismale-Lazzaroni}  
under the condition that the initial data $q_0=(u_0, z_0, p_0)$ for the perfectly plastic damage system 
\eqref{RIS-intro} fulfill \eqref{init-data} and the additional condition that 
\begin{equation}
\label{EL-initial-CL}
\begin{aligned}
\rmD_q  \enen{0} (0,q_0)  =
 \left(-\mathrm{Div}\, \sigma_0  -F(0), \As(z_0)+W'(z_0)+\tfrac12 \mathbb{C}'(z_0) e_0: e_0, 
\newmu p_0 - (\sigma_0)_\dev \right) \in L^2(\Omega;\R^n{\times} \R {\times} \MD) .
\end{aligned}
\end{equation}

\subsection{A differential characterization for  $\BVZ$ solutions}
We  now 
aim to provide a \emph{differential characterization}  for the  notion of $\BV$ solution 
 from Definition \ref{def:CL16},  in terms of a suitable system of 
 subdifferential inclusions for the displacement variable (which in fact shall satisfy the elastic equilibrium equation), the damage variable, and the plastic strain. In order to properly formulate the flow rule governing the latter, we need   the following result;   the proof of  one implication can be  found \cite{Crismale-Lazzaroni}, in turn based on arguments from \cite{DMDSMo06QEPL}. 
 \begin{lemma}
 \label{l:plastic-flow-rule}
 Let an admissible parameterized curve  $(\sft,\sfq)=(\sft, \sfu, \sfe, \sfp) \in \adm 0S0T{\Qpp}$ satisfy 
 \begin{equation}
 \label{mHDP}
   \Hpp(\sfz(s),\sfp'(s)) =   \langle \serifsigma_\dev(s) \, |\, \sfp'(s) \rangle  \qquad \foraa\, s \in (0,S)
 \end{equation}
 with  $ \serifsigma(s) = \bbC(\sfz(s)) \sfe(s)$, $\sfe(s) = e (\sft(s))$, and $\langle \cdot  \, |\,  \cdot \rangle $ the stress-strain duality from \eqref{duality-product}. 
\par
 Then, $(\sft,\sfq)$ satisfies   \emph{Hill's maximum work principle}
 \begin{equation}
 \label{HmWP}
 H\left(\sfz(s), \frac{\dd {\sfp}'(s)}{\dd |\sfp'(s)|}\right)|\sfp'(s)| = [\serifsigma_\dev(s):\sfp'(s)] \qquad \foraa\, s \in (0,S),
 \end{equation}
  where the above equality holds in the sense of  measures on  $ \Omega \cup \Gdir $, 
 with $ [\serifsigma_\dev(s):\sfp'(s)] $ the distribution defined in \eqref{sD}.  Furthermore,
 defining 
 $\mu(s): = \mathcal{L}^n + | \sfp'(s)|$ for every $s\in [0,S]$, there exists $\widehat{\serifsigma}_\dev(s) \in L_{\mu(s)}^\infty (\Omega {\cup} \Gdir;\MD)$ such that for almost all $s\in (0,S)$ the following properties hold:
 \begin{subequations}
 \label{eq:lambda-CL-3-precise}
 \begin{align}
 &
 \widehat{\serifsigma}_\dev(s)  = \serifsigma_\dev(s)  && \mathcal{L}^n\text{-a.e.\ on } \Omega,
 \\
 &
  [\serifsigma_\dev(s):\sfp'(s)]  = \left( \widehat{\serifsigma}_\dev(s){:} \frac{\dd \sfp'(s)}{\dd |\sfp'(s)|} \right)   |\sfp'(s)|  && \text{on }  \Omega \cup \Gdir,
  \\
  &
  \partial_\pi H\left(\sfz(s), \frac{\dd \sfp'(s)}{\dd |\sfp'(s)|}  \right) \ni \widehat{\serifsigma}_\dev(s) && \text{for }   |\sfp'(s)|\text{-a.e. } x \in  \Omega \cup \Gdir.
 \end{align} 
 \end{subequations}
Conversely, \eqref{eq:lambda-CL-3-precise}  imply \eqref{HmWP} which, in turn, gives \eqref{mHDP}. 
  \end{lemma}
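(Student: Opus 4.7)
The plan is to build the argument around the pointwise measure inequality \eqref{Prop3.9}, which, for every $\sigma \in \calK_{\sfz(s)}(\Omega)$, reads $H(\sfz(s), \dd\sfp'(s)/\dd|\sfp'(s)|)|\sfp'(s)| \geq [\sigma_\dev : \sfp'(s)]$ as measures on $\Omega \cup \Gdir$, and its integrated counterpart \eqref{eq:carH}, which gives $\Hpp(\sfz(s),\sfp'(s)) \geq \langle \sigma_\dev\,|\,\sfp'(s)\rangle$. Both are applicable once one knows that the stress $\serifsigma(s) = \bbC(\sfz(s))\sfe(s)$ belongs to $\calK_{\sfz(s)}(\Omega)$: this I would first establish from the admissibility of $(\sft,\sfq)$ together with standard duality arguments in perfect plasticity (in the spirit of \cite{DMDSMo06QEPL}), possibly routing through Lemma \ref{le:0103222042}.

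For the forward direction \eqref{mHDP} $\Rightarrow$ \eqref{HmWP}, with $\serifsigma(s)\in\calK_{\sfz(s)}(\Omega)$ at hand, testing \eqref{Prop3.9} with $\sigma=\serifsigma(s)$ and integrating over $\Omega \cup \Gdir$ yields $\Hpp(\sfz(s),\sfp'(s)) \geq \langle \serifsigma_\dev(s)\,|\,\sfp'(s)\rangle$, which by hypothesis is an equality; since the two integrands are ordered in the sense of measures, the integral equality upgrades the pointwise inequality to the measure identity \eqref{HmWP}. Next, for the construction of $\widehat{\serifsigma}_\dev(s)$, I would decompose $|\sfp'(s)|$ via Radon--Nikod\'ym with respect to $\mathcal{L}^n$: on the absolutely continuous part of $\Omega$ set $\widehat{\serifsigma}_\dev(s):=\serifsigma_\dev(s)$, which already lies in $K(\sfz(s))$ a.e.; on the singular part (concentrated in $\Omega$ or on $\Gdir$), apply a measurable selection from the set-valued map $x\mapsto \partial_\pi H(\sfz(s,x), \dd\sfp'(s)/\dd|\sfp'(s)|(x))$, nonempty, closed, convex, and contained in the uniformly bounded set $K(\sfz(s,x))\subset B_{\bar R}(0)$ thanks to \eqref{Ksets-2}. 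The continuity $\sfz(s)\in \rmC^0(\overline\Omega)$ (granted by $\Hs\hookrightarrow \rmC^0$), combined with the Hausdorff-Lipschitz dependence of $K$ on $z$ from \eqref{Ksets-3}, ensures that this selection is measurable and lies in $L^\infty_{\mu(s)}$. The resulting $\widehat{\serifsigma}_\dev(s)$ fulfills all three conditions in \eqref{eq:lambda-CL-3-precise}: the first by definition, the third by the defining property of the selection, and the second by combining them with the measure identity \eqref{HmWP}.

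For the converse \eqref{eq:lambda-CL-3-precise} $\Rightarrow$ \eqref{mHDP}, the Fenchel equivalence for the $1$-homogeneous support function $H(\sfz(s,x),\cdot)$ turns the subdifferential inclusion $\widehat{\serifsigma}_\dev(s)\in\partial_\pi H(\sfz(s), \dd\sfp'(s)/\dd|\sfp'(s)|)$ into the pointwise identity $\widehat{\serifsigma}_\dev(s):\dd\sfp'(s)/\dd|\sfp'(s)| = H(\sfz(s), \dd\sfp'(s)/\dd|\sfp'(s)|)$, valid $|\sfp'(s)|$-a.e.; multiplying by $|\sfp'(s)|$ and invoking the second line of \eqref{eq:lambda-CL-3-precise} gives \eqref{HmWP}, which integrated on $\Omega\cup\Gdir$ yields \eqref{mHDP}.

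The delicate step is the second one: extending $\serifsigma_\dev(s)$, \emph{a priori} only defined $\mathcal{L}^n$-a.e.\ on $\Omega$, to a function on $\Omega\cup\Gdir$ that is measurable with respect to the singular measure $\mu(s)$, takes values in $K(\sfz(s,\cdot))$, and realizes the subdifferential selection on the singular support of $|\sfp'(s)|$. This is where the arguments of \cite{DMDSMo06QEPL} become indispensable, particularly their technique for handling plastic slip concentrated on the Dirichlet boundary $\Gdir$.
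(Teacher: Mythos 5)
Your converse argument coincides with the paper's own proof of that direction (use \eqref{charact-calR} to turn (\ref{eq:lambda-CL-3-precise}c) into the pointwise identity $H(\sfz,\tfrac{\dd\sfp'}{\dd|\sfp'|})=\widehat{\serifsigma}_\dev\,{:}\,\tfrac{\dd\sfp'}{\dd|\sfp'|}$, then combine with (\ref{eq:lambda-CL-3-precise}a,b) and integrate), and for the forward direction you attempt the natural direct proof, whereas the paper simply delegates it to \cite[Prop.~6.5]{Crismale-Lazzaroni}. The genuine gap is your very first step. You correctly identify that the whole forward argument hinges on $\serifsigma(s)\in\calK_{\sfz(s)}(\Omega)$ (only then is \eqref{Prop3.9} available, and only with that measure ordering can the scalar equality \eqref{mHDP} be upgraded to the measure identity \eqref{HmWP}), but your proposed derivation of this fact does not work: admissibility in the sense of Definition \ref{def:CL16} contains no stress constraint at all (it only encodes regularity, monotonicity of $\sft$, and \eqref{CL-adm-def-4}), and Lemma \ref{le:0103222042} cannot be ``routed through'' because its hypotheses $\calS_u\calE_0(\sft(s),\sfq(s))=\calW_p\calE_0(\sft(s),\sfq(s))=0$ are not among the assumptions of the present lemma --- they are precisely what is established separately (from $\DVito^*(\sft,\sfq)=0$, i.e.\ \eqref{eq:lambda-CL-1}) in the proofs of Propositions \ref{diff-charact-BV-CL} and \ref{prop:diff-charact-BV-CR} before this lemma is invoked. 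Nor can \eqref{mHDP} itself force the pointwise inclusion $\serifsigma_\dev(s,x)\in K(\sfz(s,x))$: it is a single scalar identity, and excesses and deficits of $\serifsigma_\dev\,{:}\,\sfp'$ against $H(\sfz,\sfp')$ could cancel in the integral. The constraint must therefore be taken as part of the setting in which the lemma is used (as it implicitly is in \cite[Prop.~6.5]{Crismale-Lazzaroni}), not derived from admissibility; the same remark applies to your parenthetical claim that on the absolutely continuous part $\serifsigma_\dev$ ``already lies in $K(\sfz(s))$ a.e.''.

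Granting the constraint, two further (fixable) imprecisions in the forward direction: the measurable selection only handles the part of $|\sfp'(s)|$ singular with respect to $\mathcal{L}^n$, so on the absolutely continuous part, where you set $\widehat{\serifsigma}_\dev=\serifsigma_\dev$, property (\ref{eq:lambda-CL-3-precise}c) is not ``by the defining property of the selection'': there it follows from \eqref{HmWP} read in terms of densities --- which requires the structure result from \cite{DMDSMo06QEPL,FraGia2012} that the absolutely continuous part of $[\serifsigma_\dev{:}\sfp']$ has the expected pointwise density --- combined with $\serifsigma_\dev\in K(\sfz)$ and the support-function characterization of $\partial_\pi H$. Conversely, on the singular part, (\ref{eq:lambda-CL-3-precise}b) follows from \eqref{HmWP} together with the one-homogeneity identity satisfied by the selection, not from the selection alone. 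So the construction is sound, but the attribution of which property follows from which ingredient needs to be straightened out.
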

  \begin{proof}
 We refer to \cite[Prop.\ 6.5]{Crismale-Lazzaroni} for the proof of the fact that \eqref{mHDP} implies \eqref{HmWP} and \eqref{eq:lambda-CL-3-precise}. In turn, recalling \eqref{charact-calR}, from (\ref{eq:lambda-CL-3-precise}c) we infer that 
  \[
   H\left(\sfz(s), \frac{\dd \sfp'(s)}{\dd |\sfp'(s)|}  \right)   = \left( \widehat{\serifsigma}_\dev(s){:} \frac{\dd \sfp'(s)}{\dd |\sfp'(s)|} \right)  \qquad  |\sfp'(s)|\text{-a.e.\ in} \,  \Omega \cup \Gdir.
  \]
  Combining this with  (\ref{eq:lambda-CL-3-precise}a)  and  (\ref{eq:lambda-CL-3-precise}b) we  conclude  \eqref{HmWP}, which yields  \eqref{mHDP}  in view of the definition \eqref{plast-diss-pot-visc} of $\Hpp$, and of the definition  \eqref{duality-product} of the stress-strain duality product. 
  \end{proof}
  
  \noindent
 It is in the sense of \eqref{eq:lambda-CL-3-precise}
 that we need to  understand the (formally written)  inclusion $\partial_\pi H(\sfz, \sfp'\ni\serifsigma_\dev$.
 \par
 We are now in a position to 
prove the following differential characterization
for the concept of $\BV$ solution from \cite{Crismale-Lazzaroni}. 
We mention in advance that,  for notational simplicity,  in \eqref{eq:lambda-CL-2} we have simply written $ \sfz'(s) $ in place of $J( \sfz'(s) )$, with $J: L^2(\Omega) \to \Hs(\Omega)^*$ the Riesz operator. 
\begin{proposition}
\label{diff-charact-BV-CL}
An admissible parameterized curve  $(\sft,\sfq)\in \adm 0S0T{\Qpp}$ is a   $\BVZ$ solution to system \eqref{RIS-intro}   
if and only if there exists a measurable function $\lambda_{\mathsf{z}}: [0,S]\to [0,1]$ such that 
\begin{align}
&
\label{switch-CL-1}
\sft'(s) \lambda_{\sfz}(s) =0  &&  \foraa\ s\in(0,S),
 \end{align}
and $(\sft,\sfq)$ satisfies \foraa\ $s \in (0,S)$
\begin{subequations}
\label{differential-chact-CL}
\begin{align}
& 
\label{eq:lambda-CL-1}
 \serifsigma(s) \in \calK_{\sfz(s)}(\Omega),\quad - \diver\, \serifsigma(s)=f(\sft(s)) \text{ a.e.\ in }\Omega,\quad [\serifsigma(\sft(s)) \rmn] = g(\sft(s)) \ \hn\text{-a.e.\ on }\Gneu,
 \\
&
\label{eq:lambda-CL-2}
(1{-}\lambda_{\sfz}(s))\, \partial \mathcal{R}(\sfz'(s))+ \lambda_{\sfz}(s)\, \sfz'(s) + (1{-}\lambda_{\sfz}(s)) \,\left[ \As \sfz(s){+}W'(\sfz(s)){+}\tfrac12 \bbC'(\sfz(s)) \sfe(s): \sfe(s) \right]   \ni 0 \quad
 \text{in } 
\Hs(\Omega)^* ,
\\
&
\label{eq:lambda-CL-3}
\partial_\pi \Hpp(\sfz(s), \sfp'(s)) \ni \serifsigma_\dev(s) \quad \text{in the sense of \eqref{eq:lambda-CL-3-precise}}.
\end{align}
\end{subequations}
 In fact, \eqref{eq:lambda-CL-1} holds at every $s\in (0,S]$.
\end{proposition}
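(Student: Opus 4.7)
\medskip

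\noindent\textbf{Proof proposal.} The plan is to reformulate the energy--dissipation balance \eqref{EnDiss-CL} as a pointwise identity via a chain rule for $\calE_0$, then split the pointwise identity according to the three variables $u$, $z$, $p$ and exploit the one-homogeneity / Cauchy--Young structure of $\calM_0$ to extract the three subdifferential inclusions and the parameter $\lambda_\sfz$. A symmetric argument then gives the sufficiency direction.

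\emph{Step 1 (chain rule and pointwise identity).} First I would invoke a chain rule for $t\mapsto \calE_0(\sft(s),\sfq(s))$ along admissible curves, exploiting that $\sfe\in\AC([0,S];\Lnn)$ and that the $t$-dependence of $\calE_0$ is $H^1$ in time. Differentiating the energy and subtracting from \eqref{EnDiss-CL} would yield the pointwise identity
\begin{equation*}
\calM_0(\sft(s),\sfq(s),\sft'(s),\sfq'(s)) \;+\; \bigl\langle \rmD_q \calE_0(\sft(s),\sfq(s)),\sfq'(s)\bigr\rangle \;=\; 0 \quad\foraa\,s\in(0,S).
\end{equation*}
Because $\calM_0$ decomposes as a sum of three nonnegative contributions in $z'$, $p'$, and $(u,z,p)$-only (via $\calM_{0,\mathrm{red}}$), whereas the pairing $\langle \rmD_q\calE_0,\sfq'\rangle$ splits into contributions along $u'$, $z'$, $p'$, the key point is to show that each group of terms vanishes independently; this will come from matching Young-type lower bounds term by term.

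\emph{Step 2 (equilibrium and yield condition).} The finiteness of $\calM_{0,\mathrm{red}}$ forces the indicator $I_{\{0\}}(\DVito^*(\sft,\sfq))$ to be zero \foraa\ $s$. Hence $\calS_u\calE_0(\sft,\sfq)=\mathcal{W}_p\calE_0(\sft,\sfq)=0$ and Lemma~\ref{le:0103222042} immediately gives \eqref{eq:lambda-CL-1}, including the yield constraint $\serifsigma(s)\in \calK_{\sfz(s)}(\Omega)$ and the equilibrium equations; these in fact extend by continuity to every $s\in(0,S]$ thanks to the absolute continuity of the curve. With \eqref{eq:lambda-CL-1} available, the pairing $\langle \rmD_u\calE_0,\sfu'\rangle$ effectively drops out of the identity in Step~1, reducing the analysis to the $(z,p)$-components.

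\emph{Step 3 (plastic flow rule).} Integrating by parts via the stress--strain duality \eqref{duality-product} gives
\begin{equation*}
-\bigl\langle \rmD_p\calE_0(\sft,\sfq),\sfp'\bigr\rangle \;=\; \langle \serifsigma_\dev \,|\, \sfp'\rangle,
\end{equation*}
while \eqref{eq:carH} provides $\Hpp(\sfz,\sfp')\ge \langle \serifsigma_\dev\,|\,\sfp'\rangle$. Matched with the pointwise identity from Step~1 this forces equality \eqref{mHDP}, and Lemma~\ref{l:plastic-flow-rule} then yields \eqref{eq:lambda-CL-3} in the sense of \eqref{eq:lambda-CL-3-precise}.

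\emph{Step 4 (damage flow rule and construction of $\lambda_\sfz$).} What remains is the balance
\begin{equation*}
\calR(\sfz'(s)) \,+\, \widetilde{\calM}_{0,\mathrm{red}}(\sft,\sfq,\sft',\sfq') \,+\, \bigl\langle \rmD_z\calE_0(\sft,\sfq),\sfz'\bigr\rangle \;=\; 0.
\end{equation*}
On the set $\{\sft'>0\}$, \eqref{2906191736} forces $\tilded_{L^2}(-\rmD_z\calE_0,\partial\calR(0))=0$, so $-\rmD_z\calE_0\in\partial\calR(0)$; combined with $\calR(\sfz')+\langle\rmD_z\calE_0,\sfz'\rangle=0$ and the characterization \eqref{charact-calR} this gives $-\rmD_z\calE_0\in\partial\calR(\sfz')$, so I set $\lambda_\sfz(s):=0$ there, which yields \eqref{eq:lambda-CL-2} and is compatible with \eqref{switch-CL-1}. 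On the set $\{\sft'=0\}$ I would write $\widetilde{\calM}_{0,\mathrm{red}}=\|\sfz'\|_{L^2}\|\gamma\|_{L^2}$ with $\gamma$ the minimizer in \eqref{tilde-d}, so $-\rmD_z\calE_0-\gamma\in\partial\calR(0)$. The Cauchy--Schwarz equality case forces $\gamma=\mu\,\sfz'$ for some $\mu\ge 0$, giving $-\rmD_z\calE_0\in\mu\sfz'+\partial\calR(\sfz')$; setting $\lambda_\sfz(s):=\mu/(1+\mu)\in[0,1)$ and dividing by $1+\mu$ produces exactly the convex combination \eqref{eq:lambda-CL-2}. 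Measurability of $\lambda_\sfz$ would follow from a selection argument applied to the measurable multifunction $s\mapsto \mathrm{Argmin}$ in \eqref{tilde-d}.

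\emph{Converse and main obstacle.} For the sufficiency, assuming \eqref{switch-CL-1}--\eqref{differential-chact-CL} I would reverse each step: \eqref{eq:lambda-CL-1} gives $\calM_{0,\mathrm{red}}$ finite and $\DVito^*=0$; \eqref{eq:lambda-CL-3} combined with Lemma~\ref{l:plastic-flow-rule} yields $\Hpp(\sfz,\sfp')=\langle\serifsigma_\dev\,|\,\sfp'\rangle=-\langle\rmD_p\calE_0,\sfp'\rangle$; and \eqref{eq:lambda-CL-2} together with the switching condition reproduces the $z$-component of the pointwise identity, after which the chain rule restores the integrated form \eqref{EnDiss-CL}. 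The main obstacle I anticipate is in Step~1: the chain rule for $\calE_0$ along curves $(\sft,\sfq)$ whose plastic component lives only in $\MbD$ and whose displacement is only in $\BD(\Omega)$, so that $\rmD_q\calE_0$ is not a priori a bounded functional on $\Qpp$; one must carefully use the safe-load condition \eqref{safe-load} together with the stress--strain duality \eqref{sD}--\eqref{duality-product} to make sense of the pairings, mirroring the arguments developed in \cite{Crismale-Lazzaroni,Crismale-Rossi}.
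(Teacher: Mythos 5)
Your proposal is correct and follows essentially the same route as the paper: the chain rule (the paper's Lemma \ref{l:ch-rule-pp-CL} and Corollary \ref{cor:further-charact}) turns the balance \eqref{EnDiss-CL} into a pointwise identity, finiteness of $\calM_0$ plus Lemma \ref{le:0103222042} gives \eqref{eq:lambda-CL-1}, and the sign/Cauchy--Schwarz splitting yields the separate identities handled by Lemma \ref{l:plastic-flow-rule} for $\sfp$ and the KRZ13-type argument (which you spell out, constructing $\lambda_{\sfz}=\mu/(1+\mu)$) for $\sfz$. The "main obstacle" you flag is precisely what the paper delegates to its chain-rule lemma borrowed from \cite[Lemma 7.6]{Crismale-Rossi}, so no genuine gap remains.
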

\begin{remark}
\upshape
System \eqref{differential-chact-CL} illustrates in a clear way how viscous behavior, \emph{only w.r.t.\ the variable $z$},  may arise in the jump regime, namely when  the system still evolves while $\sft'=0$ (i.e.\ the external time, recorded by the function $\sft$, is frozen). In that case, the parameter $\lambda_{\sfz}$ may be non-zero, thus activating the viscous contribution to  the flow rule
\eqref{eq:lambda-CL-2}. 
\end{remark}

\par
Prior to carrying out the proof of  Proposition \ref{diff-charact-BV-CL},   we record the following key chain-rule inequality, whose proof may be immediately inferred from that of   \cite[Lemma 7.6]{Crismale-Rossi}
(cf.\ also Lemma \ref{l:ch-rule-pp-CR} ahead).
 \begin{lemma}
 \label{l:ch-rule-pp-CL}
     Along any admissible parameterized curve 
    \begin{align}
    &
    \nonumber
(\sft,\sfq) \in \adm 0S0T{\Qpp}
\text{ s.t. }    \mathcal{M}_0(\sft,\sfq,\sft',\sfq')<+\infty \ \aein\, (0,S), 
 \intertext{ we have that $s \mapsto   \calE_{0}(\sft(s),\sfq(s)) $ is absolutely continuous on $[0,S]$ 
and there holds } 
&    
    \label{desired-chain-rule-pp}
    \begin{aligned}
    &
-\frac{\dd}{\dd s} \calE_{0}(\sft(s),\sfq(s)) +\partial_t  \calE_{0}(\sft(s),\sfq(s))\,  \sft'(s)  
 \\
  & = - \langle \rmD_z \calE_0(\sft(s),\sfq(s)), \sfz'(s) \rangle_{\Hs}  + \langle \serifsigma_\dev(s) \, |\, \sfp'(s) \rangle + \langle  \Diver\,\serifsigma(s) {+}  F(\sft(s)),  \sfu'(s) \rangle_{\BD}
  \\
  &
 \leq \mathcal{M}_0(\sft(s),\sfq(s),\sft'(s),\sfq'(s)) \qquad  \foraa\, s \in (0,S).
\end{aligned}
    \end{align}
 \end{lemma}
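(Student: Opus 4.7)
The plan is to mimic the structure of the proof of \cite[Lemma 7.6]{Crismale-Rossi}, specializing it to the perfectly plastic damage system without hardening and to the contact potential $\calM_0$ from \cite{Crismale-Lazzaroni}. The proof proceeds in three stages: (i) absolute continuity of $s\mapsto \calE_0(\sft(s),\sfq(s))$, (ii) a pointwise chain-rule equality exploiting the stress-strain duality and the consequences of $\calM_0<+\infty$, and (iii) the term-by-term upper bound by $\calM_0$.

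For stage (i), I would combine the admissibility properties \eqref{CL-admiss-curve} with the regularity of each constituent of $\calE_0$. The quadratic part $\calQ(\sfz,\sfe)$ is absolutely continuous because $\sfz\in\AC([0,S];\Hs(\Omega))$ embeds compactly into $\rmC^0(\overline\Omega)$, $\bbC$ is Lipschitz, and $\sfe\in\AC([0,S];\Lnn)$. The term $\frac12\ass(\sfz,\sfz)$ is absolutely continuous by bilinearity, while $\int_\Omega W(\sfz)$ is handled via a uniform positive lower bound on $\sfz$ (inherited from $W(\sfz(0))\in L^1$ and continuity in $s$) together with $W\in\rmC^2(0,+\infty)$. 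Finally, $\langle F(\sft),\sfu+w(\sft)\rangle$ is absolutely continuous thanks to \eqref{forces-u}, \eqref{dir-load}, and $\sfu\in\AC([0,S];\BD(\Omega))$.

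For stage (ii), the key observation is that the assumption $\calM_0(\sft,\sfq,\sft',\sfq')<+\infty$ forces $\calS_u \calE_0(\sft,\sfq) = \calW_p \calE_0(\sft,\sfq) = 0$, so by Lemma~\ref{le:0103222042} the stress $\serifsigma(s) = \bbC(\sfz(s))\sfe(s)$ belongs to $\Sigma(\Omega)\cap \calK_{\sfz(s)}(\Omega)$ and satisfies $-\diver\,\serifsigma(s)=f(\sft(s))$ and $[\serifsigma(s)\rmn] = g(\sft(s))$ on $\Gneu$, hence $-\Diver\,\serifsigma(s)=F(\sft(s))$ in $\BD(\Omega)^*$. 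Since $(\sfu(s)+w(\sft(s)),\sfe(s),\sfp(s))\in A(w(\sft(s)))$ for every $s$, differentiating in $s$ yields that $(\sfu'+\dot w(\sft)\sft',\sfe',\sfp')$ is admissible with Dirichlet datum $\dot w(\sft)\sft'$; I then apply the integration-by-parts identity following \eqref{duality-product} to this triple and $\serifsigma$ to obtain
\[
\langle \serifsigma,\sfe'\rangle = \langle \serifsigma,\rmE(\dot w(\sft))\rangle\sft' - \langle \serifsigma_\dev\,|\,\sfp'\rangle - \langle \Diver\,\serifsigma,\sfu'\rangle_{\BD}.
\]
Substituting into $\tfrac{\dd}{\dd s}\calQ(\sfz,\sfe) = \langle\serifsigma,\sfe'\rangle + \tfrac12\int_\Omega \bbC'(\sfz)\sfe\mdot\sfe\,\sfz'$, adding the derivatives of the remaining summands of $\calE_0$, and collecting the explicitly $t$-dependent contributions into $\partial_t\calE_0\,\sft'$ produces the claimed equality in \eqref{desired-chain-rule-pp}.

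For stage (iii), the three terms on the right-hand side are bounded by the three pieces of $\calM_0$. The $\sfu'$-term vanishes since $-\Diver\,\serifsigma=F(\sft)$. The $\sfp'$-term is dominated by $\Hpp(\sfz,\sfp')$ via \eqref{eq:carH}, because $\serifsigma\in\calK_{\sfz}(\Omega)$. For the $\sfz'$-term I distinguish the two cases in \eqref{2906191736}: if $\sft'>0$, finiteness of $\calM_0$ imposes $-\rmD_z\calE_0\in\partial\calR(0)$, whence $\langle -\rmD_z\calE_0,\sfz'\rangle\le \calR(\sfz')$; if $\sft'=0$, I select $\gamma\in\partial\calR(0)$ realizing the minimum in \eqref{tilde-d} and decompose $\langle -\rmD_z\calE_0,\sfz'\rangle = \langle\gamma,\sfz'\rangle + \langle -\rmD_z\calE_0-\gamma,\sfz'\rangle_{L^2}$, bounding the first summand by $\calR(\sfz')$ and the second by $\|\sfz'\|_{L^2}\,\tilded_{L^2}(-\rmD_z\calE_0,\partial\calR(0))$ through Cauchy–Schwarz. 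Summing the three estimates concludes the chain-rule inequality. The main obstacle throughout is the careful interplay between the absolute continuity of $\sfe$ in $L^2$ and the measure-valued character of $\sfp'$ in the stress-strain duality, which is resolved by working at points $s$ where $\sfu,\sfz,\sfe,\sfp,\sft$ are simultaneously differentiable in their respective spaces and by exploiting the admissibility of the time-derivative triple.
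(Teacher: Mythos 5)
Your proposal follows the same route the paper intends: it is precisely an adaptation of the proof of \cite[Lemma 7.6]{Crismale-Rossi} to the contact potential $\calM_0$, and the substantive steps are right --- extracting $\DVito^*(\sft(s),\sfq(s))=0$ from finiteness of $\calM_0$, identifying the stress via Lemma~\ref{le:0103222042} so that $-\Diver\,\serifsigma(s)=F(\sft(s))$, applying the stress--strain integration-by-parts formula to the differentiated admissible triple $(\sfu'+\dot w(\sft)\sft',\sfe',\sfp')\in A(\dot w(\sft)\sft')$ to get the equality in \eqref{desired-chain-rule-pp}, and then bounding the three terms by the three summands of $\calM_0$ (using \eqref{eq:carH} for the plastic term and, when $\sft'=0$, splitting the $\sfz'$-term with an optimal $\gamma\in\partial\calR(0)$ from \eqref{tilde-d}). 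The one point that does not stand as written is the justification of the uniform lower bound on $\sfz$ in the absolute-continuity step: positivity at $s=0$ together with continuity in $s$ does not propagate a uniform bound forward (the minimum of $\sfz(s)$ over $\overline\Omega$ may decay); the bound has to come from assumption \eqref{D2} combined with the uniform $\Hs(\Omega)$-bound on $\sfz$ (hence a uniform H\"older bound) and the fact that finiteness of $\calM_0$ forces $(\sft(s),\sfq(s))\in\mathrm{dom}(\calE_0)$, i.e.\ $W(\sfz(s))\in L^1(\Omega)$, along the curve --- the standard argument of \cite{Crismale-Lazzaroni,Crismale-Rossi} and the very reason \eqref{D2} is imposed.
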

  As an immediate consequence of Lemma \ref{l:ch-rule-pp-CL} we have the following characterization of 
 the $\BV_0$ solutions from Definition  \ref{def:CL16}, which complements the other characterizations provided in \cite[Prop.\ 5.3]{Crismale-Lazzaroni}.
 \begin{corollary}
 \label{cor:further-charact}
 An admissible parameterized curve  $(\sft,\sfq)\in \adm 0S0T{\Qpp}$ is a $\BVZ$ solution to system \eqref{RIS-intro}
 if and only if  the function
 $[0,S] \ni s \mapsto   \calE_{0}(\sft(s),\sfq(s)) $  is absolutely continuous and there holds
 \begin{equation}
\label{BV-as-equality}
\begin{aligned}
&    
-\frac{\dd}{\dd s} \calE_{0}(\sft(s),\sfq(s)) +\partial_t  \calE_{0}(\sft(s),\sfq(s))\,  \sft'(s)  
 \\
  & = - \langle \rmD_z \calE_0(\sft(s),\sfq(s)), \sfz'(s) \rangle_{\Hs}  + \langle \serifsigma_\dev(s) \, |\, \sfp'(s) \rangle + \langle  \Diver\,\serifsigma(s) {+}  F(\sft(s)),  \sfu'(s) \rangle_{\BD}
 = \mathcal{M}_0(\sft(s),\sfq(s),\sft'(s),\sfq'(s))
\end{aligned}
\end{equation}
 for almost all $s\in (0,S)$.
 \end{corollary}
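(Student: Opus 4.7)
The strategy is to pair the integrated energy-dissipation equality \eqref{EnDiss-CL} defining $\BVZ$ solutions with the pointwise chain-rule inequality already available through Lemma \ref{l:ch-rule-pp-CL}. The corollary's real content is that the inequality in \eqref{desired-chain-rule-pp} is saturated pointwise if and only if the integrated balance holds on every subinterval.

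For the ``only if'' direction, suppose $(\sft,\sfq)\in\adm 0S0T{\Qpp}$ is a $\BVZ$ solution. Taking $s_1=0$ and $s_2=S$ in \eqref{EnDiss-CL}, together with the finiteness of $\calE_0(\sft(\cdot),\sfq(\cdot))$ granted by admissibility, gives $\int_0^S \calM_0(\sft,\sfq,\sft',\sfq')\dd r<+\infty$, hence $\calM_0<+\infty$ a.e.\ on $(0,S)$. Lemma \ref{l:ch-rule-pp-CL} then applies and produces simultaneously the absolute continuity of $s\mapsto\calE_0(\sft(s),\sfq(s))$ and, for a.e.\ $s\in(0,S)$, the first equality of \eqref{BV-as-equality} together with the inequality $\le\calM_0$. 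Integrating this inequality on an arbitrary subinterval $[s_1,s_2]\subset[0,S]$ and invoking the absolute continuity yields
\[
\calE_0(\sft(s_1),\sfq(s_1))-\calE_0(\sft(s_2),\sfq(s_2))+\int_{s_1}^{s_2}\!\!\partial_t\calE_0(\sft(r),\sfq(r))\,\sft'(r)\dd r \leq \int_{s_1}^{s_2}\!\!\calM_0\,\dd r,
\]
which, compared term by term with \eqref{EnDiss-CL}, is forced to be an equality on every subinterval. By a standard Lebesgue differentiation argument, the pointwise inequality must then be a pointwise equality a.e.\ on $(0,S)$, which is exactly \eqref{BV-as-equality}.

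For the ``if'' direction, suppose the curve is admissible, the energy $s\mapsto\calE_0(\sft(s),\sfq(s))$ is absolutely continuous, and \eqref{BV-as-equality} holds a.e. Integrating \eqref{BV-as-equality} over $[s_1,s_2]$ and using the fundamental theorem of calculus for absolutely continuous functions recovers \eqref{EnDiss-CL} directly; since admissibility is already granted, $(\sft,\sfq)$ is a $\BVZ$ solution.

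The only genuinely delicate step is in the forward direction: the passage from the integrated balance on every subinterval to pointwise equality a.e. For this it is essential that Definition \ref{def:BV-sol-CL} gives \eqref{EnDiss-CL} for \emph{every} pair $0\le s_1\le s_2\le S$, not only at the endpoints, so that the nonnegative defect function $\calM_0 - \bigl(-\tfrac{\dd}{\dd s}\calE_0+\partial_t\calE_0\,\sft'\bigr)$ has vanishing integral on every subinterval and is therefore zero a.e.
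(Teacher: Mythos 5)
Your proof is correct and takes essentially the same route the paper intends: the paper presents Corollary \ref{cor:further-charact} as an immediate consequence of Lemma \ref{l:ch-rule-pp-CL}, namely that the chain-rule inequality combined with the integrated balance \eqref{EnDiss-CL} forces pointwise saturation a.e., while conversely integrating \eqref{BV-as-equality} recovers \eqref{EnDiss-CL}. Only a cosmetic remark: since the defect $\mathcal{M}_0-\bigl(-\tfrac{\dd}{\dd s}\calE_0+\partial_t\calE_0\,\sft'\bigr)$ is nonnegative a.e., the balance on the single interval $[0,S]$ already yields vanishing a.e., so the validity of \eqref{EnDiss-CL} on every subinterval is convenient but not strictly essential.
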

 \par
 We are now in a position to carry out the 
\begin{proof}[Proof  of Proposition \ref{diff-charact-BV-CL}]
By Corollary \ref{cor:further-charact}, $\BV_0$ solutions in the sense of Def.\ \ref{def:CL16} can be characterized in terms of \eqref{BV-as-equality}, whence we deduce that 
$\mathcal{M}_0(\sft(s),\sfq(s),\sft'(s),\sfq'(s))<\infty$ for almost all $s\in (0,S)$. Therefore, 
 \begin{equation}
 \label{DVs=0}
 \DVito^*(\sft(s),\sfq(s)) =0 \qquad \foraa\, s \in (0,S).
 \end{equation}
 In turn, \eqref{DVs=0} is equivalent to the validity of 
 \eqref{eq:lambda-CL-1}.
  Observe that   \eqref{eq:lambda-CL-1} extends to every $s\in [0,S]$ by the continuity of the functions
$s \mapsto \serifsigma(s) $, $s\mapsto f(\sft(s)) $, $s\mapsto g(\sft(s))$ and by the continuity 
w.r.t.\ the Hausdorff distance of $s\mapsto  \calK_{\sfz(s)}(\Omega)$ thanks to \eqref{Ksets-3}. 
\par
   In view of \eqref{DVs=0} and recalling the definition 
   \eqref{M0-CL} of $\calM_0$, 
    \eqref{BV-as-equality} can be then rewritten as 
   \[
  - \langle \rmD_z \calE_0(\sft(s),\sfq(s)), \sfz'(s) \rangle_{\Hs} -   \calR(\sfz'(s)) - \widetilde{\calM}_{0, \mathrm{red}}(\sft(s),\sfq(s),\sft'(s),\sfq'(s)) =    \Hpp(\sfz(s),\sfp'(s)) -   \langle \serifsigma_\dev(s) \, |\, \sfp'(s) \rangle   
  \]
  for a.a.\ $s\in (0,S)$.
  Now, on the one hand the right-hand side of the above equality is positive thanks to \eqref{eq:carH}. On the other hand, let $\omega \in \partial \calR(0) $ satisfy 
  $\tilded_{L^2} ({-}\mathrm{D}_z \calE_0 (t,q),\partial\calR(0))  =   \|{-} \rmD_z \calE_0(\sft(s),\sfq(s)){-}\omega\|_{L^2} $. Then, we may estimate the left-hand side of the above equality via 
  \[
  \begin{aligned}
  &
    \langle {-} \rmD_z \calE_0(\sft(s),\sfq(s)), \sfz'(s) \rangle_{\Hs} -   \calR(\sfz'(s)) - \widetilde{\calM}_{0, \mathrm{red}}(\sft(s),\sfq(s),\sft'(s),\sfq'(s)) 
    \\
    & 
    = \dddn{ \langle {-} \rmD_z \calE_0(\sft(s),\sfq(s)){-}\omega, \sfz'(s) \rangle_{\Hs}  - \widetilde{\calM}_{0, \mathrm{red}}(\sft(s),\sfq(s),\sft'(s),\sfq'(s))}{$T_1$}   + \dddn{\langle \omega, \sfz'(s) \rangle_{\Hs} -   \calR(\sfz'(s))}{$T_2$}  \leq 0,
    \end{aligned}
  \]
  where we have used that $T_1 \leq 0$ by the very definition of $\widetilde{\calM}_{0, \mathrm{red}}$ (cf.\ \eqref{2906191736}), while $T_2\leq 0$ thanks to \eqref{charact-calR}. 
Therefore, \eqref{BV-as-equality}
 is ultimately \emph{equivalent} to 
  \begin{subequations}
  \label{separate-identities} 
  \begin{align}
   &
 \label{separate-identities-1} 
     \langle{-} \rmD_z \calE_0(\sft(s),\sfq(s)) {-} \omega , \sfz'(s) \rangle_{\Hs}  = 
    \widetilde{\calM}_{0, \mathrm{red}}(\sft(s),\sfq(s),\sft'(s),\sfq'(s))  && \foraa\, s \in (0,S),
\\
   &
 \label{separate-identities-1+2}     
    \langle \omega, \sfz'(s)  \rangle_{\Hs} =    \calR(\sfz'(s))   && \foraa\, s \in (0,S),
    \\
    &
     \label{separate-identities-2} 
     \langle \serifsigma_\dev(s) \, |\, \sfp'(s) \rangle  =  \Hpp(\sfz(s),\sfp'(s)) && \foraa\, s \in (0,S)
  \end{align}
    \end{subequations}
    for every $\omega \in \partial \calR(0)$ such that  $ \|{-} \rmD_z \calE_0(\sft(s),\sfq(s)){-}\omega\|_{L^2} = \tilded_{L^2(\Omega)} ({-}\mathrm{D}_z \calE_0 (t,q),\partial\calR(0))  $. 
    
Now, the same argument as in, e.g., \cite[Prop.\ 5.1]{KRZ13} 
allows us to infer that
  \eqref{separate-identities-1} and \eqref{separate-identities-1+2}       are  \emph{equivalent} to 
  \eqref{eq:lambda-CL-2},  together with \eqref{switch-CL-1}.  In turn,  
 by  Lemma \ref{l:plastic-flow-rule}
 it follows that
  \eqref{separate-identities-2}
 is \emph{equivalent} to   \eqref{eq:lambda-CL-3-precise}.  
 All in all, we have shown the equivalence between \eqref{BV-as-equality} and  \eqref{switch-CL-1}\&\eqref{differential-chact-CL}.  This concludes the proof. 
 \end{proof}
 
\subsection{$\BV$ solutions via the full vanishing-viscosity approach}
\label{ss:3.2} 
The vanishing-viscosity contact potential for the $\BV$ solutions from \cite{Crismale-Rossi} is the functional 
$\oMliname: [0,T]  \times \Qpp \times [0,+\infty) \times \Qpp \to [0,+\infty]$ 
defined via 
\begin{subequations}\label{M0-CR}
\begin{equation}\label{2906191728}
\oMliname(t,q,t',q'):=   \calR(z') + \Hpp(z,p') + \oMliredname(t,q,t',q') 
\end{equation}
where for $q=(u,z,p)$ and $q'=(u',z',p')$ we have 
\begin{equation}
\label{2906191736'}
\begin{aligned}
& 
\text{if } t'>0,  \quad 
\oMliredname(t,q,t',q'):= 
\begin{cases}
 0 &\text{if } 
\begin{cases}
\calS_u \calE_0 (t,q)=0 ,  \\
\tilded_{L^2} ({-}\rmD_z \calE_0 (t,q),\partial\calR(0))=0
, \ \text{and} \\
\calW_p\calE_0(t,q)=0 ,
\end{cases}
 \\
 +\infty & \text{otherwise,}
\end{cases}
\end{aligned}
\end{equation}
\begin{equation}
\label{oMlizero=0}
\begin{aligned}
\text{if } t'=0,  \quad 
 \oMliredname(t,q,t',q'): =
\begin{cases}
   \DVito(u',p') \, \DVito^*(t,q)  & \text{if } z'=0, 
\\
\|z'\|_{L^2}\tilded_{L^2} ({-}\mathrm{D}_z \calE_0 (t,q),\partial\calR(0)) & \text{if } \DVito^*(t,q)=0 
\\
 & \text{ and } \tilded_{L^2}  ({-}\mathrm{D}_z \calE_0 (t,q),\partial\calR(0))<+\infty, 
\\
+\infty &  \text{otherwise}, 
\end{cases}
\end{aligned}
\end{equation}
with 
\begin{equation} \label{def:DVito}
  \DVito(u', p'): =   \sqrt{ \|u'\|_{H^1,\bbD}^2 {+} \|p'\|_{L^2(\Omega;\MD)}^2} \quad \text{ and } \quad 
  \DVito^* \text{ from \eqref{right-Dstar}.}
 \end{equation}
\end{subequations}
As previously remarked, for every $(t,q) \in [0,T]\times \Qpp$  we have that $ \calS_u \calE_0 (t,q)<+\infty$
and $\calW_p\calE_0(t,q)<+\infty$, hence $\DVito^*(t,q)<+\infty$ and 
the product $\DVito(u',p') \, \DVito^*(t,q)$ is well defined as soon as $u'\in H^1(\Omega;\R^n)$ and $p'\in L^2(\Omega;\MD)$; otherwise, we mean $\DVito(u',p') \, \DVito^*(t,q)=+\infty$.
\par
The (reduced) vanishing-viscosity contact potentials 
 $\calM_{0, \mathrm{red}}$  
 from \eqref{M0-CL-2}
 and  $\calM^{0,0}_{0,\mathrm{red}}$ differ
 from each other, both in their definition 
 for $t'>0$ and for $t'=0$. For $t'>0$,  $\calM^{0,0}_{0,\mathrm{red}}$ has to additionally enforce elastic equilibrium
 (i.e.\  $\calS_u \calE_0 (t,q)=0$) and the  \emph{stability} constraint that $\sigma_\dev \in \partial_\pi \mathcal{H}(z,0)$ (i.e.\ 
 $\calW_p\calE_0(t,q)=0 $)
 since, for the fully rate-dependent viscous systems, these costraints are no longer fulfilled.
Accordingly, viscous behavior in $u$ and $p$ may intervene in the jump regime of the rate-independent
limit system. This is encoded in the new term $ \DVito(u',p') \, \DVito^*(t,q) $ featuring in 
\eqref{oMlizero=0}, which appears in the energy-dissipation balance at jumps (i.e.\ for $t'=0$),  when $z'=0$. 
\par
 The following definition   specifies the properties of the parameterized curves that are $\BVZZZ$ solutions
and is to be compared with Definition \ref{def:CL16} of admissible parameterized curves in the sense of 
\cite{Crismale-Lazzaroni}.  
\begin{definition}
We call a parameterized  curve   $(\sft,\sfq)=(\sft, \sfu, \sfz, \sfp)\colon [0,S] \to [0,T] \times \Qpp$ \emph{admissible in an enhanced sense}
(`enhanced admissible' for short) if it satisfies \eqref{CL-adm-def-1}, \eqref{CL-adm-def-2},  \eqref{CL-adm-def-3} and, in addition,
\begin{subequations}\label{CR-admiss-curve}
\begin{align}
& (\sfu, \sfp) \in \AC_{\mathrm{loc}}(B^\circ; H^1(\Omega;\R^n){\times} L^2(\Omega; \MD)), \text{ where }B^\circ:=\{ s\in (0,S) \colon  \mathcal{D}^*(\sft(s), \sfq(s))>0\},  \label{2906191815}\\
& 
 \label{2906191815+1}
\sft \text{ is constant in every connected component of }B^\circ.
\end{align}
\end{subequations}
We will denote by $\eadm 0S 0T{\Qpp}$ the class of  enhanced admissible parameterized curves from $ [0,S] $ to $[0,T] \times \Qpp$.
\end{definition}
 Hence, enhanced admissible curves enjoy better spatial regularity, with $\sfu(\cdot) \in H^1(\Omega;\R^n)$
and $\sfp (\cdot) \in L^2(\Omega; \MD)$, in the set in which  either $\calS_u \calE_0 (\sft(\cdot),\sfq(\cdot))>0$
or $\calW_p\calE_0(\sft(\cdot),\sfq(\cdot))>0 $. With that definition at hand, 
we are now in a position to give the definition of $\BV$ solution in the sense of \cite{Crismale-Rossi}.
\begin{definition}
\label{def:BV-sol-CR}
We call an enhanced admissible parameterized   curve  $(\sft,\sfq)=(\sft, \sfu, \sfz, \sfp)\in \eadm 0S 0T{\Qpp}$  a  Balanced Viscosity solution for the perfectly plastic damage system
\eqref{RIS-intro} in the sense of \cite{Crismale-Rossi}
(a \emph{$\BVZZZ$ solution},  for short),  
if it satisfies 
 the energy-dissipation balance
\begin{equation}
\label{EnDiss-CR}
\calE_0(\sft(s_2),\sfq(s_2)) + \int_{s_1}^{s_2} \calM^{0,0}_0(\sft(r),\sfq(r),\sft'(r),\sfq'(r))
 \dd r =  \calE_0(\sft(s_1),\sfq(s_1)) + \int_{s_1}^{s_2} \partial_t \calE_0(\sft(r), \sfq(r)) \dd r
   \end{equation}
   for every $0 \leq s_1 \leq s_2 \leq S$. 
\end{definition}
\noindent 
The existence of $\BVZZZ $ solutions to system \eqref{RIS-intro} was proved 
in \cite[Thm.\ 7.9]{Crismale-Rossi}
for initial data $q_0 = (u_0,z_0,p_0) $ complying with \eqref{init-data} and  \eqref{EL-initial-CL}.
\subsection{A differential characterization for $\BVZZZ$ solutions}
\label{ss:3.5}
In this section we provide a differential characterization  for $\BVZZZ$ solutions.
Preliminarily,  we need to make precise in which sense we are going to understand the subdifferential inclusions governing the evolution of the reparameterized displacement and of the plastic variables. 
Indeed, 
by  formally  writing
\[
 -\lambda  \mathrm{div}\,\bbD \sig{\sfu'} - (1{-} \lambda)   \mathrm{Div}\,\serifsigma  =(1{-} \lambda)    F(\sft) \quad \aein\, (0,S),
\]
with $\lambda: [0,S]\to [0,1]$ a measurable function  (below we will have $\lambda =\lambda_{\sfu,\sfp}$),
 we shall mean
\begin{subequations}
\label{sense-displacements}
\begin{align}
&
\label{sense-displacements-1}
-   \mathrm{Div}\,\serifsigma(s)  =   F(\sft(s)) \qquad  \text{in } \BD(\Omega)^* && \text{if } \lambda(s) =0,
\\
& 
\label{sense-displacements-2}
\begin{cases}
 \sfu'(s) \in H_\Dir^1(\Omega;\R^n),
\\
-\lambda(s)  \mathrm{div}\,\bbD \sig{\sfu'(s)} - (1{-} \lambda(s))   \mathrm{Div}\,\serifsigma(s) =(1 {-} \lambda(s))    F(\sft(s)) \qquad  \text{in } H^1(\Omega;\R^n)^*
\end{cases}
&& \text{if } \lambda(s) >0,
\end{align}
\end{subequations}
where in \eqref{sense-displacements} $ \mathrm{Div}\,\serifsigma(s)$ denotes the restriction of the functional from 
\eqref{2307191723} to $H^1(\Omega;\R^n)$. 
In particular, let us emphasize that, when $\lambda>0$ the  displacement variable enjoys additional spatial regularity, and the quasistatic momentum balance \eqref{sense-displacements-2} allows for test functions in $H^1(\Omega;\R^n)$. 
Likewise,  by 
writing
\[
(1{-} \lambda)  \partial_{\pi} \Hpp(\sfz, \sfp') +  \lambda \sfp' \ni 
(1{-} \lambda) \serifsigma_\dev
\]
with $\lambda: [0,S]\to [0,1]$ a measurable function, we shall mean
\begin{subequations}
\label{sense-pl}
\begin{align}
&
\label{sense-pl-1}
 \partial_{\pi} \Hpp(\sfz(s), \sfp'(s)) \ni 
 \serifsigma_\dev(s) \quad \text{in the sense of \eqref{eq:lambda-CL-3-precise} }  && \text{if } \lambda(s) =0,
\\
& 
\label{sense-pl-2}
\begin{cases} 
\sfp'(s) \in L^2(\Omega;\MD),
\\
(1{-} \lambda(s))  \partial_{\pi} \mathcal{H}(\sfz(s), \sfp'(s)) +  \lambda(s) \sfp'(s) \ni 
(1{-} \lambda(s)) \serifsigma_\dev(s) \quad \aein\, \Omega
\end{cases}
&& \text{if } \lambda(s) >0.
\end{align}
\end{subequations}
Namely, the plastic flow rule improves to a pointwise-in-space formulation in the set $\{ \lambda>0\}$, whereas in the set $\{\lambda=0\}$ it  only holds in the weak form \eqref{eq:lambda-CL-3-precise}. 
\par
We are now in a position to state our differential characterization of $\BVZZZ$ solutions.  Observe that 
the  definition of enhanced admissible curve is tailored to 
the subdifferential inclusions \eqref{diff-charact-BV-CR}. 

    \begin{proposition}
    \label{prop:diff-charact-BV-CR}
    An enhanced admissible parameterized curve 
$(\sft,\sfq)  \in \eadm 0S0T{\Qpp}$ is a  $\BVZZZ$ solution to system \eqref{RIS-intro} 
 if and only if there exist two  measurable 
 functions $\lambda_{\sfu,\sfp}\,  \lambda_{\sfz}: [0; S] \to [0; 1]$ such that 
\begin{subequations}
\label{switch-BV-CR}
\begin{align}
&
\label{switch-BV-CR-1}
\sft'(s) \lambda_{\sfu,\sfp}(s) =\sft'(s) \lambda_{\sfz}(s) =0  &&  \foraa\ s\in(0,S),
\\
& 
\label{switch-BV-CR-2}
 \lambda_{\sfu,\sfp}(s)(1{-} \lambda_{\sfz}(s))  = 0  &&  \foraa\ s\in(0,S), 
 \end{align}
\end{subequations}
and the curves $(\sft,\sfq)$ satisfy \foraa\ $s \in (0,S)$
the system of subdifferential inclusions 
\begin{subequations}
\label{diff-charact-BV-CR}
\begin{align}
\label{diff-charact-BV-CR-1}
& -\lambda_{\sfu,\sfp}(s)  \mathrm{div}\,\bbD \sig{\sfu'(s)} - (1{-} \lambda_{\sfu,\sfp}(s))   \mathrm{Div}\,\serifsigma(s)  =(1{-} \lambda_{\sfu,\sfp}(s))    F(\sft(s)),  
\\
&
\label{diff-charact-BV-CR-2}
(1{-}\lambda_{\sfz}(s))\, \partial \mathcal{R}(\sfz'(s))+ \lambda_{\sfz}(s)\, \sfz'(s) + (1{-}\lambda_{\sfz}(s)) \,\left(\As \sfz(s){+}W'(\sfz(s)){+}\tfrac12 \bbC'(\sfz(s)) \sfe(s): \sfe(s)\right)   \ni 0 \quad
 \text{in } 
\Hs(\Omega)^* ,
\\
&
\label{diff-charact-BV-CR-3}
(1{-} \lambda_{\sfu,\sfp}(s)) \, \partial_{\pi} \Hpp(\sfz(s), \sfp'(s)) +  \lambda_{\sfu,\sfp}(s)\, \sfp'(s) \ni 
(1{-} \lambda_{\sfu,\sfp}(s)) \, \serifsigma_\dev(s),   
\end{align}
\end{subequations}
where \eqref{diff-charact-BV-CR-1} and \eqref{diff-charact-BV-CR-3} need to be interpreted as \eqref{sense-displacements} and \eqref{sense-pl}, respectively.
    \end{proposition}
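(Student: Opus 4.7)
The strategy parallels the one used for Proposition \ref{diff-charact-BV-CL}, but it must be adapted to the richer structure of $\oMliname$, which in the jump regime ($t'=0$) may contain a \emph{coupled} viscous contribution in $(u,p)$ through the product $\DVito(u',p')\,\DVito^*(t,q)$. The starting point is an analogue of Corollary \ref{cor:further-charact}: invoking the chain-rule inequality Lemma \ref{l:ch-rule-pp-CR} (referred to in the excerpt), one shows that $(\sft,\sfq)\in\eadm 0S0T{\Qpp}$ is a $\BVZZZ$ solution if and only if the map $s\mapsto\calE_0(\sft(s),\sfq(s))$ is absolutely continuous and the pointwise identity
\begin{equation*}
-\langle\rmD_z\calE_0,\sfz'\rangle_{\Hs}+\langle\serifsigma_\dev\,|\,\sfp'\rangle+\langle\Diver\,\serifsigma+F(\sft),\sfu'\rangle_{\BD}\;=\;\oMliname(\sft,\sfq,\sft',\sfq')
\end{equation*}
holds for a.a.\ $s\in(0,S)$, together with the finiteness condition $\oMliname(\sft,\sfq,\sft',\sfq')<+\infty$ a.e.

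\textbf{Splitting into three identities.} From the finiteness of $\oMliredname$ and the two cases $t'>0$ vs.\ $t'=0$ in \eqref{2906191736'}--\eqref{oMlizero=0}, one deduces first that in $\{\sft'>0\}$ one has $\calS_u\calE_0=\calW_p\calE_0=0$ and $d_{L^2}(-\rmD_z\calE_0,\partial\calR(0))=0$, which (by Lemma \ref{le:0103222042}) gives the equilibrium and stability conditions corresponding to $\lambda_{\sfu,\sfp}=\lambda_{\sfz}=0$ in \eqref{diff-charact-BV-CR}; this delivers \eqref{switch-BV-CR-1}. Next, arguing as in the proof of Proposition \ref{diff-charact-BV-CL}, one chooses $\omega\in\partial\calR(0)$ realizing the distance $\tilded_{L^2}(-\rmD_z\calE_0,\partial\calR(0))$ and decomposes the pointwise identity into three non-positive summands that must therefore vanish separately, namely the damage identity
\begin{equation*}
\langle-\rmD_z\calE_0-\omega,\sfz'\rangle_{\Hs}=\oMliredname^{z}(\sft,\sfq,\sft',\sfq'),\qquad \langle\omega,\sfz'\rangle_{\Hs}=\calR(\sfz'),
\end{equation*}
the plastic identity $\langle\serifsigma_\dev\,|\,\sfp'\rangle=\Hpp(\sfz,\sfp')$ (cf.\ \eqref{eq:carH}), and the displacement identity $\langle\Diver\,\serifsigma+F(\sft),\sfu'\rangle_{\BD}=0$ whenever the coupled viscous term is absent. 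The damage part is treated exactly as in Proposition \ref{diff-charact-BV-CL}, producing \eqref{diff-charact-BV-CR-2} together with a measurable parameter $\lambda_{\sfz}$.

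\textbf{The coupled $u$--$p$ part (main obstacle).} The new feature, compared with the CL case, is the term $\DVito(u',p')\,\DVito^*(\sft,\sfq)$ which, thanks to the very definitions \eqref{right-Dstar}, \eqref{surrogate-1}--\eqref{surrogate-2} and \eqref{def:DVito}, satisfies the Cauchy--Schwarz-type bound
\begin{equation*}
\DVito(u',p')\,\DVito^*(\sft,\sfq)\geq\langle-\Diver\,\serifsigma-F(\sft),\sfu'\rangle_{H^1_\Dir}+\langle\serifsigma_\dev,\sfp'\rangle_{L^2}-\calH(\sfz,\sfp'),
\end{equation*}
combined with $\Hpp(\sfz,\sfp')\geq\langle\serifsigma_\dev\,|\,\sfp'\rangle$. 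Equality throughout forces, on the one hand, $\partial_\pi\Hpp(\sfz,\sfp')\ni\serifsigma_\dev$ a.e., and, on the other hand, the existence of a common proportionality factor $\lambda_{\sfu,\sfp}\in[0,1]$ such that both $\sfu'$ is aligned with the dual representative of $-\Diver\,\serifsigma-F(\sft)$ in the $(\bbD,H^1)$-duality and $\sfp'$ is aligned with $\serifsigma_\dev$, yielding \eqref{diff-charact-BV-CR-1} and \eqref{diff-charact-BV-CR-3} in the senses \eqref{sense-displacements} and \eqref{sense-pl}. The technical point is that the supremum defining $\calS_u\calE_0$ is attained precisely when $u'\in H^1_\Dir(\Omega;\R^n)$ is proportional to the Riesz representative of $-\Diver\,\serifsigma-F(\sft)$, and analogously for $\calW_p\calE_0$, so the enhanced admissibility condition \eqref{2906191815} is used in an essential way.

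\textbf{Switching conditions.} Finally, to produce \eqref{switch-BV-CR-1}--\eqref{switch-BV-CR-2} one exploits the case distinction in \eqref{oMlizero=0}: when $\sft'=0$, either $\sfz'=0$ (and only the $u$--$p$ viscous term is active, allowing $\lambda_{\sfu,\sfp}>0$ but forcing $\lambda_{\sfz}=1$ trivially) or $\DVito^*=0$ (so $\lambda_{\sfu,\sfp}=0$ while $\lambda_{\sfz}\in(0,1]$ may activate the viscous damage term via \eqref{switch-CL-1}). Combining these alternatives yields \eqref{switch-BV-CR-2}. Conversely, starting from \eqref{switch-BV-CR}--\eqref{diff-charact-BV-CR} one reconstructs the pointwise energy identity by computing $\calR(\sfz')+\Hpp(\sfz,\sfp')$ from \eqref{diff-charact-BV-CR-2}--\eqref{diff-charact-BV-CR-3} (using \eqref{charact-calR} and Lemma \ref{l:plastic-flow-rule}) and recognizing the viscous terms as $\DVito\cdot\DVito^*$ through a saturated Young inequality; integration over $[s_1,s_2]$ gives back \eqref{EnDiss-CR}. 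The main obstacle is the delicate reconstruction of $\lambda_{\sfu,\sfp}$ from the scalar identity involving $\DVito\,\DVito^*$, for which one needs to show that the two alignment conditions for $\sfu'$ and $\sfp'$ can be realized by the \emph{same} scalar $\lambda_{\sfu,\sfp}$, a fact ensured by the homogeneous structure of $\DVito$ in the pair $(u',p')$.
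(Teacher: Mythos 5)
Your overall route is the same as the paper's: characterize $\BVZZZ$ solutions through the chain-rule identity (Lemma \ref{l:ch-rule-pp-CR}, Corollary \ref{rmk:upper}), split according to $\sft'>0$ or $\sft'=0$, pick elements of $\partial\calR(0)$, resp.\ $\partial_\pi\calH(\sfz,0)$, realizing the relevant distances, and read off the parameters $\lambda_{\sfz}$, $\lambda_{\sfu,\sfp}$ from saturated Cauchy--Schwarz-type inequalities; the cases $\sft'>0$ and $\sft'=0$ with $\DVito^*(\sft,\sfq)=0$ are indeed handled exactly as in Proposition \ref{diff-charact-BV-CL}. However, in the one step that is genuinely new --- the jump regime $\sft'=0$ with $\DVito^*(\sft(s),\sfq(s))>0$, i.e.\ $s\in B^\circ$ --- your stated conclusions are wrong. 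Equality in your chain does \emph{not} force $\serifsigma_\dev(s)\in\partial_\pi\Hpp(\sfz(s),\sfp'(s))$, nor is $\sfp'(s)$ aligned with $\serifsigma_\dev(s)$. Since $\partial_\pi H(z,\pi)\subset K(z)$ for every $\pi$, the inclusion $\serifsigma_\dev\in\partial_\pi\Hpp(\sfz,\sfp')$ would give $\serifsigma_\dev\in K(\sfz)$ a.e., i.e.\ $\calW_p\calE_0(\sft,\sfq)=0$, which is incompatible with the very regime in which the plastic part of $\DVito^*$ is positive and the viscous correction in \eqref{diff-charact-BV-CR-3} must be active. As written, the mechanism that actually produces the viscous plastic flow rule is missing, so the sentence ``yielding \eqref{diff-charact-BV-CR-1} and \eqref{diff-charact-BV-CR-3}'' does not follow.

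What the saturation argument really gives (this is Case 2 of the paper's proof) is the following: by finiteness of the potential one first gets $\sfz'(s)=0$; then, with $\zeta(s)$ a measurable selection realizing $d_{L^2}(\serifsigma_\dev(s),\partial_\pi\calH(\sfz(s),0))$, the vanishing of the two sign-definite terms yields $\zeta(s)\in\partial_\pi\calH(\sfz(s),\sfp'(s))$ (not $\serifsigma_\dev(s)$!) together with the alignment relations $\tilde\lambda(s)\,\sfp'(s)=\serifsigma_\dev(s)-\zeta(s)$ a.e.\ in $\Omega$ and $-\tilde\lambda(s)\,\mathrm{div}\,\bbD\sig{\sfu'(s)}=\mathrm{Div}\,\serifsigma(s)+F(\sft(s))$ in $H^1_\Dir(\Omega;\R^n)^*$, where the \emph{common} factor $\tilde\lambda(s)$ comes from equality in the two-dimensional Cauchy--Schwarz inequality between $\bigl(\|\sfu'\|_{H^1,\bbD},\|\sfp'\|_{L^2}\bigr)$ and $\bigl(\|\Diver\,\serifsigma{+}F\|_{(H^1,\bbD)^*},\|\serifsigma_\dev{-}\zeta\|_{L^2}\bigr)$, combined with equality in the individual duality pairings. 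Setting $\lambda_{\sfz}(s)=1$ and $\lambda_{\sfu,\sfp}(s)=\tilde\lambda(s)/(1+\tilde\lambda(s))$ then gives \eqref{switch-BV-CR} and \eqref{diff-charact-BV-CR-1}, \eqref{diff-charact-BV-CR-3} in the senses \eqref{sense-displacements-2}, \eqref{sense-pl-2}. In short: the projection $\zeta$ must replace $\serifsigma_\dev$ in both the subdifferential inclusion and the alignment of $\sfp'$; your formulation conflates the equilibrated regime $\DVito^*=0$ (where $\serifsigma_\dev\in\partial_\pi\Hpp(\sfz,\sfp')$ does hold, in the sense of \eqref{eq:lambda-CL-3-precise}) with the viscous one, and this needs to be repaired for the proof to go through.
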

    \begin{remark}
    \upshape
    In comparison to  the differential characterization 
     for
$\BVZ$ solutions provided by system \eqref{diff-charact-BV-CL},
system  \eqref{diff-charact-BV-CR}  features \emph{two} parameters, instead of one. Both $\lambda_{\sfz}$ and 
 $ \lambda_{\sfu,\sfp}$ have the role of activating the viscous contributions to the damage flow rule, and to the displacement equation/plastic flow rule, respectively, in the jump regime (i.e.\ when $\sft'=0$).  In fact, the viscous terms in 
 \eqref{diff-charact-BV-CR-1} and \eqref{diff-charact-BV-CR-3}
 are modulated by the same parameter, which reflects the fact that  viscous behavior intervenes  for the  variables $\sfu$ and $\sfp$ equally (or, in other terms, that $\sfu$ and $\sfp$ relax to elastic equilibrium and rate-independent evolution at the same rate, faster than $\sfz$). 
    
    \end{remark}
    
    \par
    As in the case of Prop.\ \ref{diff-charact-BV-CL}, the proof
    of Prop.\ \ref{prop:diff-charact-BV-CR}
     will rely on a suitable chain-rule inequality, which we recall below.
 \begin{lemma}{\cite[Lemma 7.6]{Crismale-Rossi}}
 \label{l:ch-rule-pp-CR} 
     Along any \emph{enhanced admissible} parameterized curve 
  \[
    \begin{aligned}
    &
(\sft,\sfq) \in \eadm 0S0T{\Qpp}
\text{ s.t. }    \mathcal{M}^{0,0}_0(\sft,\sfq,\sft',\sfq')<+\infty \quad \aein\, (0,S) 
\end{aligned}
\]
we have that
  \begin{equation}
    \label{desired-chain-rule-pp-CR}
    \begin{aligned}
&
s \mapsto   \calE_{0}(\sft(s),\sfq(s))  \text{ is absolutely continuous on } [0,S] \text{ and there holds } \foraa\, s \in (0,S)
\\
&    
-\frac{\dd}{\dd s} \calE_{0}(\sft(s),\sfq(s)) +\partial_t  \calE_{0}(\sft(s),\sfq(s))\,  \sft'(s)  
 \\
  & = - \langle \rmD_z \calE_0(\sft(s),\sfq(s)), \sfz'(s) \rangle_{\Hs}  + \langle \serifsigma_\dev(s) \, |\, \sfp'(s) \rangle + \langle  \Diver\,\serifsigma(s) {+}  F(\sft(s)),  \sfu'(s) \rangle_{\BD}
  \\
  &
 \leq \mathcal{M}^{0,0}_0(\sft(s),\sfq(s),\sft'(s),\sfq'(s)).
\end{aligned}
    \end{equation}
 \end{lemma}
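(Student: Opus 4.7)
The plan is to establish, in order, (i) absolute continuity of $s\mapsto \calE_{0}(\sft(s),\sfq(s))$ on $[0,S]$, (ii) the chain-rule \emph{equality} in the middle line of \eqref{desired-chain-rule-pp-CR}, and finally (iii) the upper bound by $\mathcal{M}^{0,0}_0$ via a case analysis driven by the structure of $\oMliredname$ in \eqref{2906191736'}--\eqref{oMlizero=0}.

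For (i), I would split $\calE_0$ into four summands: the elastic energy $\tfrac12\int_\Omega \bbC(\sfz)\sfe{:}\sfe\,\dd x$, the damage potential $\int_\Omega W(\sfz)\,\dd x$, the gradient regularizer $\tfrac12 \ass(\sfz,\sfz)$, and the loading term $-\langle F(\sft),\sfu{+}w(\sft)\rangle_{\BD}$. The regularizer is AC since $\sfz\in\AC([0,S];\Hs(\Omega))$; the loading term is AC by \eqref{forces-u}, \eqref{dir-load}, and \eqref{CL-adm-def-2}; the elastic energy is AC thanks to \eqref{CL-adm-def-3} (which gives $\sfe\in\AC([0,S];\Lnn)$), the $\rmC^{1,1}$ bound on $\bbC$ in \eqref{spd}, and the compact embedding $\Hs(\Omega)\Subset \rmC^{0}(\overline\Omega)$; the damage potential is AC by the $\rmC^2$-smoothness of $W$ on $(0,+\infty)$ together with the fact that $\sfz$ stays uniformly bounded away from $0$ along the trajectory, which one bootstraps from $W(z_0)\in L^1(\Omega)$, the coercivity \eqref{D2}, and the assumption $\mathcal{M}^{0,0}_0<+\infty$ (which rules out blow-up).

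For (ii), pointwise-in-$s$ differentiation of each summand yields
\[
\tfrac{\dd}{\dd s}\calE_{0}(\sft(s),\sfq(s))=\partial_t\calE_{0}\,\sft'(s)+\langle\rmD_u\calE_{0},\sfu'(s)\rangle+\langle\rmD_z\calE_{0},\sfz'(s)\rangle_{\Hs}+\langle\rmD_p\calE_{0},\sfp'(s)\rangle,
\]
with $\rmD_u\calE_{0}=-\Diver\,\serifsigma(s)-F(\sft(s))$, $\rmD_z\calE_{0}=\As\sfz+W'(\sfz)+\tfrac12\bbC'(\sfz)\sfe{:}\sfe$, and $\rmD_p\calE_{0}=-\serifsigma_\dev(s)$. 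The pairings of $\rmD_u\calE_{0}$ with $\sfu'\in\BD(\Omega)$ and of $\serifsigma_\dev(s)$ with $\sfp'\in\MbD$ are interpreted via \eqref{2307191723} and via the stress-strain duality \eqref{sD}--\eqref{duality-product}, respectively; this gives the middle equality in \eqref{desired-chain-rule-pp-CR}. For (iii), since $\mathcal{M}^{0,0}_0<+\infty$ a.e., I would partition $(0,S)$ into three measurable sets and estimate pointwise on each. On $\{\sft'>0\}$, the definition of $\oMliredname$ forces $\calS_u\calE_{0}=\calW_p\calE_{0}=0$ and $\tilded_{L^2}(-\rmD_z\calE_{0},\partial\calR(0))=0$; Lemma~\ref{le:0103222042} then yields equilibrium and stability, so the $\sfu'$-term vanishes, $\langle\serifsigma_\dev\,|\,\sfp'\rangle\leq\Hpp(\sfz,\sfp')$ by \eqref{eq:carH}, and $-\langle\rmD_z\calE_{0},\sfz'\rangle\leq\calR(\sfz')$ since $-\rmD_z\calE_{0}\in\partial\calR(0)$. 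On $\{\sft'=0,\ \sfz'=0\}$ enhanced admissibility \eqref{2906191815} gives $\sfu'\in H_\Dir^1(\Omega;\R^n)$ and $\sfp'\in L^2(\Omega;\MD)$, and the defining suprema of $\calS_u\calE_{0}$, $\calW_p\calE_{0}$ bound the first and third terms by $\calS_u\calE_{0}\,\|\sfu'\|_{H^1,\bbD}$ and $\calW_p\calE_{0}\,\|\sfp'\|_{L^2}+\calH(\sfz,\sfp')$; a Cauchy--Schwarz bundling then produces $\DVito(\sfu',\sfp')\,\DVito^{*}(\sft,\sfq)$. On $\{\sft'=0,\ \DVito^{*}=0\}$ the $\sfu'$- and $\sfp'$-terms are handled as in the first case, while the $\sfz'$-term is estimated by choosing $\omega\in\partial\calR(0)$ realizing the distance, giving $\|\sfz'\|_{L^2}\,\tilded_{L^2}(-\rmD_z\calE_{0},\partial\calR(0))+\calR(\sfz')$. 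In each case the sum matches $\oMliname$, proving the inequality.

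The main obstacle will be the simultaneous justification of (i) and (ii) when $\sfu,\sfp$ carry only measure-type spatial regularity: the pairings $\langle-\Diver\,\serifsigma+F,\sfu'\rangle_{\BD}$ and $\langle\serifsigma_\dev\,|\,\sfp'\rangle$ must be made rigorous through the Kohn--Temam-type duality, and the identification of $\tfrac{\dd}{\dd s}$ of the elastic energy requires an integration by parts in space that crucially exploits the safe-load condition \eqref{2909191106}--\eqref{safe-load} together with the ensuing property $\serifsigma(s)\in\Sigma(\Omega)$ with $[\serifsigma(s)\rmn]\in L^{\infty}(\Gneu;\R^n)$; a secondary subtlety is to verify that the partition of $(0,S)$ into the three cases above is indeed measurable, so that the pointwise estimates integrate to the required global bound.
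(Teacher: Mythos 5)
You are being measured here against what a complete argument needs, since the paper itself offers no proof of this statement: it is imported verbatim as \cite[Lemma 7.6]{Crismale-Rossi}. Within that yardstick, your step (iii) is essentially right and follows the standard pattern: the case analysis driven by finiteness of $\calM^{0,0}_0$, Lemma \ref{le:0103222042} to kill the $\sfu'$-term and to place $\serifsigma(s)\in\calK_{\sfz(s)}(\Omega)$ so that \eqref{eq:carH} applies, the defining suprema \eqref{surrogates} plus a Cauchy--Schwarz bundling on the set $B^\circ$ where the enhanced regularity \eqref{2906191815} is available, and a measurable selection $\omega\in\partial\calR(0)$ combined with \eqref{charact-calR} for the $\sfz'$-term. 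Step (i) is acceptable up to one imprecision: the uniform positivity of $\sfz$ is not ``bootstrapped from $\calM^{0,0}_0<+\infty$'', but obtained from the boundedness of the energy along the curve together with \eqref{D2} and the embedding $\Hs(\Omega)\Subset\rmC^0(\overline\Omega)$.

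The genuine gap is step (ii), which is the actual content of the lemma. You derive the middle equality by ``pointwise-in-$s$ differentiation of each summand'' with pairings $\langle\rmD_u\calE_0,\sfu'\rangle$ and $\langle\rmD_p\calE_0,\sfp'\rangle$, but $\calE_0$ is not differentiable with respect to $(u,p)$ on $\BD(\Omega)\times\MbD$, so these pairings are purely formal. The only honest starting point is $\tfrac{\dd}{\dd s}\calQ(\sfz,\sfe)=\langle\serifsigma,\sfe'\rangle_{L^2}+\tfrac12\langle\bbC'(\sfz)\sfe{:}\sfe,\sfz'\rangle$, and the substance of the proof is the conversion of $\langle\serifsigma(s),\sfe'(s)\rangle_{L^2}$ into $-\langle\Diver\,\serifsigma(s){+}F(\sft(s)),\sfu'(s)\rangle_{\BD}-\langle\serifsigma_\dev(s)\,|\,\sfp'(s)\rangle$ plus the loading terms that build $\partial_t\calE_0$. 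This requires: (a) that the rate triple $(\sfu'(s),\sfe'(s)-\rmE(w(\sft(s)))'(s),\sfp'(s))$ is kinematically admissible for a.e.\ $s$, which must be obtained by differentiating the relaxed Dirichlet constraint in the weak$^*$ sense; (b) that $\serifsigma(s)\in\Sigma(\Omega)$ with $[\serifsigma(s)\rmn]\in L^\infty(\Gneu;\R^n)$, so that the duality \eqref{sD}--\eqref{duality-product}, the functional \eqref{2307191723}, and the integration-by-parts formula are even defined --- and this is not automatic: it comes from the very case analysis you postpone to step (iii), namely from Lemma \ref{le:0103222042} and the safe-load data on the set where $\DVito^*(\sft,\sfq)=0$, whereas on $B^\circ$ no such structure holds and one must instead exploit $\sfu'\in H^1(\Omega;\R^n)$, $\sfp'\in L^2(\Omega;\MD)$ so that the pairings reduce to ordinary $H^1$/$L^2$ ones; and (c) the resulting identification of $\tfrac{\dd}{\dd s}\calE_0$, which also underlies the absolute-continuity claim. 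You explicitly list exactly these points as ``the main obstacle'' and then defer them; since they constitute the equality you are asked to prove, the proposal as written does not close the argument.
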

\noindent 
 For later use, we also record the following 
 consequence of the chain-rule inequality, cf.\ \cite[Proposition 7.7]{Crismale-Rossi}. 
 
\begin{corollary}
\label{rmk:upper}
An enhanced admissible parameterized   curve  $(\sft,\sfq)=(\sft, \sfu, \sfz, \sfp)\in \eadm 0S 0T{\Qpp}$ is  a 
$\BVZZZ$ solution
if and only if it 
satisfies 
one of the following \emph{equivalent} conditions:
\begin{enumerate}
\item the  energy-dissipation balance \eqref{EnDiss-CR} holds as the
inequality $\le$;
\item $(\sft,\sfq)$ 
fulfills \eqref{desired-chain-rule-pp-CR} as a chain of \emph{equalities}, i.e.\
\begin{equation}
\label{starting-identity}
\begin{aligned}
 & -\frac{\dd}{\dd s} \calE_{0}(\sft(s),\sfq(s)) +\partial_t  \calE_{0}(\sft(s),\sfq(s))\,  \sft'(s)  
 \\
  & = - \langle \rmD_z \calE_0(\sft(s),\sfq(s)), \sfz'(s) \rangle_{H^m}  + \langle \serifsigma_\dev(s) \, |\, \sfp'(s) \rangle + \langle  \Diver\,\serifsigma(s) {+}  F(\sft(s)),  \sfu'(s) \rangle_{\BD}
  \\
  &
 = \mathcal{M}^{0,0}_0(\sft(s),\sfq(s),\sft'(s),\sfq'(s)) \qquad \foraa\, s \in (0,S).
 \end{aligned}
\end{equation}
\end{enumerate}
\end{corollary}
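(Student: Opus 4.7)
My proposal is to prove the corollary by establishing the circle of implications
$(\BVZZZ\text{ solution}) \Longrightarrow (1) \Longrightarrow (2) \Longrightarrow (\BVZZZ\text{ solution})$,
with the nontrivial content lying in $(1)\Longrightarrow(2)$, where the chain-rule inequality of Lemma \ref{l:ch-rule-pp-CR} upgrades a one-sided energy-dissipation estimate into a pointwise chain of equalities.

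The implication $(\BVZZZ\text{ solution}) \Longrightarrow (1)$ is trivial. For $(1)\Longrightarrow(2)$, I would first note that the hypothesis \eqref{EnDiss-CR} with $\le$, applied with $s_1=0$ and $s_2=S$, implies $\int_0^S \calM^{0,0}_0(\sft(r),\sfq(r),\sft'(r),\sfq'(r))\, \dd r<\infty$, since the right-hand side of \eqref{EnDiss-CR} is finite. Hence $\calM^{0,0}_0(\sft,\sfq,\sft',\sfq')<\infty$ almost everywhere on $(0,S)$, so the hypotheses of Lemma \ref{l:ch-rule-pp-CR} are fulfilled. Therefore $s\mapsto\calE_0(\sft(s),\sfq(s))$ is absolutely continuous and \eqref{desired-chain-rule-pp-CR} holds almost everywhere. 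Integrating the inequality from \eqref{desired-chain-rule-pp-CR} over $[s_1,s_2]$ and using the fundamental theorem of calculus yields
\begin{equation*}
\calE_0(\sft(s_1),\sfq(s_1))+\int_{s_1}^{s_2}\partial_t\calE_0(\sft(r),\sfq(r))\,\sft'(r)\,\dd r \le \calE_0(\sft(s_2),\sfq(s_2))+\int_{s_1}^{s_2}\calM^{0,0}_0(\sft(r),\sfq(r),\sft'(r),\sfq'(r))\,\dd r,
\end{equation*}
which is precisely the \emph{opposite} direction of \eqref{EnDiss-CR}. Combined with hypothesis (1), this forces \eqref{EnDiss-CR} to hold as a full equality on every subinterval $[s_1,s_2]\subset[0,S]$.

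From this family of integral equalities I would extract pointwise equality as follows. Setting $\Phi(s):=\calM^{0,0}_0(\sft(s),\sfq(s),\sft'(s),\sfq'(s))+\frac{\dd}{\dd s}\calE_0(\sft(s),\sfq(s))-\partial_t\calE_0(\sft(s),\sfq(s))\sft'(s)$, Lemma \ref{l:ch-rule-pp-CR} guarantees $\Phi\ge 0$ almost everywhere, while the integral equality just derived yields $\int_{s_1}^{s_2}\Phi\,\dd r=0$ for every $0\le s_1\le s_2\le S$. Thus $\Phi\equiv 0$ a.e., which exactly means that the inequality in \eqref{desired-chain-rule-pp-CR} is an equality a.e., giving the second equality in \eqref{starting-identity}. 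The first equality in \eqref{starting-identity} is part of the assertion of Lemma \ref{l:ch-rule-pp-CR} itself, so (2) follows. Finally, for $(2)\Longrightarrow(\BVZZZ\text{ solution})$ I would use the absolute continuity of $s\mapsto\calE_0(\sft(s),\sfq(s))$ (again a consequence of Lemma \ref{l:ch-rule-pp-CR}, since (2) in particular forces $\calM^{0,0}_0<\infty$ a.e.) and integrate the pointwise equality \eqref{starting-identity} between $s_1$ and $s_2$; the fundamental theorem of calculus then reconstructs \eqref{EnDiss-CR} as an equality.

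The only delicate point is ensuring the applicability of Lemma \ref{l:ch-rule-pp-CR} in each implication, i.e.\ that the enhanced admissibility of $(\sft,\sfq)$ together with the finiteness of $\calM^{0,0}_0$ a.e.\ — which must be deduced from the respective hypothesis before invoking the chain rule — is available. Once this is in place, the rest is a standard integration-and-sign argument on the nonnegative defect $\Phi$.
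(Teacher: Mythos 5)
Your proposal is correct and follows essentially the same route as the paper, which obtains this corollary from the chain-rule inequality of Lemma \ref{l:ch-rule-pp-CR} (via the citation to \cite[Proposition 7.7]{Crismale-Rossi}): the upper energy estimate plus the integrated chain rule force the nonnegative defect $\Phi$ to vanish a.e., upgrading the inequality to the pointwise chain of equalities, and conversely integrating \eqref{starting-identity} recovers the balance \eqref{EnDiss-CR}. The only implicit point, standard in this setting, is the finiteness of the initial energy $\calE_0(\sft(0),\sfq(0))$ needed to extract $\calM^{0,0}_0<\infty$ a.e.\ before invoking the chain rule.
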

\par
We are now in a position to carry out the
 \begin{proof}[Proof of Proposition \ref{prop:diff-charact-BV-CR}]
We exploit the characterization of $\BVZZZ$ solutions in terms of the chain of equalities \eqref{starting-identity}.
Now, we shall distinguish three cases:
\\
\CASE{$1$: $\sft'(s)>0$.}
Then, by the definition
\eqref{M0-CR}
 of $\calM^{0,0}_0$, from $\mathcal{M}^{0,0}_0(\sft(s),\sfq(s),\sft'(s),\sfq'(s))<\infty$ we infer
$\tilded_{L^2} ({-}\rmD_z \calE_0 (\sft(s),\sfq(s)),\partial\calR(0))=0$
and  
\[
 \calS_u \calE_0 (\sft(s),\sfq(s))=\calW_p\calE_0(\sft(s),\sfq(s))=0 .
 \]
 By \eqref{equivalence},
The latter property is equivalent  to  \eqref{eq:lambda-CL-1}, hence we find the validity of \eqref{diff-charact-BV-CR-1} with 
$ \lambda_{\sfu,\sfp}(s)=0$.  All in all, identity \eqref{starting-identity}  reduces to 
\begin{equation}
\label{starting-identity-2}
 \langle \serifsigma_\dev(s) \, |\, \sfp'(s) \rangle  - \Hpp(\sfz(s), \sfp'(s)) = \langle \rmD_z \calE_0(\sft(s),\sfq(s)), \sfz'(s) \rangle_{\Hs}  + \calR(\sfz'(s)).
\end{equation}
Now, since $\serifsigma(s) \in \mathcal{K}_{\sfz(s)}$ for a.a.\ $s\in (0,S)$, 
by \eqref{Prop3.9} 
the above left-hand-side  is negative.  On the other hand, from $\tilded_{L^2} ({-}\rmD_z \calE_0 (\sft(s),\sfq(s)),\partial\calR(0))=0$ we infer that 
$\calR(v) \geq  \langle {-}\rmD_z \calE_0(\sft(s),\sfq(s)), v \rangle_{\Hs} $ for every $v\in \Hs(\Omega)$. Therefore, the above right-hand side is positive. Hence, both sides are equal to zero. 
From 
\begin{equation}
\label{Hill-plast}
\Hpp(\sfz(s), \dot{\sfp}'(s)) =  \langle \serifsigma_\dev(s) \, |\, \sfp'(s) \rangle 
\end{equation}
 we infer (recall Lemma \ref{l:plastic-flow-rule})  that 
\eqref{diff-charact-BV-CR-3} holds with $ \lambda_{\sfu,\sfp}(s)=0$. Likewise, from 
\begin{equation}
\label{Hill-dam}
\calR(\sfz'(s)) = -  \langle \rmD_z \calE_0(\sft(s),\sfq(s)), \sfz'(s) \rangle_{\Hs},   
\end{equation}
 recalling 
\eqref{charact-calR} 
we deduce that $- \rmD_z \calE_0(\sft(s),\sfq(s) \in \partial \calR(\sfz'(s))$, i.e.\ the validity of \eqref{diff-charact-BV-CR-2} with $\lambda_{\sfz}(s) =0$.
\par
 Conversely, from 
 \eqref{diff-charact-BV-CR-2} with $\lambda_{\sfz}(s) =0$ and \eqref{diff-charact-BV-CR-3}  with $ \lambda_{\sfu,\sfp}(s)=1$ we deduce 
   \eqref{Hill-plast} and \eqref{Hill-dam},  respectively, hence \eqref{starting-identity-2} which, in this case, is equivalent to \eqref{starting-identity}.
   \par
   \noindent
\CASE{$2$:
$\sft'(s)=0$ and $s\in B^\circ$}  (with $B^\circ$ the set from \eqref{2906191815}).
 Then, from   $\mathcal{M}^{0,0}_0(\sft(s),\sfq(s),\sft'(s),\sfq'(s))<\infty$ we infer
that 
\begin{equation}
\label{0th-ingredient-CR}
\sfz'(s)=0,
\end{equation}
 and  \eqref{starting-identity} reduces to 
\begin{equation}
\label{key-identity-2}
\begin{aligned}
  & \langle \serifsigma_\dev(s) , \sfp'(s) \rangle_{L^2} + \langle  \Diver\,\serifsigma(s) {+}  F(\sft(s)),  \sfu'(s) \rangle_{H^1} 
  \\
  &   =  \calH(\sfz(s), \sfp'(s)) +    \DVito(\sfu'(s),\sfp'(s)) \, \DVito^*(\sft(s),\sfq(s))
 \\
 &
  = \calH(\sfz(s), \sfp'(s)) +     \sqrt{ \|\sfu'(s)\|_{H^1,\bbD}^2 {+} \|\sfp'(s)\|_{L^2(\Omega;\MD)}^2} \sqrt{
  \|\Diver\,\serifsigma(s) {+}  F(\sft(s))\|_{(H^1,\bbD)^*}^2 {+} 
   d_{L^2}(\serifsigma_\dev(s), \partial_\pi \mathcal{H}(z,0))^2 }\,.
  \end{aligned}
  \end{equation}
For \eqref{key-identity-2},  we have used that,
 on the left-hand side, the 
  duality pairing involving $\sfu'$ is between  $H^1(\Omega;\R^n)$ and $H^1(\Omega;\R^n)^*$ 
  since
the  admissible curve $(\sft,\sfq)$ enjoys the enhanced spatial regularity 
   $\sfu' \in H^1(\Omega;\R^n)$ on the set $B^\circ$. In turn, the right-hand side  of \eqref{key-identity-2} has been rewritten in view 
  of  \eqref{true-interpretation}. 
 Likewise,
 $\Hpp (\sfz(s), \sfp'(s)) =  \calH(\sfz(s), \sfp'(s))$ and 
  the stress-strain duality reduces to the scalar product in $L^2$ because $\sfp'(s) \in L^2(\Omega;\Mnn)$.
Let us now consider a measurable selection $s\mapsto \zeta(s) \in \partial_\pi \calH(\sfz(s),0) $ such that 
\[
 d_{L^2}(\serifsigma_\dev(s), \partial_\pi \calH(z,0))
  = \| \serifsigma_\dev(s) {-}\zeta(s)\|_{L^2(\Omega;\MD)} .
 \]
 Then, \eqref{key-identity-2} rewrites as
\[
\begin{aligned}
& 
 \langle \serifsigma_\dev(s) - \zeta(s) , \sfp'(s) \rangle_{L^2}    +  \langle  \Diver\,\serifsigma(s) {+}  F(\sft(s)),  \sfu'(s) \rangle_{H^1} 
 \\
 & \quad   - 
  \sqrt{ \|\sfu'(s)\|_{H^1,\bbD}^2 {+} \|\sfp'(s)\|_{L^2(\Omega;\MD)}^2} \sqrt{
  \|\Diver\,\serifsigma(s) {+}  F(\sft(s))\|_{(H^1,\bbD)^*}^2 {+} 
    \| \serifsigma_\dev(s) {-}\zeta(s)\|_{L^2(\Omega;\MD)}^2 }
   \\
   &  =    \calH(\sfz(s), \sfp'(s)) -\langle  \zeta(s) , \sfp'(s) \rangle_{L^2} .
   \end{aligned}
\]
While the left-hand side is  negative by Cauchy-Schwarz inequality,  the right-hand side is positive since $ \zeta(s) \in \partial_\pi \calH(\sfz(s),0)$, cf.\  
\eqref{charact-calR}. 
All in all, 
we conclude that both sides are equal to zero. Now, combining the fact that $ \zeta(s) \in \partial_\pi \calH(\sfz(s),0) $ with 
the identity
\[
 \calH(\sfz(s), \sfp'(s))  = \langle  \zeta(s) , \sfp'(s) \rangle_{L^2} 
\]
 and again resorting to \eqref{charact-calR}, 
we find that 
\begin{equation}
\label{first-ingredient-CR}
\zeta(s) \in \partial_{\pi} \calH(\sfz(s), \sfp'(s)).
\end{equation}
From the equality
\[
\begin{aligned}
&
\sqrt{ \|\sfu'(s)\|_{H^1,\bbD}^2 {+} \|\sfp'(s)\|_{L^2(\Omega;\MD)}^2} \sqrt{
  \|\Diver\,\serifsigma(s) {+}  F(\sft(s))\|_{(H^1,\bbD)^*}^2 {+} 
    \| \serifsigma_\dev(s) {-}\zeta(s)\|_{L^2(\Omega;\MD)}^2 }  
    \\
     & = \langle \serifsigma_\dev(s)-\zeta(s) , \sfp'(s) \rangle_{L^2} + \langle  \Diver\,\serifsigma(s) {+}  F(\sft(s)),  \sfu'(s) \rangle_{H^1} 
    \end{aligned}
\]
we infer that  
\begin{subequations}
\label{second-ingredient-CR}
\begin{align}
&
\label{second-ingredient-CR-1}
-\tilde\lambda(s)  \mathrm{div}\,\bbD \sig{\sfu'(s)}  =   \mathrm{Div}\,\serifsigma(s)  +    F(\sft(s))  && \text{in}\ H_\Dir^1(\Omega;\R^n)^* ,
\\
&
\label{second-ingredient-CR-2}
\tilde\lambda(s) \sfp'(s)  = \serifsigma_\dev(s) {-}\zeta(s)     && \aein\, \Omega\,,
\\
&
\label{second-ingredient-CR-3}
 \text{with } \tilde\lambda(s) =  \frac{ \sqrt{
  \|\Diver\,\serifsigma(s) {+}  F(\sft(s))\|_{(H^1,\bbD)^*}^2 {+} 
    \| \serifsigma_\dev(s) {-}\zeta(s)\|_{L^2(\Omega;\MD)}^2 }  }{\sqrt{ \|\sfu'(s)\|_{H^1,\bbD}^2 {+} \|\sfp'(s)\|_{L^2(\Omega;\MD)}^2}}\,.
\end{align}
\end{subequations}
Combining  \eqref{0th-ingredient-CR}, \eqref{first-ingredient-CR}, and  \eqref{second-ingredient-CR} we deduce the validity of system \eqref{diff-charact-BV-CR} 
with 
\[
 \lambda_{\sfz}(s)=1 \quad\text{ and } \quad  \lambda_{\sfu,\sfp}(s) = \frac{\tilde\lambda(s)}{1+\tilde\lambda(s)} \in (0,1).
 \]
 \par
 Conversely, 
 it can be easily checked that,   if
 $\sft'(s)=0$ and $s\in B^\circ$, 
   the validity of system \eqref{diff-charact-BV-CR} yields \eqref{key-identity-2},  hence
   \eqref{starting-identity}.  
 \par
 \noindent
 \CASE{$3$: $\sft'(s)=0$ and $s\notin B^\circ$.} Hence,  $\mathcal{D}^*(\sft(s), \sfq(s)) =0$, yielding
 \begin{equation*}
- \Diver\,\serifsigma(s) =   F(\sft(s)),
 \end{equation*}
 i.e.\ \eqref{diff-charact-BV-CR-1} with $  \lambda_{\sfu,\sfp}(s) =0$.
  Then,
   \eqref{starting-identity} reduces to 
\begin{equation}
\label{key-identity-3}
\begin{aligned}
&
   \langle {-}  \rmD_z \calE_0(\sft(s),\sfq(s)), \sfz'(s) \rangle_{\Hs} +   \langle \serifsigma_\dev(s) \, |\, \sfp'(s) \rangle   \\ & =  \calR(\sfz'(s)) +   \|\sfz'(s)\|_{L^2}\tilded_{L^2} ({-}\mathrm{D}_z \calE_0 (\sft(s),\sfq(s)),\partial\calR(0)) + \Hpp(\sfz(s), \sfp'(s))
   \end{aligned}
  \end{equation}
and, arguing in the same way as in the proof of Proposition \ref{diff-charact-BV-CL} 
we conclude the validity of 
  \eqref{diff-charact-BV-CR-2} and   \eqref{diff-charact-BV-CR-3}   with $  \lambda_{\sfu,\sfp}(s) =0$ and some  $  \lambda_{\sfz}(s) \geq 0$. 
  \par
Conversely,
it can be proved that \eqref{diff-charact-BV-CR-2} and   \eqref{diff-charact-BV-CR-3}   with $  \lambda_{\sfu,\sfp}(s) =0$ and some  $  \lambda_{\sfz}(s) \geq 0$ yield \eqref{key-identity-3}.
\par
With this, we conclude the proof.
\end{proof}
%

\section{A complete characterization of  $\BVZZZ$ solutions}
\label{sez:complete-charact} 

The main result of this section is the following theorem, whose proof adapts 
that of \cite[Prop.\ 5.5]{MRS14}.
\begin{theorem}
\label{thm:delusione}
 Let $(\sft,\sfq) \in \eadm 0S0T{\Qpp}$ be a 
 $\BVZZZ$ solution to the perfectly plastic rate-independent system for damage \eqref{RIS-intro}.
 Suppose that 
 $(\sft,\sfq)$ is \emph{non-degenerate}, namely that
 \begin{equation}
 \label{non-degeneracy}
 \sft'(s)+\|\sfu'(s)\|_{H^1(\Omega)}+\|\sfz'(s)\|_{\Hs(\Omega)}+\|\sfp'(s)\|_{L^2(\Omega)}>0  \qquad \foraa\, s \in (0,S).
 \end{equation}
   Set
\begin{equation*}
\mathfrak S:= \{ s \in [0,S]\, : \   \DVito^*(\sft(s),\sfq(s))=0\}.
\end{equation*}
Then, $\mathfrak S$ is either empty or it has the form $[s_*, S]$ for
some $s_*\in [0,S]$.
\begin{itemize}
\item[(a)] Assume $s_*>0$, then for $s\in [0,s_*)=[0,S]\setminus \mathfrak S$
we have $\sft(s) \equiv \sft(0)$
and $\sfz(s) \equiv  \sfz(0)$, whereas 
\begin{subequations}
\label{transition-layer}
\begin{align}
&
\label{transition-layer-1}
 \DVito^*(\sft(0),\sfq(0))>0
 \intertext{and 
$(\sfu,\sfp)$ is a solution to the
system}
& 
\label{transition-layer-2}
\begin{aligned}
& -\lambda_{\sfu,\sfp}(s)  \mathrm{div}\,\bbD \sig{\sfu'(s)} - (1{-} \lambda_{\sfu,\sfp}(s))   \mathrm{Div}\,\bbC(\sfz(0))\sfe(s)  =(1{-} \lambda_{\sfu,\sfp}(s))    F(\sft(s))\,,  
\\
&
(1{-} \lambda_{\sfu,\sfp}(s)) \, \partial_{\pi} \mathcal{H}(\sfz(0), \sfp'(s)) +  \lambda_{\sfu,\sfp}(s)\, \sfp'(s) \ni 
(1{-} \lambda_{\sfu,\sfp}(s)) \, (\bbC(\sfz(0))\sfe(s))_\dev. 
\end{aligned}
\end{align}
\end{subequations}
\item[(b)] 
Suppose that
 $s_*<S$.  Then,
$
 \DVito^*(\sft(s),\sfq(s)) \equiv 0 $  for every $ s\in
\mathfrak S=[s_*,S],$
and the curve $(\sft,\sfq)$ is a $\BV$ solution  to system
\eqref{RIS-intro}
in the sense of Definition \ref{def:BV-sol-CL}.
\end{itemize}
\end{theorem}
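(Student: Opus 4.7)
The plan is to show that $B^\circ:=\{s\in[0,S]: \DVito^*(\sft(s),\sfq(s))>0\}$ is either empty or of the form $[0,s_*)$; the rest of the statement then follows by inspecting the dynamics separately on $B^\circ$ and on $\mathfrak S=[s_*,S]$. Continuity of $s\mapsto \DVito^*(\sft(s),\sfq(s))$ along the AC curve---which follows from the AC regularity of $(\sft,\sfu,\sfz,\sfp,\sfe)$, the Hausdorff continuity \eqref{Ksets-3} of $K(\cdot)$, and the continuity of the dual norm and of the $L^2$-distance appearing in \eqref{surrogates}---shows that $B^\circ$ is open. Arguing by contradiction, suppose $(a,b)\subset B^\circ$ is a connected component with $a>0$, so $a\in\mathfrak S$ by closedness. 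Enhanced admissibility \eqref{2906191815+1} gives $\sft\equiv\bar t:=\sft(a)$ on $(a,b)$. Finiteness a.e.\ of $\calM^{0,0}_0$ (from \eqref{EnDiss-CR}) together with $\sft'=0$ and $\DVito^*>0$ forces the first branch of \eqref{oMlizero=0}, so $\sfz'=0$ and $\sfz\equiv\bar z:=\sfz(a)$; the switching relation $\lambda_{\sfu,\sfp}(1-\lambda_{\sfz})=0$ from \eqref{switch-BV-CR-2}, together with the observation that $\lambda_{\sfu,\sfp}=0$ would via Lemma \ref{le:0103222042} force $\DVito^*=0$, yields $\lambda_{\sfz}\equiv 1$ and $\lambda_{\sfu,\sfp}(s)=:\lambda(s)\in(0,1]$ on $(a,b)$.

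With $(\bar t,\bar z)$ frozen, \eqref{diff-charact-BV-CR-1}--\eqref{diff-charact-BV-CR-3} reduce to the viscous subsystem
\[
-\lambda(s)\mathrm{div}\,\bbD\sig{\sfu'(s)}=(1{-}\lambda(s))\bigl(\mathrm{Div}\,\serifsigma(s)+F(\bar t)\bigr), \qquad \lambda(s)\sfp'(s)+(1{-}\lambda(s))\partial_\pi \calH(\bar z,\sfp'(s))\ni(1{-}\lambda(s))\serifsigma_\dev(s),
\]
with $\serifsigma(s)=\bbC(\bar z)(\sig{\sfu(s)+w(\bar t)}-\sfp(s))$. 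By Lemma \ref{le:0103222042}, the condition $a\in\mathfrak S$ translates into $-\mathrm{Div}\,\serifsigma(a)=F(\bar t)$ and $\serifsigma_\dev(a)\in\partial_\pi \calH(\bar z,0)$; in particular, the constant curve $(\sfu(a),\sfp(a))$ solves this subsystem. A uniqueness argument for the associated Cauchy problem---relying on the strong monotonicity of $\lambda I+(1{-}\lambda)\partial_\pi \calH(\bar z,\cdot)$ for $\lambda>0$, on the ellipticity of $-\mathrm{div}\,\bbD\sig{\cdot}$ on $H^1_{\Dir}(\Omega;\R^n)$, and adapting the finite-dimensional argument of \cite[Prop.\ 5.5]{MRS14}---then yields $(\sfu,\sfp)\equiv(\sfu(a),\sfp(a))$ on $[a,b)$, hence $\DVito^*\equiv 0$ there, contradicting $(a,b)\subset B^\circ$. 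Thus $B^\circ$ is at most the single initial component $[0,s_*)$.

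Part (a) is then immediate: on $[0,s_*)=B^\circ$ the analysis above gives $\sft\equiv\sft(0)$ and $\sfz\equiv\sfz(0)$; \eqref{transition-layer-1} amounts to $0\in B^\circ$, and \eqref{transition-layer-2} is precisely the viscous subsystem displayed above. For (b), since $\DVito^*\equiv 0$ on $\mathfrak S$, the potentials $\calM^{0,0}_0$ and $\calM_0$ coincide a.e.\ on $\mathfrak S$: when $\sft'>0$ both reduce to $\calR(\sfz')+\Hpp(\sfz,\sfp')$ (finiteness of $\calM^{0,0}_0$ forces $\tilded_{L^2}(-\rmD_z\calE_0,\partial\calR(0))=0$), while for $\sft'=0$ the extra term $\DVito(\sfu',\sfp')\DVito^*$ in \eqref{oMlizero=0} vanishes and both collapse to the common expression in \eqref{2906191736}. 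Restriction of \eqref{EnDiss-CR} to $[s_*,S]$ thus yields \eqref{EnDiss-CL}, and admissibility of $(\sft,\sfq)|_{[s_*,S]}$ in the sense of Def.\ \ref{def:CL16} is straightforward, the only non-trivial point being condition \eqref{CL-adm-def-4}, which follows because on $\mathfrak S$, $\sft'>0$ forces $\tilded_{L^2}=0$. The main obstacle of the argument is the uniqueness of the viscous Cauchy subsystem in the second paragraph, where $\lambda(s)$ is only measurable and may degenerate near $s=a$; this is precisely the point addressed by \cite[Prop.\ 5.5]{MRS14}, the adaptation to the present PDE setting being enabled by the convexity of $\calE_0(\bar t,\cdot,\bar z,\cdot)$ in the elastic strain and the maximal monotonicity of $\partial_\pi \calH(\bar z,\cdot)$.
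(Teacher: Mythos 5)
Your proposal is correct and follows essentially the same strategy as the paper's proof: (lower semi)continuity of $s\mapsto\DVito^*(\sft(s),\sfq(s))$ to decompose $[0,S]\setminus\mathfrak S$ into components, the differential characterization of Proposition \ref{prop:diff-charact-BV-CR} to freeze $\sft$ and $\sfz$ and reduce to the viscous $(\sfu,\sfp)$ subsystem there, uniqueness of the associated Cauchy problem together with the constant solution issued from an equilibrated left endpoint to exclude every component other than an initial one, and the identification of the evolution on $[s_*,S]$ with the $\BVZ$ notion (the paper phrases this last step via system \eqref{diff-charact-BV-CR} with $\lambda_{\sfu,\sfp}\equiv 0$ and Proposition \ref{diff-charact-BV-CL}, you via coincidence of the contact potentials in the energy balance, a minor variant). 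The only cosmetic deviations are that you reach the contradiction through constancy forcing $\DVito^*\equiv 0$ rather than through the non-degeneracy property \eqref{non-degeneracy-conclusions-1} used in the paper, and that your justification of $\lambda_{\sfu,\sfp}>0$ on $B^\circ$ via Lemma \ref{le:0103222042} is exactly as (in)formal as the paper's own reading of \eqref{diff-charact-BV-CR-1} and \eqref{diff-charact-BV-CR-3}.
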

  \noindent
 Thus,  Theorem \ref{thm:delusione} provides a complete characterization of (non-degenerate) $\BVZZZ$ solutions. It
  asserts that, if a $\BVZZZ$ solution $(\sft,\sfq)$ starts from an unstable datum $q_0$ with $\DVito^*(0,q_0)>0$, then during an initial interval 
 the damage variable $\sfz$ is frozen and  the pair
$(\sfu,\sfp)$ evolves, possibly governed by viscosity in both variables. If it
 reaches,  at some 
 time $s^*$,  the state in which   elastic equilibrium ($\calS_u \calE_0 (\sft(s^*),\sfq(s^*))=0$)  and the plastic constraint
 ($\calW_p\calE_0(\sft(s^*),\sfq(s^*))=0$)
  are fulfilled, then it never leaves that state afterwards, and subsequently $(\sft,\sfq)$ behaves as a $\BVZ$ solution. 
\begin{proof}
\par
\noindent{\bf Step $1$:}
As in the proof of  \cite[Prop.\ 5.5]{MRS14}, we start by analyzing the behavior of a  $\BVZZZ$ solution 
$(\sft,\sfq) $ on an interval $(s_1,s_2) \subset [0,S] \setminus \mathfrak{S}$. Since  $ \DVito^*(\sft(s),\sfq(s))>0$ for all $s\in (s_1, s_2)$,   we read from \eqref{diff-charact-BV-CR-1} and \eqref{diff-charact-BV-CR-3} that $\lambda_{\sfu,\sfp}>0$  on  $ (s_1, s_2)$. Thus, \eqref{switch-BV-CR-1} yields $\sft' \equiv 0$ on 
$ (s_1, s_2)$, so that $\sft(s) \equiv \sft(s_1)$ for all $s\in [s_1, s_2]$. Furthermore,  \eqref{switch-BV-CR-2} 
gives $\lambda_{\sfz} \equiv 1$ on $ (s_1, s_2)$. Combining this with  \eqref{diff-charact-BV-CR-2} we gather that $\sfz' \equiv 0$ on  $ (s_1, s_2)$, so that $\sfz(s) \equiv \sfz(s_1)$ for all $s\in [s_1, s_2]$. 
From \eqref{non-degeneracy} we conclude that
\begin{subequations}
\label{non-degeneracy-conclusions}
\begin{equation}
\label{non-degeneracy-conclusions-1}
\sfu'(s) \neq 0 \text{ or } \sfp'(s) \neq 0 \qquad \foraa\, s \in (s_1,s_2),
\end{equation}
and, therefore,  from \eqref{diff-charact-BV-CR-1} and \eqref{diff-charact-BV-CR-3} we infer that 
$\lambda_{\sfu,\sfp}(s)<1$  for almost all $s\in (s_1,s_2)$. Hence, 
 the evolution of $(\sft,\sfq)$ in $(s_1,s_2)$ is characterized by 
property
\eqref{non-degeneracy-conclusions-1},
 joint with the previously found 
\begin{align}
\label{non-degeneracy-conclusions-2}
&
\sft(s) \equiv \sft(s_1), \ \ \sfz(s) \equiv \sfz(s_1)  && \text{ for all } s\in [s_1, s_2], 
\\
\label{non-degeneracy-conclusions-3}
& - \mathrm{div}\,\bbD \sig{\sfu'(s)}
-  \widehat{\lambda}(s)    \mathrm{Div}\,\serifsigma(s)  = 
 \widehat{\lambda}(s)    F(\sft(s_1)) && \foraa\, s \in (s_1,s_2),  
\\
&
\label{non-degeneracy-conclusions-4}
 \widehat{\lambda}(s)    \zeta(s) +   \sfp'(s) \ni 
 \widehat{\lambda}(s)    \serifsigma_\dev(s)  && \foraa\, s \in (s_1,s_2)\,,
\\
&
 \text{with }   
 \widehat{\lambda}    := \tfrac{1{-}\lambda_{\sfu,\sfp}}{\lambda_{\sfu,\sfp}} = 
\frac{\sqrt{ \|\sfu'\|_{H^1,\bbD}^2 {+} \|\sfp'\|_{L^2(\Omega;\MD)}^2}}{\DVito^*(\sft,\sfq)}
 \nonumber
\end{align}
\end{subequations}
(observe that $\widehat{\lambda} = \tilde{\lambda}^{-1}$ with $\tilde\lambda$ from, \eqref{second-ingredient-CR}). 
\noindent
In turn,  it can be easily checked that, for a \emph{given} function $\widehat{\lambda} : (s_1,s_2) \to [0,+\infty) $,  the Cauchy problem for  system \eqref{non-degeneracy-conclusions-2}---\eqref{non-degeneracy-conclusions-4} does
admit a unique solution.  
\par
\noindent
\textbf{Step $2$:} 
Since the function $[0,S] \ni s\mapsto  \DVito^*(\sft(s),\sfq(s)) $ is lower semicontinuous by 
\cite[Lemma 7.8]{Crismale-Rossi},  
the set $\mathfrak{S}$ is closed, hence its complement $[0,S]\setminus \mathfrak{S}$ is relatively open, and thus it  is the finite or countable union of disjoint intervals. Its connected components are  of the form $(s_1,S]$, 
$(s_2,s_3)$, $[0,s_4)$ or $[0,S]$ (if $\mathfrak{S} = \emptyset$). By the lower semicontinuity of $s\mapsto  \DVito^*(\sft(s),\sfq(s)) $, it is immediate to check that $s_j \in \mathfrak{S}$ for all  $j \in \{1,\ldots, 4\}$. 
\par
 Now, we aim to show that connected components of the type  $(s_1,S]$ and 
$(s_2,s_3)$
cannot occur. To this end, 
let us study the properties of  the $\BVZZZ$ solution 
$(\sft,\sfq) $ on an interval of the type $(s_1,S]$ or $(s_2,s_3)$, with $s_1,\, s_2 \in \mathfrak{S}$. Since
$\DVito^*(\sft(s_j),\sfq(s_j)) = 0$, we have
that
$\calS_u \calE_0(\sft(s_j), \sfq(s_j))= \mathcal{W}_p \calE_0(\sft(s_j),\sfq(s_j))=0$ for $j=1,2$. 
As shown in Step $1$, the evolution on the intervals $(s_1,S)$ and $(s_2,s_3)$ is characterized by 
\eqref{non-degeneracy-conclusions}.   Recall that system \eqref{non-degeneracy-conclusions-2}--\eqref{non-degeneracy-conclusions-4} admits a unique solution.  Now, 
%
 since 
$\calS_u \calE_0(\sft(s_j), \sfq(s_j))= \mathcal{W}_p \calE_0(\sft(s_j),\sfq(s_j))=0$ for $j=1,2$, it is immediate to check that the constant  functions 
$\widehat{\sfu}(s) \equiv \sfu(s_j)$ and $\widehat{\sfp}(s) \equiv \sfp(s_j)$ provide the unique solution to 
 \eqref{non-degeneracy-conclusions-2}--\eqref{non-degeneracy-conclusions-4}. Thus, we conclude that $\BVZZZ$ solution 
$(\sft,\sfq) $ on an interval of the type $(s_1,S]$ or $(s_2,s_3)$ must be constant, which is a contradiction to \eqref{non-degeneracy-conclusions-1}.  
%
%
\par
 Therefore, $[0,S]\setminus \mathfrak{S}$  does not possess connected components of the form $(s_1,S]$ or 
 $(s_2,s_3)$. Hence, either  $\mathfrak{S}=\emptyset$, or $\mathfrak{S} = [s_*,S]$ for some $s_*>0$.  In the latter case,
 the calculations from Step $1$ show that 
  on $[0,S]\setminus \mathfrak{S}  = [0,s_*)$ the evolution of $(\sft,\sfq)$ is characterized by \eqref{transition-layer}.
  \par
\noindent
\textbf{Step $3$:}  Suppose that  $s_*<S$. Clearly, $
 \DVito^*(\sft(s),\sfq(s)) \equiv 0 $  for every $ s\in
\mathfrak S=[s_*,S]$. Hence it satisfies system \eqref{diff-charact-BV-CR}   with $\lambda_{\mathsf{u}, \mathsf{p}} (s) \equiv 0 $ for every $s\in [s_*,S]$, which coincides with system \eqref{differential-chact-CL}. 
This concludes the proof.
\end{proof}

\section{Vanishing-hardening limit of $\BV$ solutions}
\label{s:4}
%
%
 In this section we 
carry out the asymptotic analysis as  the hardening parameter $\mu$ tends to 0 for the 
$\BV $ solutions to system \eqref{RIS-hard-intro}, both in the    \emph{single-rate}  case
(i.e., for $\BV_{0}^{\mu,\nu}$ solutions, with $0<\nu\leq \mu$),
and in the \emph{multi-rate} case (i.e, for
$\BV_{0}^{\mu,0}$ solutions). Indeed, we first address the latter case in Section \ref{sec:lim-double} ahead,
while the former will be sketched in Sec.\ \ref{sec:lim-single}. 
For both analyses, 
we will resort to some technical results collected in the Appendix. 
%
%
%
%
%
\subsection{Vanishing-hardening analysis for \emph{multi-rate} solutions}\label{sec:lim-double}
As recalled in the Introduction,
$\BV$ solutions to the \emph{multi-rate} system with hardening have been constructed   in \cite{Crismale-Rossi} (cf.\ Theorem 6.13 therein) by passing to the limit
in the
 (reparameterized) version of  \eqref{RD-intro} as  the viscosity parameter 
 $\eps $ tends to $ 0$ simultaneously with  the rate parameter $\nu \down 0$,  while the hardening parameter $\mu>0$  stayed fixed.    The  solutions accordingly obtained,  hereafter referred to as $\BVA{\mu}$ solutions,  thus account for multiple rates in the system with hardening. In particular, like for $\BVZZZ$ solutions,  the way in which viscous behavior  in   $u$, $z$, and  $p$
 manifests itself in the jump regime  reflects the fact that 
 the convergence of $u$ and $p$ to 
   elastic equilibrium and rate-independent evolution 
  has    occurred  at a \emph{faster rate} (as $\nu\down 0$) than that for $z$, cf.\ Remark \ref{rmk:comment-below} \ ahead.  
\par
In order to recall the definition of $\BVA{\mu}$ solutions for fixed $\mu>0$, we need to   introduce  the related  vanishing-viscosity contact potential 
\begin{equation*}
\begin{aligned}
&
\Mlinamezero: [0,T]  \times \Qha \times [0,+\infty) \times \Qha \to [0,+\infty],
\\
&
\Mlizero tq{t'}{q'} := 
 \calR(z') + \calH(z,p') + \Mliredzero t u z p {t'}{u'}{z'}{p'}
\end{aligned}
\end{equation*}
where
\begin{subequations}
\label{Mli-def}
\begin{align}
&
\label{Mli>0'}
& 
\text{if } t'>0,  \quad &
 \Mlirednamezero(t,q,t',q') := 
\begin{cases}
 0 &\text{if } 
\begin{cases}
-\mathrm{D}_u \calE_\mu (t,q)=0 ,  \\
-\mathrm{D}_z  \calE_\mu  (t,q) \in \partial\calR(0) , \ \text{and} \\
-\mathrm{D}_p  \calE_\mu (t,q) \in \partial_\pi \calH(z,0),
\end{cases}
 \\
 +\infty & \text{otherwise,}
\end{cases}
\\ &&
\label{Mlizero=0}
\text{if } t'=0,  \quad &
 \Mlirednamezero(t,q,t',q') : =
\begin{cases}
    \DVito(u', p') \, \DVitosred(t,q)   & \text{if } z'=0,
\\
 \|z'\|_{L^2}\, \tilded_{L^2} ({-}\mathrm{D}_z \calE_{ \mu } (t,q),\partial\calR(0)) & \text{if } \DVitosred(t,q)=0
 \\
 &
  \text{ and } \tilded_{L^2}  ({-}\mathrm{D}_z \calE_\mu (t,q),\partial\calR(0))<+\infty,
\\
+ \infty & \text{otherwise}.
\end{cases}
\end{align}
In \eqref{Mli-def} we have  employed the notation
\begin{equation}\label{3006191243}
\begin{aligned}
&
  \DVito(u', p'): =   \sqrt{ \|u'\|_{H^1,\bbD}^2 {+} \|p'\|_{L^2(\Omega;\MD)}^2}, 
\\
&   \DVitosred(t,q)   :=  \sqrt{\|{-}\mathrm{D}_u  \calE_\mu  (t,q)\|^2_{( H^1, \bbD )^*}{+}  d_{L^2}({-}\mathrm{D}_p \calE_\mu (t,q),\partial_\pi \calH(z,0))^2}. 
\end{aligned}
\end{equation} 
\end{subequations}


 We are now in a position to  recall the notion of solution to system  \eqref{RIS-hard-intro} from
\cite[Def. 6.10]{Crismale-Rossi}. Observe that it involves (reparameterized) curves that are absolutely continuous on the \emph{whole}  interval $[0,S]$, with values in  $\Qha$. 
  \begin{definition}
   \label{def:multi-rate-BV-solution-hardening}
   We call a parameterized curve $(\sft,\sfq) = (\sft,\sfu,\sfz,\sfp) \in \AC ([0,S]; [0,T]{\times} \Qha)$
   a  \emph{(parameterized) Balanced Viscosity} solution to the \emph{multi-rate}   system with hardening 
   \eqref{RIS-hard-intro} 
   (a $\BVA{\mu}$ solution, for short), 
    if  $\sft \colon [0,S] \to [0,T]$ is nondecreasing and $(\sft, \sfq)$  fulfills for all $0\leq s \leq S$  the energy-dissipation balance
   \begin{equation}
\label{ED-balance-zero}
   \calE_{ \mu }(\sft(s),\sfq(s)) + \int_{0}^{s}
   \Mlizero{\sft(\tau)}{\sfq(\tau)}{\sft'(\tau)}{\sfq'(\tau)} \dd \tau = \calE_{ \mu } (\sft(0),\sfq(0)) +\int_{0}^{s} \partial_t \calE_{ \mu } (\sft(\tau), \sfq(\tau)) \, \sft'(\tau) \dd \tau .
\end{equation}
We say that $(\sft,\sfq)$ is \emph{non-degenerate} if it fulfills \eqref{non-degeneracy}. 
   \end{definition}
   \begin{remark}
   \label{rmk:comment-below}
   \upshape
   In \cite[Prop.\ 6.11]{Crismale-Rossi} a  characterization was provided  for  $\BVA{\mu}$ solutions in terms of a subdifferential system that features two positive parameters $\lambda_{\sfu,\sfp}$ and $\lambda_{\sfz}$ activating viscous terms in the displacement equation \& plastic flow rule, and in the damage flow rule, respectively.  We refrain from recalling that system because it is completely analogous to
\eqref{diff-charact-BV-CR}
(with the same \emph{switching conditions}   \eqref{switch-BV-CR}).
\par
 Accordingly, repeating the very same arguments as in the proof of Theorem \ref{thm:delusione}, it is  possible to provide an additional characterization of $\BVA{\mu}$ solutions  completely analogous to that from the latter result. In particular,  if a $\BVA{\mu}$ solution $(\sft,\sfq)$ originates from an unstable datum $q_0$ with $\DVitosred(0,q_0)>0$, then, during an initial interval 
 the damage variable $\sfz$ is frozen and  the pair
$(\sfu,\sfp)$ evolves, possibly  in a viscous way. If $(\sft,\sfq)$
 reaches,  at some 
 time $s^*$,  the state in which   elastic equilibrium ($ -\mathrm{D}_u \calE_\mu (\sft,\sfq)=0$)  and stability 
($-\mathrm{D}_p  \calE_\mu (\sft,\sfq) \in \partial_\pi \calH(z,0)$)
  are fulfilled, then it never leaves that state afterwards, and subsequently behaves as a
   Balanced Viscosity solution 
  with viscous behavior in the variable $\sfz$, only (namely, the counterpart, for the system with hardening,  of the $\BVZ$ concept). 
 \par
 We mention in advance that the analogue of  Theorem \ref{thm:delusione} does not hold, instead, for $\BV_0^{\mu,\nu}$ solutions. 
   \end{remark}


We now consider a vanishing sequence $(\mu_k)_k$ and set $\mu=\mu_k$.
By \cite[Theorem 6.13]{Crismale-Rossi},
under the assumptions of Section~\ref{s:2} and \eqref{EL-initial-CL} (cf.\ also Remark \ref{rmk:more-general-init-data} below), 
for any fixed $k$ there exists a parameterized Balanced Viscosity solution
$(\sft_k,\sfq_k)$ in the sense of the previous definition.
Moreover,  for the sequence
$(\sft_k,\sfq_k)_k$ 
 we  may assume   the validity of the following a priori estimates:
\begin{equation}\label{assum:bound}
\begin{aligned}
\exists\, C>0 \ \ \forall\, k \in \N \, \ \foraa\, s \in (0,S)\, : \qquad &  {\sft}_k'(s) {+}  \| \sfu_k'(s)\|_{W^{1,1}(\Omega)} {+} \|\sfz_k'(s)\|_{\Hs(\Omega)} {+} \|\sfp_k'(s)\|_{L^1(\Omega)} {+} \sqrt{\mu_k} \, \|\sfp_k'(s)\|_{L^2(\Omega)} 
\\
& \quad    
 {+} \|\sfe_k'(s)\|_{L^2(\Omega)}  {+}     \DVito(\sfu_k'(s), \sfp_k'(s)) \, \DVito^{*,\mu_k}(\sft_k(s),\sfq_k(s))
\leq C.
 \end{aligned}
\end{equation}
Indeed, 
the existence of a sequence $(\sft_k,\sfq_k)_k$ 
 enjoying the bounds \eqref{assum:bound} follows by time-discretization, 
  cf.\  \cite[Prop.\ 4.4]{Crismale-Rossi}, and by a reparameterization argument. 
  Up to a further time reparameterization we may also assume that the solutions are non-degenerate, cf.\ \eqref{non-degeneracy} and \cite[Remark 6.9]{Crismale-Rossi}.
%
\begin{theorem}\label{teo:exparBVsol}
 Let $(\mu_k)_k$ be a vanishing sequence
and $(\sft_k,\sfq_k)_k$ be a  sequence of 
$\BVA{\mu_k}$ solutions to system \eqref{RIS-hard-intro}, 
such that estimate \eqref{assum:bound} holds.
\par
Then,
there exist a (not relabeled) subsequence 
and  a 
curve  $ (\sft, \sfq)=(\sft, \sfu, \sfz, \sfp) \in  \eadm 0S0T{\Qpp}$ such that   
\begin{enumerate}
\item for all $s\in[0,S]$ the following convergences hold as $k\to+\infty$
\begin{equation}\label{weak-converg-pp}
\begin{split}
&\sft_{k}(s)\to\sft(s) , \quad  \sfu_{k}(s) \weaksto \sfu(s) \text{ in } \BD(\Omega) , 
 \quad  \sfz_{k}(s) \weakto \sfz(s) \text{ in } \Hs(\Omega) ,  \\
& \sfe_k(s)  \weakto \sfe(s) \text{ in } \Lnn,\quad  \sfp_{k}(s) \weaksto \sfp(s) \text{ in } \MbD;
 \end{split}
\end{equation}
\item
 there exists $\overline{C}>0$ such that 
for a.e.\ $s\in (0, S)$ there holds
\begin{equation}\label{2906191858}
\begin{aligned}
&
{\sft}'(s) {+}  \| \sfu'(s)\|_{\BD(\Omega)} {+} \|\sfz'(s)\|_{\Hs(\Omega)} {+} \|\sfp'(s)\|_{\MbD} 
\\
&
 {+} \|\sfe'(s)\|_{\Lnn} 
  {+} \DVito(\sfu'(s), \sfp'(s))\, \DVito^* (\sft(s), \sfq(s))\leq \overline{C};
 \end{aligned}
\end{equation}
 \item
 $(\sft,\sfq)$ is a Balanced Viscosity solution for the perfectly plastic damage system \eqref{RIS-intro}
  in the sense of Definition \ref{def:BV-sol-CR}. 
 \end{enumerate}
\end{theorem}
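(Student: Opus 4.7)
\medskip
\noindent
\textbf{Proof plan.}
The strategy is to first extract a candidate limit curve by compactness, then verify that it lies in the class of enhanced admissible curves, and finally pass to the limit in the (reparameterized) energy--dissipation balance \eqref{ED-balance-zero} to recover \eqref{EnDiss-CR}, appealing at the end to the chain-rule inequality of Lemma \ref{l:ch-rule-pp-CR} (and its Corollary \ref{rmk:upper}) to turn a one-sided inequality into the required identity.

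\emph{Step 1: compactness and pointwise convergence.} From the uniform bounds \eqref{assum:bound} the sequence $(\sft_k)_k$ is uniformly Lipschitz and the sequences $(\sfu_k, \sfz_k, \sfe_k, \sfp_k)_k$ are uniformly Lipschitz from $[0,S]$ into $\BD(\Omega){\times}\Hs(\Omega){\times}\Lnn{\times}\Mb(\Omega;\MD)$ (using the embeddings $W^{1,1}\hookrightarrow \BD$ and $L^1\hookrightarrow \Mb$ for the spatial part, and the bounds on the speeds for the temporal part). Combining a Helly-type selection argument with Ascoli--Arzel\`a and weak/weak$^*$ compactness in reflexive/dual spaces, I extract a subsequence along which \eqref{weak-converg-pp} holds for every $s\in[0,S]$, and such that the weak time-derivatives converge in the sense of $L^\infty$-weak$^*$ topologies inherited from the bounds. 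The kinematic admissibility $\sfe=\rmE(\sfu{+}w(\sft)){-}\sfp$ and the boundary relation in the definition of $\Qpp$ pass to the limit. The bound $\sqrt{\mu_k}\|\sfp_k'(s)\|_{L^2}\le C$ yields $\mu_k\|\sfp_k(s)\|_{L^2}^2 \to 0$ uniformly in $s$, so the hardening contribution in $\calE_{\mu_k}$ disappears and $\calE_{\mu_k}(\sft_k(s),\sfq_k(s))\to \calE_0(\sft(s),\sfq(s))$ at every $s$ (using lower semicontinuity to bound from below and the elementary upper estimate from the uniform bounds).

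\emph{Step 2: enhanced admissibility of the limit.} Conditions \eqref{CL-adm-def-1}--\eqref{CL-adm-def-3} follow from Step 1. For the enhanced properties \eqref{CR-admiss-curve}, I use the product bound $\DVito(\sfu_k',\sfp_k')\,\DVito^{*,\mu_k}(\sft_k,\sfq_k)\le C$ and  lower semicontinuity: on any compact subinterval $J\Subset B^\circ=\{s:\DVito^*(\sft(s),\sfq(s))>0\}$ one has $\DVito^{*,\mu_k}(\sft_k(s),\sfq_k(s))$ bounded below uniformly in $k$ for $k$ large (by the semicontinuity properties of $\calS_u\calE_0$ and $\calW_p\calE_0$, which follow from the duality formulae \eqref{surrogate-1}--\eqref{surrogate-2}; this is where one uses the improved regularity of $\sigma$ on $J$ from Lemma \ref{le:0103222042}-type arguments). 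Consequently $\DVito(\sfu_k',\sfp_k')$ is uniformly bounded in $L^\infty(J)$, giving the uniform $H^1(\Omega;\R^n)\times L^2(\Omega;\MD)$ control on $(\sfu_k',\sfp_k')$ needed to conclude \eqref{2906191815}. Condition \eqref{2906191815+1} ($\sft$ constant on connected components of $B^\circ$) follows from the switching condition $\sft_k'\lambda_{\sfu,\sfp,k}=0$ combined with the fact that, in $B^\circ$, the viscous parameter $\lambda_{\sfu,\sfp,k}$ stays bounded away from $0$ in the limit.

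\emph{Step 3: passing to the limit in the energy--dissipation balance.} Writing \eqref{ED-balance-zero} for $(\sft_k,\sfq_k)$ and fixing $0\le s_1\le s_2\le S$, I pass to the $\liminf$ on the left-hand side and to the limit on the right-hand side. The convergence of $\calE_{\mu_k}$ to $\calE_0$ at every point (Step 1) handles the pointwise terms; the power integral $\int_{s_1}^{s_2}\partial_t\calE_{\mu_k}(\sft_k,\sfq_k)\sft_k'\,\dd r$ converges by dominated convergence together with the explicit formula $\partial_t\calE_\mu(t,q)=-\langle \dot F(t), u{+}w(t)\rangle - \langle F(t),\dot w(t)\rangle + \langle \bbC(z)e(t),\rmE(\dot w(t))\rangle$, whose integrand is continuous in $(q,t)$ and passes to the limit. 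For the dissipation term, I prove the Ioffe-type liminf inequality
\[
\int_{s_1}^{s_2}\oMliname(\sft(r),\sfq(r),\sft'(r),\sfq'(r))\,\dd r \le \liminf_{k\to\infty}\int_{s_1}^{s_2}\Mlinamezero(\sft_k(r),\sfq_k(r),\sft_k'(r),\sfq_k'(r))\,\dd r,
\]
splitting $\Mlinamezero = \calR(\sfz') + \calH(\sfz,\sfp') + \Mlirednamezero$ and treating each piece: the first two are standard convex lower semicontinuous integrands; for the reduced part I use the pointwise liminf inequality for the positively $1$-homogeneous contact potential (cf.\ the structure of $\oMliredname$ in \eqref{2906191736'}--\eqref{oMlizero=0}), whose proof reduces to the lower semicontinuity of the surrogates $\calS_u\calE_0$, $\calW_p\calE_0$ and of $\tilded_{L^2}(\cdot,\partial\calR(0))$ along the weak convergences of Step 1, together with the elementary inequality $ab\le \frac12(a^2+b^2)$ applied to factorize the product $\DVito\cdot\DVito^{*,\mu_k}$ in the jump regime.

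\emph{Step 4: closing the argument.} The inequality obtained in Step 3 gives the energy--dissipation balance \eqref{EnDiss-CR} as a $\le$ inequality. By Corollary \ref{rmk:upper}, this alone characterizes $\BVZZZ$ solutions, provided the chain-rule inequality \eqref{desired-chain-rule-pp-CR} holds for $(\sft,\sfq)$; the latter is exactly the content of Lemma \ref{l:ch-rule-pp-CR}, whose hypotheses have been checked in Step 2. Hence $(\sft,\sfq)$ is a $\BVZZZ$ solution. The uniform estimate \eqref{2906191858} is obtained by passing to the $\liminf$ in \eqref{assum:bound} using the weak lower semicontinuity of the relevant norms and of $\DVito^*$. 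The main obstacle I foresee is proving the liminf inequality for the reduced contact potential in Step 3, because $\Mlirednamezero$ is a non-convex, $1$-homogeneous functional of $(t',q')$ whose defining pieces switch between qualitatively different regimes according to the signs of $t'$, $z'$ and $\DVito^{*,\mu_k}$; this is handled by a careful case distinction following the blueprint of \cite[Sec.\ 6--7]{Crismale-Rossi} and by exploiting the switching conditions derived in Remark \ref{rmk:comment-below} at the level of the approximating curves.
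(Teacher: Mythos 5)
Your overall architecture coincides with the paper's (compactness from \eqref{assum:bound}, enhanced admissibility via the product bound $\DVito(\sfu_k',\sfp_k')\,\DVito^{*,\mu_k}(\sft_k,\sfq_k)\le C$, a liminf passage in the energy--dissipation balance, and closure through Corollary \ref{rmk:upper} and the chain rule). However, there is a genuine gap precisely at the step you yourself flag as the main obstacle: the liminf inequality for the reduced contact potentials. First, a ``pointwise liminf inequality'' in $s$ is not available here, because the rates $(\sft_k',\sfq_k')$ converge only weakly in time; passing the reduced integrand to the limit requires convexity in the rates together with an integral lower-semicontinuity result of Ioffe type (this is exactly Lemma \ref{l:MIE-ROS} in the paper, fed with the $\Gamma$-liminf estimate supplied by Lemma \ref{le:2307191000}), and although you name an ``Ioffe-type'' inequality, what you then describe is a pointwise argument. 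Second, and more seriously, your plan does not handle the regime $\sft'(s)>0$: there the limit integrand $\oMliredname$ is an indicator, equal to $+\infty$ unless $\calS_u\calE_0=\calW_p\calE_0=0$ and $-\rmD_z\calE_0\in\partial\calR(0)$ at $(\sft(s),\sfq(s))$, so one must \emph{prove} that these constraints hold a.e.\ on $A=\{\sft'>0\}$. This cannot be read off from the approximants evaluated at the same $s$: since $\sft_k'(s)$ does not converge pointwise to $\sft'(s)$, the curves $(\sft_k,\sfq_k)$ may well be in their jump regime at $s$, where $\Mlirednamezero$ is finite without any constraint being satisfied, and a pointwise liminf inequality is then simply false. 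The paper's remedy is the ad hoc Lemma \ref{lemma:giu}: for a.a.\ $s\in A$ one finds nearby points $s_{k_j}\to s$ with $\sft_{k_j}'(s_{k_j})>0$, where the $\mu_{k_j}$-constraints must hold, and then Lemma \ref{le:2307191000} transfers them to the limit. Nothing in your proposal plays this role, and the appeal to ``switching conditions at the level of the approximating curves'' does not substitute for it.

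Two further remarks. The inequality $ab\le\tfrac12(a^2+b^2)$ has no useful role here: the approximating reduced potential $\Mlirednamezero$ in the jump regime is \emph{already} a product $\DVito(u_k',p_k')\,\DVito^{*,\mu_k}(t_k,q_k)$ (or $\|z_k'\|_{L^2}\tilded_{L^2}(\cdot,\partial\calR(0))$), not a viscous sum of squares, so there is nothing to ``factorize''; what is needed is lower semicontinuity of the product, obtained from separate lower semicontinuity of the two factors within the integral-functional framework of Lemma \ref{l:MIE-ROS}. Finally, your claimed convergence $\calE_{\mu_k}(\sft_k(s),\sfq_k(s))\to\calE_0(\sft(s),\sfq(s))$ at \emph{every} $s$ is not justified (only the liminf inequality is available a priori, since $\sfe_k(s)$ converges merely weakly in $L^2$); this is harmless provided you take $s_1=0$ in the balance, where convergence of the energies holds because the initial data are fixed and $\mu_k p_0\to0$, which is what the paper does, but as written your Step 3 for arbitrary $s_1$ rests on an unproven limsup estimate at $s_1$.
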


\begin{remark}
\label{rmk:more-general-init-data}
\upshape
The validity of Theorem \ref{teo:exparBVsol} extends to sequences $(\sft_k,\sfq_k)_k$ originating from initial data  
%
 $q_0^k = (u_0^k,z_0^k,p_0^k)_k$ fulfilling  
\[
\|u_0^k\|_{H_\Dir^1(\Omega)} + \|z_0^k\|_{\Hs(\Omega)}+  \| W(z_0^k) \|_{L^1(\Omega)}  + \|p_0^k\|_{L^2(\Omega)}  + \| \rmD_q  \enen{0} (0,q_0^k) \|_{L^2(\Omega)}   \le C
\]
with $C$ independent of $k$. 

In particular, the last condition yields  $\mu_k \, p_0^k \to 0$ in $L^2(\Omega;\MD)$, as needed  for \eqref{convergence-of-energies} below. 

\end{remark}
\begin{proof}  The proof is divided in two steps.
\paragraph{\bf Step $1$: Compactness.} 
For later use, we observe that, due to estimate  \eqref{assum:bound},
\begin{equation}
\label{they-stay-in-ball}
\exists\, R>0 \ \ \forall\, k \in \N \ \ \forall\, s \in [0,S]\, : \quad 
\|\sfu_k(s)\|_{\BD(\Omega)} + \|\sfz_k(s)\|_{\Hs(\Omega)}  + \| \sfp_k(s)\|_{\mathrm{M}_{\mathrm{b}}(\Omega{\cup}\Gdir)} \leq R.
\end{equation}
By the assumptions on initial data and external loading and  by \eqref{assum:bound}, we have 
$\sup_{s\in [0,S]}\calE_{ \mu_k }(\sft_k(s),\sfq_k(s))\le C$   for a constant independent of $k$. 
In particular, this implies that, as $k\to\infty$, 
\begin{equation}\label{2307191104}
{\mu_k} \, \sfp_k(s) \to 0\quad\text{in }L^2(\Omega;\MD)
\text{ for every $s \in [0,S]$.}
\end{equation}
\par
In view of   \eqref{assum:bound},
we find a (not relabeled)  subsequence and a Lipschitz  curve
$(\sft,\sfq) \in $ $  W^{1,\infty} ([0,S];[0,T]{\times}\Qpp)$ such that the following convergences hold as $k\to\infty$
  \begin{subequations}\label{2906191911}
\begin{align}
& \sft_{k}\weaksto\sft  \quad \text{in } W^{1,\infty}(0,S;[0,T]) , && \sfu_{k}\weaksto\sfu \quad \text{in } W^{1,\infty}(0,S;\BD(\Omega)) ,
 \\
&  \sfz_{k}\weaksto\sfz \quad \text{in } W^{1,\infty}(0,S;\Hs(\Omega)) , &&
\\
&\sfe_{k}\weaksto\sfe \quad \text{in } W^{1,\infty}(0,S;\Lnn), &&
\sfp_{k}\weaksto\sfp \quad \text{in } W^{1,\infty}(0,S;\MbD) ,
\end{align}
\end{subequations}
where $\sfe = \sig{\sfu{+}w(\sft)} - \sfp$. 
Furthermore, 
an argument based on the Ascoli-Arzel\`a theorem  (cf.\ \cite[Prop.\ 3.3.1]{AGS08}) also yields
\begin{subequations}
\label{Co-cvg}
\begin{align}
&
\sfu_{k} \to \sfu   && \text{in } \mathrm{C}^0([0,S]; \BD(\Omega)_{\mathrm{w}^*}), \label{2204071}
\\
&
\sfe_{k} \to \sfe   && \text{in }   \mathrm{C}^0([0,S]; L^2(\Omega;\Mnn)_{\mathrm{w}}),
\\
&
\sfz_{k} \to \sfz   && \text{in }   \mathrm{C}^0([0,S]; \Hs(\Omega)_{\mathrm{w}}),
\\
&
\sfp_{k}\to\sfp  &&  \text{in } \mathrm{C}^0([0,S]; \MbD_{\mathrm{w}^*}).  \label{2204072}
\end{align}
\end{subequations}
Indeed, 
convergences \eqref{2204071} and \eqref{2204072}
are to be intended in the spaces
$ \mathrm{C}^0([0,S]; (\BD(\Omega), d_{\BD,\mathrm{weak}^*}))$ and in 
$ \mathrm{C}^0([0,S]; (\MbD, d_{\mathrm{M}_\mathrm{b},\mathrm{weak}^*}))$, 
where $d_{\BD,\mathrm{weak}^*}$ and $d_{\mathrm{M}_\mathrm{b},\mathrm{weak}^*}$ metrize the weak$^*$ topologies of $\BD(\Omega)$ and $\MbD$, respectively, on the balls of radius $R$ that contain $(\sfu_k)_k$ and $(\sfp_k)_k$, resp.\ (cf.\ 
\eqref{they-stay-in-ball}; here we use that $\BD(\Omega)$ is the dual of a separable space).
The second and third convergences have an analogous meaning. 
Hence, \eqref{weak-converg-pp} follows. 
\par
With the very same arguments as in the proof of \cite[Prop.\ 7.9]{Crismale-Rossi}, 
based on the estimate 
\[
\exists\, C>0 \  \forall\, k \in \N \ \foraa\, s\in (0,S)\, : \   \DVito(\sfu_k'(s), \sfp_k'(s)) \, \DVito^{*,\mu_k}(\sft_k(s),\sfq_k(s))\leq C,
\]
we also prove the enhanced regularity
$(\sft,\sfq) \in  \eadm 0S0T{\Qpp}$.  
\par
\paragraph{\bf Step $2$:  energy-dissipation upper estimate.} 
By  Corollary  \ref{rmk:upper}, it is sufficient to show
 that the pair $(\sft,\sfq)$ complies with 
 the energy-dissipation inequality
\begin{equation*}
\calE_0(\sft(s),\sfq(s)) + \int_{0}^{s} \calM^{0,0}_0(\sft(r),\sfq(r),\sft'(r),\sfq'(r))
 \dd r \le  \calE_0(\sft(0),\sfq(0)) + \int_{0}^{s} \partial_t \calE_0(\sft(\tau), \sfq(\tau)) \dd \tau
   \end{equation*}
   for every $s\in [0,S]$. 
We start from \eqref{ED-balance-zero} for the solution $(\sft_k,\sfq_k)$ with $\mu=\mu_k$.
It is straightforward to see that
\begin{equation}
\label{convergence-of-energies}
\calE_0(\sft(s),\sfq(s))\le \liminf_{k\to+\infty}\calE_{ \mu_k }(\sft(s),\sfq(s)) , \qquad
\calE_0(\sft(0),\sfq(0))=\lim_{k\to+\infty}\calE_{ \mu_k }(\sft(0),\sfq(0)) ,
\end{equation}
and
\[
\int_{0}^{s} \partial_t \calE_0(\sft(\tau), \sfq(\tau)) \dd \tau=\lim_{k\to+\infty}
\int_{0}^{s} \partial_t \calE_{\mu_k}(\sft_k(\tau), \sfq_k(\tau)) \dd \tau .
\]
It remains to show that
\begin{equation} \label{eq:sci-giu}
 \int_{0}^{s} \calM^{0,0}_0(\sft(r),\sfq(r),\sft'(r),\sfq'(r)) \dd r \leq 
  \liminf_{k\to+\infty}
  \int_{0}^{s}  \calM_{0}^{\mu_k,0}(\sft_{k}(r), \sfq_{k}(r), \sft_{k}'(r), \sfq'_{k}(r)) \dd r   .
\end{equation} 
In fact, it will be sufficient to obtain the above estimate only for the reduced functionals 
$\calM_{0,\mathrm{red}}^{0,0}$ and $ \calM_{0,\mathrm{red}}^{\mu_k,0}$. 
 In view of \eqref{M0-CR} we distinguish two cases.
Let $A: = \{ s\in [0,S]\,: \ \sft'(s)>0\}$. 
\paragraph{\bf Case $\sft'>0$.} 
We prove that the function  
$s \mapsto \oMliredname(\sft(s), \sfq(s), \sft'(s), \sfq'(s))$ is finite for a.a.\ $s\in A$.
We  apply  Lemma \ref{lemma:giu} ahead with the choices $\mathsf{f}_k: = \sft_k$, $\mathsf{f}: = \sft$. Thus, we conclude that  
 for a.a.\ $s\in A$ there is a subsequence $(k_j)_{j}$
and, for every $j$, there is $s_{k_j}\in(0,S)$ such that
$|s_{k_j}{-}s|<\frac1j$ and $\sft'_{k_j}(s_{k_j})>0$.
In particular, $\calM_{0}^{\mu_k,0}(\sft_{k_j}(s_{k_j}), \sfq_{k_j}(s_{k_j}), \sft_{k_j}'(s_{k_j}), \sfq'_{k_j}(s_{k_j}))=0$.
By \eqref{M0-CR},  \eqref{2307191104},  convergences \eqref{Co-cvg}, 
and Lemma \ref{le:2307191000} ahead, we obtain
\begin{equation*}
\begin{aligned}
 \calS_u \calE_0(\sft(s), \sfq(s)) =0 , \qquad 
-\mathrm{D}_z \calE_0 (\sft(s),\sfq(s)) \in \partial\calR(0) , \qquad 
\mathcal{W}_p \calE_0(\sft(s),\sfq(s))=0
\quad
 \text{for a.a.}\ s\in A,
\end{aligned}
\end{equation*} 
which is equivalent to state that $\oMliredname(\sft(s), \sfq(s), \sft'(s), \sfq'(s))=0$.
Hence,
 we obviously have  the \emph{pointwise} estimate 
\begin{equation}
\label{pointwise-endiss}
  \oMliredname(\sft(s),\sfq(s),\sft'(s),\sfq'(s))  \leq 
  \liminf_{k\to+\infty}
 \calM_{0,\mathrm{red}}^{\mu_k,0}(\sft_{k}(s), \sfq_{k}(s), \sft_{k}'(s), \sfq'_{k}(s)) \qquad \foraa\, s \in A.
 \end{equation}
 
\paragraph{\bf Case $\sft'=0$.}  By virtue of Lemma \ref{le:2307191000},  
we are in a position to  apply Lemma \ref{l:MIE-ROS} below
in the context of the space $\boldsymbol{Q}: = \Qpp$, with $\boldsymbol{S}$ the ball of radius $R$ from \eqref{they-stay-in-ball}, 
 to the functionals 
$\mathscr{M}_k: = \calM_{0,\mathrm{red}}^{\mu_k,0}$ and $\mathscr{M}_0: = \calM_{0,\mathrm{red}}^{0,0}$,
 with $\calM_{0,\mathrm{red}}^{\mu_k,0}$  extended to  $\R \times (\Qpp{\setminus} \Qha) \times \R \times (\Qpp{\setminus} \Qha)$ by setting  $\calM_{0,\mathrm{red}}^{\mu_k,0}(t,q,t',q')=+\infty$ when $q$ or $q' \in \Qpp{\setminus} \Qha$. 
We may then observe that 
the $\Gamma$-$\liminf$ estimate \eqref{G-liminf} in Lemma  \ref{l:MIE-ROS}   follows from Lemma \ref{le:2307191000}. 
Thus, we conclude
\[
\int_{(0,s) \setminus A}  \calM_{0,\mathrm{red}}^{0,0}(\sft(r),\sfq(r), 0 ,\sfq'(r)) \dd r   \leq 
  \liminf_{k\to+\infty}
\int_{(0,s) \setminus A} \calM_{0,\mathrm{red}}^{\mu_k,0}(\sft_{k}(r), \sfq_{k}(r), \sft_{k}'(r), \sfq'_{k}(r)) \dd r.
\]
All in all, \eqref{eq:sci-giu} follows. 
\par
This finishes the proof.  
\end{proof}

\subsection{Vanishing-hardening analysis for  \emph{single-rate} solutions}\label{sec:lim-single} $\BV$ solutions to the \emph{single-rate} system with hardening have been  obtained in \cite[Section~6.1]{Crismale-Rossi}  by 
 performing the asymptotic analysis of the reparameterized energy-dissipation balance
\eqref{param-endissbal} as  the viscosity parameter $\eps$ tends to  $0$, while keeping the hardening and the rate parameters $\mu$ and  $\nu$ fixed. That is why we  refer to them as   $\BVB{\mu,\nu}$ solutions to the system with hardening. 
Their definition involves the 
corresponding
  vanishing-viscosity contact potential   $\Mliname:  [0,T]  \times \Qha \times [0,+\infty) \times \Qha \to [0,+\infty] $ given by
   \begin{subequations}
\label{Mli-def'}
\begin{align}
&
\Mli tq{t'}{q'}
: = \calR(z') + \calH(z,p') +  \Mliredname(t,q,t',q') , \quad \text{where} 
\nonumber
\\
&
\label{Mli>0}
\begin{aligned}
& 
\text{if } t'>0,  \quad 
 \Mliredname(t,q,t',q') := 
\begin{cases}
 0 &\text{if } 
\begin{cases}
-\mathrm{D}_u \calE_\mu (t,q)=0 ,  \\
-\mathrm{D}_z  \calE_\mu  (t,q) \in \partial\calR(0) , \ \text{and} \\
-\mathrm{D}_p  \calE_\mu (t,q) \in \partial_\pi \calH(z,0),
\end{cases}
 \\
 +\infty & \text{otherwise,}
\end{cases}
\end{aligned}
\\
& 
\label{Mli=0}
\begin{aligned}
&
\text{if } t'=0,  \quad 
 \Mliredname(t,q,0,q') : = \DVito_\nu(q') \, \DVitos(t,q).
\end{aligned}
\end{align}
\end{subequations}
 For better readability, we also recall that 
\[
\begin{aligned}
  & 
   \DVito_\nu({q}'): = 
    \sqrt{ \nu \| {u}'(t)\|^2_{ H^1, \bbD }   {+} 
\|{z}'(t)\|_{L^2}^2
{+} \nu\|{p}'(t)\|_{L^2}^2} ,
\\
&
 \DVitos({t},{q}): =   \sqrt{
\frac1{\nu}\, \|{-}\mathrm{D}_u \calE_\mu ({t},{q} )\|^2_{( H^1, \bbD )^*} 
+ \tilded_{L^2} ({-}\mathrm{D}_z \calE_\mu  ({t},{q}) ,\partial\calR(0))^2
+ \frac1\nu \, \dLtwo ({-}\mathrm{D}_p \calE_\mu  ({t},{q}) ,\partial_\pi \calH( {z} ,0))^2
}.
\end{aligned}
\]
 It is worthwhile to remark that the \emph{reduced} functional  $ \Mliredname$, at $t'=0$, \emph{simultaneously} encompasses viscosity for the three variables $u$, $z$, and $p$. Instead,
  its counterpart 
 $ \Mlirednamezero$ for $\BVA{\mu}$ solutions features, in the jump regime $t'=0$,  a viscous contribution in the variables $(u,p)$ when $z'=0$, and viscosity in $z$ when  $ \DVitosred(t,q)=0$, i.e. when $u$ is at elastic equilibrium and $p$ is locally stable.
 \par
 We are now in a position to recall 
 the notion of $\BVB{\mu,\nu}$ solution,  cf.\ \cite[Definition~6.2]{Crismale-Rossi}. 
 \begin{definition}
   \label{def:single-rate-BV-solution-hardening}
   We call a parameterized curve $(\sft,\sfq) = (\sft,\sfu,\sfz,\sfp) \in \AC ([0,S]; [0,T]{\times} \Qha)$
   a  \emph{(parameterized) Balanced Viscosity} solution to the \emph{single-rate}   system with hardening 
    \eqref{RIS-hard-intro} 
   (a $\BVB{\mu,\nu}$ solution, for short), 
    if  $\sft \colon [0,S] \to [0,T]$ is nondecreasing and $(\sft, \sfq)$  fulfills for all $0\leq s \leq S$  the energy-dissipation balance
   \begin{equation}
\label{ED-balance-zero-single}
   \calE_{ \mu }(\sft(s),\sfq(s)) + \int_{0}^{s}
   \Mli{\sft(\tau)}{\sfq(\tau)}{\sft'(\tau)}{\sfq'(\tau)} \dd \tau = \calE_{ \mu } (\sft(0),\sfq(0)) +\int_{0}^{s} \partial_t \calE_{ \mu } (\sft(\tau), \sfq(\tau)) \, \sft'(\tau) \dd \tau .
\end{equation}
We say that $(\sft,\sfq)$ is \emph{non-degenerate} if it fulfills \eqref{non-degeneracy}. 
   \end{definition}
 \par
 Let us now address the asymptotic analysis of the above solutions for 
a   vanishing sequence $(\mu_k)_k$.
   As mentioned in the Introduction, in the construction of $\BVB{\mu,\nu}$ solutions 
   the rate parameter is always supposed  smaller than the hardening parameter, 
   which forces us to also consider  a sequence $(\nu_k)_k$ such that 
    $\nu_k\leq \mu_k$ for all $k\in \N$, so that  $\nu_k\to 0$ as well.  In fact, the
   technical
     condition $\nu_k \leq \mu_k$ comes into play in the proof of 
      \cite[Prop.\ 4.4]{Crismale-Rossi}. The latter result and
      \cite[Theorem~6.8]{Crismale-Rossi}  ensure the existence of $\BVB{\mu_k,\nu_k}$
      solutions
      $(\sft_k,\sfq_k)_k$ enjoying the following a priori estimates
      \begin{equation}\label{assum:bound-bis}
\begin{aligned}
\exists\, C>0 \ \ \forall\, k \in \N \, \ \foraa\, s \in (0,S)\, : \qquad &  {\sft}_k'(s) {+}  \| \sfu_k'(s)\|_{W^{1,1}(\Omega)} {+} \|\sfz_k'(s)\|_{\Hs(\Omega)} {+} \|\sfp_k'(s)\|_{L^1(\Omega)} {+} \sqrt{\mu_k} \, \|\sfp_k'(s)\|_{L^2(\Omega)} 
\\
& \quad    
 {+} \|\sfe_k'(s)\|_{L^2(\Omega)}  {+}     \DVito_{\nu_k}(\sfu_k'(s), \sfp_k'(s)) \, \DVito_{\nu_k}^{*,\mu_k}(\sft_k(s),\sfq_k(s))
\leq C
 \end{aligned}
\end{equation}
 (and, up to a reparametrization, the non-degeneracy condition).
 \par
 In Theorem \ref{teo:exparBVsolsingle} below we are going to show that, as  the hardening and rate parameters
 $\mu_k$ and $\nu_k$ vanish, (up to a subsequence)  
 $\BVB{\mu_k,\nu_k}$ solutions converge to a  $\BVZZZ$ solution of system \eqref{RIS-intro}. 
   \begin{theorem}\label{teo:exparBVsolsingle}
Let $(\mu_k)_k,\, (\nu_k)_k$ be two vanishing sequences,
and let $(\sft_k,\sfq_k)_k$ be a sequence of 
$\BVB{\mu_k, \nu_k}$ solutions to system \eqref{RIS-hard-intro}
such that estimate \eqref{assum:bound-bis} holds.
\par
Then,
there exist a (not relabeled) subsequence 
and  a 
curve $ (\sft, \sfq)=(\sft, \sfu, \sfz, \sfp) \in  \eadm 0S0T{\Qpp}$ such that 
items (1), (2), (3) of the statement of  Theorem~\ref{teo:exparBVsol} hold.
\end{theorem}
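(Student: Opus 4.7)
The plan is to mirror the two-step strategy employed in the proof of Theorem \ref{teo:exparBVsol}: a compactness step yielding a limit curve $(\sft,\sfq)\in\eadm 0S0T{\Qpp}$, followed by a $\liminf$ estimate for the reparameterized dissipation integrals, after which Corollary \ref{rmk:upper} will conclude. The crucial observation at the outset is that \eqref{assum:bound-bis} differs from \eqref{assum:bound} only in the last summand, where $\DVito(\sfu_k',\sfp_k')\,\DVito^{*,\mu_k}(\sft_k,\sfq_k)$ is replaced by the single-rate product $\DVito_{\nu_k}(\sfu_k',\sfp_k')\,\DVito_{\nu_k}^{*,\mu_k}(\sft_k,\sfq_k)$. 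Pairing $\nu_k\|\sfu_k'\|_{H^1,\bbD}^2$ with $\tfrac{1}{\nu_k}\|{-}\rmD_u\calE_{\mu_k}\|_{(H^1,\bbD)^*}^2$, and similarly for the $p$-components, the Cauchy--Schwarz inequality $\sum_i\sqrt{a_ic_i}\le\sqrt{\sum a_i}\sqrt{\sum c_i}$ yields
\begin{equation*}
\DVito_{\nu_k}(\sfu_k',\sfp_k')\,\DVito_{\nu_k}^{*,\mu_k}(\sft_k,\sfq_k) \;\ge\; \DVito(\sfu_k',\sfp_k')\cdot \mathfrak{D}^{*,\mu_k}(\sft_k,\sfq_k),
\end{equation*}
where I denote $\mathfrak{D}^{*,\mu_k}(t,q)^2 := \|{-}\rmD_u\calE_{\mu_k}(t,q)\|_{(H^1,\bbD)^*}^2 + d_{L^2}({-}\rmD_p\calE_{\mu_k}(t,q),\partial_\pi\calH(z,0))^2$. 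Hence the multi-rate-type a priori estimate underlying Theorem \ref{teo:exparBVsol} transfers to our sequence, so the compactness argument of that theorem applies verbatim: Ascoli--Arzel\`a combined with weak$^*$ compactness in $W^{1,\infty}$ produces convergences \eqref{weak-converg-pp}--\eqref{Co-cvg} to a Lipschitz limit $(\sft,\sfq)\in \eadm 0S0T{\Qpp}$ fulfilling \eqref{2906191858}.

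For the energy-dissipation upper estimate one argues as in Theorem \ref{teo:exparBVsol} that $\calE_0(\sft(s),\sfq(s))\le\liminf_k\calE_{\mu_k}(\sft_k(s),\sfq_k(s))$ thanks to \eqref{2307191104}, and that the power integral converges; what remains is the liminf estimate
\begin{equation*}
\int_0^s \oMliredname(\sft,\sfq,\sft',\sfq')\,\dd r \;\le\; \liminf_{k\to\infty}\int_0^s \Mliredname(\sft_k,\sfq_k,\sft_k',\sfq_k')\,\dd r.
\end{equation*}
On the set $A=\{\sft'>0\}$ one applies Lemma \ref{lemma:giu} to extract, for almost every $s\in A$, points $s_{k_j}$ with $\sft_{k_j}'(s_{k_j})>0$: then $\Mliredname$ vanishes at those points by \eqref{Mli>0}, forcing the equilibrium/stability conditions, and Lemma \ref{le:2307191000} (together with $\mu_k\sfp_k\to 0$) transfers these to $\calS_u\calE_0(\sft,\sfq)=\calW_p\calE_0(\sft,\sfq)=0$ and $-\rmD_z\calE_0(\sft,\sfq)\in\partial\calR(0)$ a.e.\ on $A$. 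Consequently $\oMliredname=0$ a.e.\ on $A$, and the pointwise inequality is trivial there.

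The main obstacle lies in the complementary region $\{\sft'=0\}$, where one must recover the dichotomy built into \eqref{oMlizero=0}. This rests on the exact algebraic identity
\begin{equation*}
\DVito_{\nu_k}^2 \,(\DVito_{\nu_k}^{*,\mu_k})^2 \;=\; \DVito(\sfu_k',\sfp_k')^2\,(\mathfrak{D}^{*,\mu_k})^2 + \nu_k\DVito(\sfu_k',\sfp_k')^2\,\tilded^{\,2} + \tfrac{1}{\nu_k}\|\sfz_k'\|_{L^2}^2\,(\mathfrak{D}^{*,\mu_k})^2 + \|\sfz_k'\|_{L^2}^2\,\tilded^{\,2},
\end{equation*}
with $\tilded$ short-hand for $\tilded_{L^2}({-}\rmD_z\calE_{\mu_k}(\sft_k,\sfq_k),\partial\calR(0))$. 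Since the left-hand side is uniformly bounded and $\nu_k\to 0$, the third summand forces $\|\sfz_k'\|_{L^2}\cdot\mathfrak{D}^{*,\mu_k}(\sft_k,\sfq_k)\to 0$ a.e.\ (up to a further subsequence), so at almost every limit point one of $\sfz'=0$ or $\DVito^*(\sft,\sfq)=0$ must hold. In the first regime only the first and fourth summands survive, and combined with the lower bound $\DVito_{\nu_k}\DVito_{\nu_k}^{*,\mu_k}\ge \DVito(\sfu_k',\sfp_k')\cdot\mathfrak{D}^{*,\mu_k}$ together with Lemma \ref{le:2307191000}, they yield the $\DVito(\sfu',\sfp')\,\DVito^*(\sft,\sfq)$ branch of $\oMliredname$; in the second regime the Cauchy--Schwarz lower bound $\DVito_{\nu_k}\DVito_{\nu_k}^{*,\mu_k}\ge \|\sfz_k'\|_{L^2}\,\tilded$ produces the $\|\sfz'\|_{L^2}\,\tilded_{L^2}({-}\rmD_z\calE_0,\partial\calR(0))$ branch. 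Packaging this pointwise dichotomy into an integral $\liminf$, through a Mosco/$\Gamma$-$\liminf$ argument in the spirit of Lemma \ref{l:MIE-ROS} and handling the measurable selection of the regime on the complement of $A$, is the technical crux; once granted, \eqref{EnDiss-CR} follows as an inequality and Corollary \ref{rmk:upper} concludes.
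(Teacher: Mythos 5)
Your Step 1 and the treatment of the set $A=\{\sft'>0\}$ are sound and essentially the paper's: the observation that $\DVito_{\nu_k}(\sfq_k')\,\DVitoskk(\sft_k,\sfq_k)\ge \DVito(\sfu_k',\sfp_k')\,\DVito^{*,\mu_k}(\sft_k,\sfq_k)$ lets the compactness argument of Theorem \ref{teo:exparBVsol} run unchanged, and on $A$ the functional $\calM_{0,\mathrm{red}}^{\mu_k,\nu_k}$ coincides with $\calM_{0,\mathrm{red}}^{\mu_k,0}$ for $t'>0$, so Lemma \ref{lemma:giu} and Lemma \ref{le:2307191000} give $\oMliredname(\sft,\sfq,\sft',\sfq')=0$ a.e.\ on $A$ exactly as in that proof.

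The gap is in the region $\{\sft'=0\}$, which is the heart of the theorem and which you explicitly leave ``once granted''. Two problems. First, your ``pointwise dichotomy'' is not justified as stated: from \eqref{assum:bound-bis} you do get $\|\sfz_k'(s)\|_{L^2}\,\DVito^{*,\mu_k}(\sft_k(s),\sfq_k(s))\le C\sqrt{\nu_k}\to0$ for a.a.\ $s$, but $\sfz_k'(s)$ converges to $\sfz'(s)$ only in the weak, time-integrated sense coming from \eqref{2906191911}, so this pointwise-in-$s$ information does not transfer to $\|\sfz'(s)\|_{L^2}\,\DVito^*(\sft(s),\sfq(s))=0$ without a further integral lower-semicontinuity argument of exactly the kind you are postponing. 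Second, and more importantly, the step you defer is precisely what the paper proves: it verifies the pointwise $\Gamma$-liminf hypothesis \eqref{G-liminf} of Lemma \ref{l:MIE-ROS} for \emph{arbitrary} weakly$^*$ converging sequences $(t_k,q_k,t_k',q_k')\weaksto(t,q,0,q')$, with $\mathscr{M}_k=\calM_{0,\mathrm{red}}^{\mu_k,\nu_k}$ and $\mathscr{M}_0=\calM_{0,\mathrm{red}}^{0,0}$. There the regimes $z'=0$ and $\DVito^*(t,q)=0$ are handled by your two ``neglect terms'' lower bounds together with Lemma \ref{le:2307191000}, but the remaining regime $\|z'\|_{L^2}\DVito^*(t,q)>0$ is \emph{not} excluded (no a priori bound is available for arbitrary sequences); instead one uses $\calM_{0,\mathrm{red}}^{\mu_k,\nu_k}(t_k,q_k,t_k',q_k')\ge \tfrac{1}{\sqrt{\nu_k}}\,\|z_k'\|_{L^2}\,\DVito^{*,\mu_k}(t_k,q_k)$ and the fact that $\|z_k'\|_{L^2}\DVito^{*,\mu_k}(t_k,q_k)\ge c>0$ eventually, so the liminf is $+\infty$ and matches $\calM_{0,\mathrm{red}}^{0,0}=+\infty$. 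Only after this blanket $\Gamma$-liminf is established can Lemma \ref{l:MIE-ROS} be invoked to pass from the pointwise statement to the integral inequality \eqref{0402221310}; this verification, and not the compactness or the $A$-case, is the actual content of the proof, and it is missing from your proposal.
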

\begin{proof}
The argument is split in the same steps as the proof of Theorem~\ref{teo:exparBVsol}.
\par
\paragraph{\bf Step $1$: Compactness.} With minor changes,
 from estimate \eqref{assum:bound-bis}
 we derive estimate \eqref{they-stay-in-ball} and 
 convergences \eqref{2906191911} and \eqref{Co-cvg}, whence convergences \eqref{weak-converg-pp}, for the sequence of  $\BVB{\mu_k, \nu_k}$ solutions. 
 Analogously, for the limiting curve $(\sft, \sfq)$ estimate  \eqref{2906191858} holds.
\paragraph{\bf Step $2a$:  energy-dissipation upper estimate when $\sft'>0$} The analogue of 
\eqref{pointwise-endiss} at all $s\in A: = \{ s\in [0,S]\,: \ \sft'(s)>0\}$ can 
be obtained in the same way as in the proof of Theorem~\ref{teo:exparBVsol}, taking into account that
\[
\calM_{0,\mathrm{red}}^{\mu_k,0}(t,q,t',q') = \calM_{0,\mathrm{red}}^{\mu_k,\nu_k}(t,q,t',q') \qquad \text{whenever }t'>0.
\]
 \paragraph{\bf Step $2b$:  energy-dissipation upper estimate when $\sft'=0$}  We will now show that 
\begin{equation}\label{0402221310}
\int_{(0,s) \setminus A}  \calM_{0,\mathrm{red}}^{0,0}(\sft(r),\sfq(r), 0,\sfq'(r)) \dd r   \leq 
  \liminf_{k\to+\infty}
\int_{(0,s) \setminus A} \calM_{0,\mathrm{red}}^{\mu_k,\nu_k}(\sft_{k}(r), \sfq_{k}(r), \sft_{k}'(r), \sfq'_{k}(r)) \dd r.
\end{equation}
 As in the proof  of Thm.\ \ref{teo:exparBVsol}, we  will  apply
 Lemma \ref{l:MIE-ROS} below
in the context of the space $\boldsymbol{Q}: = \Qpp$, with $\boldsymbol{S}$ the ball of radius $R$ from \eqref{they-stay-in-ball}, 
 to the functionals 
$\mathscr{M}_k: = \calM_{0,\mathrm{red}}^{\mu_k,\nu_k}$
 (extended to $\R\times (\Qha{\setminus}\Qpp)\times \R \times (\Qha{\setminus}\Qpp)$ as described for 
$\calM_{0,\mathrm{red}}^{\mu_k,0}$ in the proof of Thm~\ref{teo:exparBVsol}),
 and $\mathscr{M}_0: = \calM_{0,\mathrm{red}}^{0,0}$. 
 With this aim, we only need to check that the
$\Gamma$-liminf estimate in \eqref{G-liminf} holds in our context. Clearly, it is sufficient to check   that 
for any sequence $ (t_k,q_k,t_k',q_k')_k$ with  
$ (t_k,q_k,t_k',q_k') \weaksto (t,q,0,q') \text{ in } \R \times 
  \boldsymbol{S} \times \R \times
  \Qpp  \text{ as } k \to \infty$ there holds 
\begin{equation}\label{0402221311}
  \calM_{0,\mathrm{red}}^{0,0}(t,q,0,q')  \leq 
  \liminf_{k\to+\infty}
 \calM_{0,\mathrm{red}}^{\mu_k,\nu_k}(t_k,q_k,t_k',q_k').
 \end{equation}
 The above estimate easily follows from Lemma \ref{le:2307191000}  in the case in which 
 $(t_k)_k$ admits a strictly positive subsequence. Instead, 
  if there exists $\bar k \in \N$ such that  $t_k' \equiv 0$ for $k \geq \bar k$, then 
 $ \calM_{0,\mathrm{red}}^{\mu_k,\nu_k}(t_k,q_k,t_k',q_k') =  \DVito_{\nu_k}(q_k') \, \DVitoskk(t_k,q_k) $
 for all $k\geq \bar k$ and we may argue in the following way.  
  When $z'=0$ and   $\DVito^*(t,q)=0$  we use that  
\begin{equation*}
 \calM_{0,\mathrm{red}}^{\mu_k,\nu_k}(t_k, q_k, t'_k, q_k') \geq 
\begin{cases}  
\DVito(u_k', p_k') \, \DVito^{*,\mu_k}(t_k,q_k), 
\\
 \|z_k'\|_{L^2}\, \tilded_{L^2(\Omega)} ({-}\mathrm{D}_z \calE_{ \mu_k } (t_k,q_k),\partial\calR(0))\,,
 \end{cases} 
\end{equation*}
 which follows by neglecting some terms in  the expression for $\calM_{0,\mathrm{red}}^{\mu_k,\nu_k}$.
Then, as  in Thm.\ \ref{teo:exparBVsol} we use Lemma~\ref{le:2307191000} to pass to the limit in the two terms on the right-hand side of the above inequality in the cases 
 $z'=0$ and $\DVitosred(t,q)=0$, respectively.
In the remaining case $\|z'\|_{L^2} \DVito^*(t,q)>0$, it holds $\lim_{k \to \infty} \calM_{0,\mathrm{red}}^{\mu_k,\nu_k}(t_k,q_k,t_k',q_k') =+\infty$: indeed, by Lemma~\ref{le:2307191000} and since $z_{k}'\weakto z'$ in~$L^2(\Omega)$, we have that  $\|z'_k\|_{L^2} \DVito^{*,\mu_k}(t_k,q_k)>c$  for a suitable $c>0$. Since 
\[
 \calM_{0,\mathrm{red}}^{\mu_k,\nu_k}(t_{k}, q_{k}, t_{k}', q'_{k}) \geq \frac{1}{\sqrt{\nu_k}} \|z'_k\|_{L^2}   \DVito^{*,\mu_k}(t_k,q_k), 
\] 
we again conclude  estimate \eqref{0402221311}   as $(\mu_k)_k$ and  $(\nu_k)_k$  vanish.

Thus, by Lemma~\ref{l:MIE-ROS}, we have proven \eqref{0402221310}. This finishes the proof.
\end{proof}
%
\medskip
 
\appendix
\section{Some technical results}
We collect the results employed in the proofs of Theorems \ref{teo:exparBVsol} and \ref{teo:exparBVsolsingle}. 
\begin{lemma}\label{lemma:giu}
Let $\sff, \sff_k\,:\ [0,S]\to[0,T]$ be nondecreasing functions such that $\sff_k\to\sff$ uniformly. 
Let $A: = \{ s\in [0,S]\,: \ \sff'(s)>0\}$.
Then for a.a.\ $s\in A$ there is a subsequence $k_j$
and, for every $j$, there is $s_{k_j}\in(0,S)$ such that
$|s_{k_j}{-}s|<\frac1j$ and $\sff'_{k_j}(s_{k_j})>0$.
\end{lemma}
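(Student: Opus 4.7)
The plan is a direct, constructive argument using the definition of derivative together with uniform convergence and absolute continuity of the $\sff_k$'s (implicit in the application, since each $\sft_k$ lies in $W^{1,\infty}(0,S;[0,T])$). Fix a point $s \in A$ at which $\sff'(s)$ exists with $\sff'(s)>0$; this holds for almost every $s \in A$ by Lebesgue's differentiation theorem for monotone functions. Setting $c := \sff'(s)/2 > 0$, the existence of the (one-sided and hence two-sided) derivative and monotonicity give $\eta(s) > 0$ such that
\begin{equation*}
\sff(s+h) - \sff(s-h) \geq 2 c h \qquad \text{for every } h \in (0, \eta(s)),
\end{equation*}
since both one-sided difference quotients at $s$ converge to $\sff'(s)$ and can be made at least $c$ by shrinking $\eta(s)$.

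Next, for each integer $j$ large enough that $1/(2j) < \eta(s)$ and $(s{-}1/(2j), s{+}1/(2j)) \subset (0,S)$, I would set $h := 1/(2j)$ and invoke the uniform convergence $\sff_k \to \sff$ at the two endpoints $s\pm h$ to select a strictly increasing sequence of indices $k_j$ such that
\begin{equation*}
\sff_{k_j}(s+h) - \sff_{k_j}(s-h) \geq c h > 0.
\end{equation*}
Because $\sff_{k_j}$ is absolutely continuous on $[0,S]$, the fundamental theorem of Lebesgue calculus gives
\begin{equation*}
\int_{s-h}^{s+h} \sff_{k_j}'(r) \dd r \,=\, \sff_{k_j}(s+h) - \sff_{k_j}(s-h) \,\geq\, c h \,>\, 0,
\end{equation*}
so the set $\{ r \in (s-h, s+h) : \sff_{k_j}'(r) \text{ exists and is } >0 \}$ has positive Lebesgue measure. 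Choosing a point $s_{k_j}$ in this set produces the required index and point, since $s_{k_j} \in (0,S)$, $|s_{k_j}-s| < h < 1/j$, and $\sff'_{k_j}(s_{k_j})>0$.

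The main --- and essentially only --- subtlety is the use of absolute continuity of the $\sff_k$'s. Without it, a nondecreasing function may concentrate its growth on a singular set (as with the Cantor staircase, whose classical derivative vanishes a.e.), and the implication ``positive increment over an interval $\Rightarrow$ positive classical derivative on a subset of positive measure'' could fail. In the paper's context, however, the relevant $\sft_k$ are Lipschitz, so this implication is automatic and the construction closes; the remaining diagonal extraction (ensuring $k_j \nearrow \infty$ as $j\to\infty$) is routine.
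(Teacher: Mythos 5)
Your construction is sound, but it takes a genuinely different route from the paper's and, strictly speaking, proves the lemma only under an extra hypothesis. The paper argues globally: it introduces the exceptional set $\Xi$ of points $s\in A$ near which $\sff_k'$ vanishes a.e.\ for all large $k$, and shows $\Xi$ is at most countable — $\sff$ is injective on $\Xi$ (constancy of $\sff$ between two such points would contradict $\sff'>0$ there), and for $s\in\Xi$ and $k$ large the value $\sff_k(s)$ is a jump point of the inverse function $\phi_k\in\BV(0,T)$, whose jumps are countable; at every $s\in A\setminus\Xi$ the conclusion then follows by negating the defining property of $\Xi$. You instead argue pointwise at a.e.\ $s\in A$: the positive derivative of $\sff$ yields a positive increment over $(s-h,s+h)$, uniform convergence transfers it to $\sff_{k_j}$, and the fundamental theorem of calculus locates a point of positive derivative of $\sff_{k_j}$ in the window. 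Your argument is more elementary and even quantitative (it gives a positive-measure set of such points in each window), and the large-$j$/relabeling issues you leave implicit are indeed routine.

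The caveat is the one you flag yourself: the step from a positive increment of $\sff_{k_j}$ to a point where $\sff_{k_j}'>0$ uses absolute continuity of the $\sff_k$, which is not among the hypotheses of the lemma (the $\sff_k$ are only assumed nondecreasing), so as a proof of the statement as written it is incomplete. This is harmless for the paper's purposes, since in both applications (Theorems \ref{teo:exparBVsol} and \ref{teo:exparBVsolsingle}) the reparameterized times $\sft_k$ are Lipschitz by \eqref{assum:bound} and \eqref{assum:bound-bis}. It is worth observing that the paper's own proof leans on a closely related fact: the claim that $\sff_k(s^i)$ is a jump point of $\phi_k$ amounts to deducing that $\sff_k$ is constant near $s^i$ from $\sff_k'=0$ a.e.\ there, which likewise fails for singular continuous (Cantor-type) $\sff_k$. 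So in practice the two arguments cover essentially the same class of approximating functions; yours simply makes the regularity requirement explicit, while the paper's countability argument is designed to avoid any quantitative use of the derivative of $\sff$.
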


\begin{proof}
Let
\[
\Xi:= \{ s \in A \,:\ \exists\, k(s)\ \forall\, k> k(s) \ \exists\,\delta_k  \ \sff'_k(\sigma)=0 \ \text{for a.a.}\ \sigma\in (s-\delta_k,s+\delta_k) \} .
\]
We shall prove that $\Xi$ is at most countable, which implies the statement of the lemma.
Let $s^1,s^2\in\Xi$. 
Then one has $\sff(s^1)\neq\sff(s^2)$; indeed, 
since $\sff$ is nondecreasing, $\sff(s^1)=\sff(s^2)$ would imply that $\sff$ is constant in $(s^1,s^2)$,
which is in contrast with the assumption $s^1,s^2\in A$ (and thus $\sff'(s^1),\sff'(s^2)>0$).
Let now $y^i_k:=\sff_k(s^i)$ for $i=1,2$.
Since $y^i_k\to\sff(s^i)$ for $i=1,2$, for $k$ sufficiently large one has $|y^1_k-y^2_k|>\frac12|\sff(s^1)-\sff(s^2)|>0$.
Let $\phi_k\in \BV(0,T)$ denote the inverse function of $\sff_k$.
It turns out that $y^1_k,y^2_k$ are both jump points of $\phi_k$ for every $k>\max\{k(s^1),k(s^2)\}$.
Since the jump points of a $\BV$ function are countable, it follows that $\Xi$ is countable, too. 
\end{proof}

  \begin{lemma}\cite[Lemma 7.8]{Crismale-Rossi}\label{le:2307191000}  
 Let $t_k \to t$ in $[0,T]$, $\mu_k \to 0$, $ (q_k)_k=(u_k, z_k, p_k) _k\subset \Qpp$ 
  such that  the following convergences hold as $k\to+\infty$:   $q_k \weaksto   q=(u, z, p)$ in $\Qpp$,  
$e(t_k)=\rmE(u_k+w(t_k))-p_k \to e(t)=\rmE(u+w(t))-p $ in $\Lnn$ and $\mu_k \, p_k \to 0$ in $L^2(\Omega;\MD)$. Then
\begin{subequations}\label{eqs:1106191955'}
\begin{align}
\calS_u \calE_0(t, q)  &\leq \liminf_{ k \to +\infty} \| \mathrm{D}_u \calE_{\mu_k} (t_k, q_k )\|_{( H^1, \bbD )^*} ,\label{1106191956'} 
\\
\tilded_{L^2} ({-}\mathrm{D}_z \calE_0 (t,q ),\partial\calR(0)) & \leq \liminf_{ k \to +\infty} \tilded_{L^2} ({-}\mathrm{D}_z \calE_{\mu_k} (t_k,q_k ),\partial\calR(0)),\label{2307191134}
\\
\calW_p \calE_0(t, q) & \leq \liminf_{ k \to +\infty} \dLtwo ({-}\mathrm{D}_p \calE_{\mu_k} ( t_k,q_k ),\partial_\pi \calH( z_k,0)). \label{1106191958'}
\end{align}
\end{subequations}
\end{lemma}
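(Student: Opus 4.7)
I plan to reduce all three lower-semicontinuity inequalities to the same backbone: a weak/strong convergence of the relevant derivatives of $\calE_{\mu_k}$, combined with either a pointwise-in-sup passage to the limit or, for the term with the $\As$-Laplacian, an extraction-of-near-optimal-selection argument. The two basic ingredients I would set up at the outset are: (i) by the compact embedding $\Hs(\Omega)\Subset C^0(\overline{\Omega})$ (since $\rmm>n/2$), $z_k\to z$ uniformly on $\overline{\Omega}$, and combined with $e(t_k)\to e(t)$ in $\Lnn$ and \eqref{spd}--\eqref{C2} this yields $\sigma_k:=\bbC(z_k)e(t_k)\to\sigma:=\bbC(z)e(t)$ strongly in $\Lnn$; (ii) consequently $-\rmD_p\calE_{\mu_k}(t_k,q_k)=\sigma_{\dev,k}-\mu_k p_k\to\sigma_\dev=-\rmD_p\calE_0(t,q)$ strongly in $L^2(\Omega;\MD)$, thanks to the additional hypothesis $\mu_k p_k\to 0$ in $L^2$, while $F(t_k)\to F(t)$ in $\BD(\Omega)^*$ by \eqref{forces-u}.

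\textbf{Proofs of \eqref{1106191956'} and \eqref{1106191958'}.} These two are structurally identical and follow from the dual-supremum representations \eqref{surrogate-1}--\eqref{surrogate-2} together with a pointwise passage to the limit inside the sup. For \eqref{1106191956'} I would fix an arbitrary $\eta_u\in H_{\Dir}^1(\Omega;\R^n)$ with $\|\eta_u\|_{H^1,\bbD}\le 1$ and use \eqref{2307191723} to write
\[
\langle-\Diver\,\sigma(t)-F(t),\eta_u\rangle_{H^1}=\int_\Omega\sigma(t)\mdot\rmE(\eta_u)\,\dd x-\langle F(t),\eta_u\rangle,
\]
observing by (i)--(ii) that this equals $\lim_k\langle-\Diver\,\sigma_k(t_k)-F(t_k),\eta_u\rangle_{H^1}$; the latter is bounded above, for each $k$, by $\|-\rmD_u\calE_{\mu_k}(t_k,q_k)\|_{(H^1,\bbD)^*}$, and taking the supremum in $\eta_u$ on the left yields \eqref{1106191956'}. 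For \eqref{1106191958'} the argument is the same with test functions $\eta_p\in L^2(\Omega;\MD)$ of norm $\leq 1$; the only extra ingredient is the continuity $\calH(z_k,\eta_p)\to\calH(z,\eta_p)$, which follows from the uniform convergence $z_k\to z$, the Lipschitz dependence \eqref{Ksets-3} of $K(\cdot)$, and a dominated-convergence argument based on the bound $|H(z_k(x),\eta_p(x))|\le\bar R|\eta_p(x)|$ guaranteed by \eqref{Ksets-2}.

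\textbf{Proof of \eqref{2307191134} and main obstacle.} This is the genuinely delicate bound, because $-\rmD_z\calE_{\mu_k}(t_k,q_k)=-\tfrac12\bbC'(z_k)e(t_k){:}e(t_k)-W'(z_k)-\As z_k$ lives only in $\Hs(\Omega)^*$, whereas $\tilded_{L^2}$ is finite exactly when the difference with some $\gamma\in\partial\calR(0)$ belongs to $L^2$. Assuming, without loss of generality, that $L:=\liminf_k\tilded_{L^2}(-\rmD_z\calE_{\mu_k}(t_k,q_k),\partial\calR(0))<+\infty$, I would extract a (not relabeled) subsequence realizing the liminf and pick, for each $k$, a nearly optimal $\gamma_k\in\partial\calR(0)$ so that
\[
\beta_k:=-\rmD_z\calE_{\mu_k}(t_k,q_k)-\gamma_k\in L^2(\Omega),\qquad \|\beta_k\|_{L^2}^2\le\tilded_{L^2}(-\rmD_z\calE_{\mu_k}(t_k,q_k),\partial\calR(0))^2+\tfrac1k.
\]
The uniform $L^2$-bound on $(\beta_k)_k$ extracts a weak limit $\beta_k\weakto\beta$ in $L^2(\Omega)$, while (i), the pointwise convergence of $W'(z_k)$, the $L^1$-convergence of $\bbC'(z_k)e(t_k){:}e(t_k)$ and the weak convergence $\As z_k\weakto\As z$ in $\Hs(\Omega)^*$ jointly give $-\rmD_z\calE_{\mu_k}(t_k,q_k)\weakto-\rmD_z\calE_0(t,q)$ in $\Hs(\Omega)^*$. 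Hence $\gamma_k\weakto\gamma:=-\rmD_z\calE_0(t,q)-\beta$ in $\Hs(\Omega)^*$; the set $\partial\calR(0)$ being convex and weakly$^*$ closed in $\Hs(\Omega)^*$, one has $\gamma\in\partial\calR(0)$, and
\[
\tilded_{L^2}(-\rmD_z\calE_0(t,q),\partial\calR(0))\le\|-\rmD_z\calE_0(t,q)-\gamma\|_{L^2}=\|\beta\|_{L^2}\le\liminf_k\|\beta_k\|_{L^2}\le L
\]
by weak $L^2$-lower semicontinuity of the norm, proving \eqref{2307191134}. The main obstacle is precisely this $L^2$-versus-$\Hs^*$ mismatch: the crucial lever is that the finite-distance condition furnishes the $L^2$-bound on the nearly optimal differences $\beta_k$, which in turn supplies enough compactness to identify $\gamma$ as an admissible competitor in the distance at the limit.
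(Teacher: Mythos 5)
The paper itself contains no proof of this lemma --- it is imported verbatim from \cite[Lemma 7.8]{Crismale-Rossi} --- so there is no internal argument to compare against; your proof is correct and follows exactly the route one expects (and that the cited reference takes): strong $L^2$ convergence of the stresses $\bbC(z_k)e(t_k)$ via the compact embedding $\Hs(\Omega)\Subset \mathrm{C}^0(\overline\Omega)$, the dual supremum representations \eqref{surrogate-1}--\eqref{surrogate-2} with a fixed test function for \eqref{1106191956'} and \eqref{1106191958'}, and near-optimal selections $\gamma_k\in\partial\calR(0)$ combined with weak $L^2$ compactness of $\beta_k$, weak closedness of $\partial\calR(0)$ in $\Hs(\Omega)^*$, and weak lower semicontinuity of the $L^2$-norm for \eqref{2307191134}. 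The only step worth making explicit is the convergence of $W'(z_k)$: this needs $\min_{\overline\Omega} z>0$ (implicit in the statement, since otherwise $\rmD_z\calE_0(t,q)$ is not defined), whereupon the uniform convergence $z_k\to z$ you already established gives $z_k\ge\delta>0$ for large $k$ and hence uniform convergence of $W'(z_k)$, so that $-\rmD_z\calE_{\mu_k}(t_k,q_k)\weakto-\rmD_z\calE_0(t,q)$ in $\Hs(\Omega)^*$ as you claim.
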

\par
We borrow our final auxiliary result from \cite{MieRosBVMR}. The proof, therein developed in the case of a sequence $(t_k,q_k)$ with values in $\R \times \boldsymbol{Q}$ with $ \boldsymbol{Q}$ a reflexive space, can be straightforwardly adapted to the case of the dual of a separable space.
\begin{lemma}\cite[Prop.\ 5.2]{MieRosBVMR}
\label{l:MIE-ROS}
Let  $\boldsymbol{Q}$ be the dual of a separable Banach space, let
 $\boldsymbol{S}$  be a weakly$^*$ closed subset  of $\boldsymbol{Q}$, 
 and let $(\mathscr{M}_k)_k, \, \mathscr{M}_0: \R \times \boldsymbol{S} \times \R \times
  \boldsymbol{Q} \to [0,\infty]$ be measurable and weakly$^*$ 
  lower semicontinuous  functionals fulfilling the $\Gamma$-$\liminf$ estimate
  \begin{equation}
  \label{G-liminf}
  \begin{aligned}
  &
  \left((t_k,q_k,t_k',q_k') \weaksto (t,q,t',q') \text{ in } \R \times 
  \boldsymbol{S} \times \R \times
  \boldsymbol{Q}  \text{ as } k \to \infty \right)
  \  \Longrightarrow 
  \ \mathscr{M}_0 (t,q,t',q') \leq \liminf_{k\to \infty} \mathscr{M}_k (t_k,q_k,t_k',q_k').
  \end{aligned}
  \end{equation}
Suppose that,  the functionals $\mathscr{M}_0(t,q,\cdot,\cdot) $ and 
 $\mathscr{M}_k(t,q,\cdot,\cdot) $ are convex for every
 $k\in \N$ and $(t,q) \in \R \times 
  \boldsymbol{S}$. 
  Let $(\sft_k,\sfq_k), \, (\sft,\sfq) \subset \AC ([a,b];\R\times \boldsymbol{S})$ fulfill
  \[
  \sft_k(s) \to \sft(s), \quad \sfq_k(s)\weaksto \sfq(s) \text{ for all } s \in [a,b], 
  \qquad (\sft_k',\sfq_k') \weakto (\sft',\sfq') \text{ in } L^1(a,b;\R\times \boldsymbol{Q}). 
  \]
  Then,
  \[
  \liminf_{k\to\infty} \int_a^b \mathscr{M}_k(\sft_k(s), \sfq_k(s), \sft_k'(s), \sfq_k'(s)) 
  \dd s  \geq \int_a^b \mathscr{M}_0(\sft(s), \sfq(s), \sft'(s), \sfq'(s)) 
  \dd s .
  \]
\end{lemma}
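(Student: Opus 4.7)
The plan is to combine the pointwise $\Gamma$-$\liminf$ hypothesis \eqref{G-liminf} with a Young-measure-based lower semicontinuity theorem for integral functionals whose integrands depend on the index $k$, and to conclude by Jensen's inequality, exploiting the convexity of $\mathscr{M}_0$ in the velocity variables $(t',q')$. I would start by assuming, without loss of generality, that the $\liminf$ in the claim is finite and, after passing to a (not relabelled) subsequence, that it is a genuine limit $L<+\infty$.

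First I would recast the integrands as $f_k(s,\xi):=\mathscr{M}_k(\sft_k(s),\sfq_k(s),\xi)$ and $f_0(s,\xi):=\mathscr{M}_0(\sft(s),\sfq(s),\xi)$, with $\xi=(t',q')\in\R\times\boldsymbol{Q}$. Each $f_k$ is a normal integrand: measurability in $s$ follows from the continuity of $(\sft_k,\sfq_k)$ and the measurability of $\mathscr{M}_k$, while weak$^*$ lower semicontinuity and convexity in $\xi$ are inherited directly from $\mathscr{M}_k$. The pointwise convergence $(\sft_k(s),\sfq_k(s))\weaksto(\sft(s),\sfq(s))$ at every $s$, combined with hypothesis \eqref{G-liminf} applied at fixed $s$, furnishes the sequential $\Gamma$-$\liminf$ inequality
\begin{equation*}
f_0(s,\xi) \le \liminf_{k\to\infty} f_k(s,\xi_k) \qquad\text{whenever } \xi_k\weaksto\xi \text{ in } \R\times\boldsymbol{Q}.
\end{equation*}

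The core step is to apply the fundamental theorem for Young measures to $v_k:=(\sft_k',\sfq_k')$. Since $(v_k)$ converges weakly in $L^1(a,b;\R\times\boldsymbol{Q})$ it is equi-integrable by Dunford--Pettis, and hence generates a (weak$^*$) Young measure $s\mapsto\nu_s$ on $\R\times\boldsymbol{Q}$ whose barycenter satisfies $\int\xi\,d\nu_s(\xi)=(\sft'(s),\sfq'(s))$ for almost every $s$. A Balder-type Young-measure Fatou lemma, applied to the normal integrands $f_k$ with sequential $\Gamma$-$\liminf$ bounded below by $f_0$, then yields
\begin{equation*}
L \;=\; \liminf_{k\to\infty}\int_a^b f_k(s,v_k(s))\,ds \;\ge\; \int_a^b\!\!\int_{\R\times\boldsymbol{Q}} f_0(s,\xi)\,d\nu_s(\xi)\,ds.
\end{equation*}
To conclude I would invoke Jensen's inequality slice by slice: since $f_0(s,\cdot)=\mathscr{M}_0(\sft(s),\sfq(s),\cdot)$ is convex and weakly$^*$ lower semicontinuous on $\R\times\boldsymbol{Q}$, and $\nu_s$ is a probability measure with barycenter $(\sft'(s),\sfq'(s))$,
\begin{equation*}
\int_{\R\times\boldsymbol{Q}} f_0(s,\xi)\,d\nu_s(\xi) \;\ge\; f_0\bigl(s,(\sft'(s),\sfq'(s))\bigr) \;=\; \mathscr{M}_0(\sft(s),\sfq(s),\sft'(s),\sfq'(s)),
\end{equation*}
and integrating in $s$ delivers the claim.

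The principal obstacle is the rigorous deployment of Young-measure machinery when $\boldsymbol{Q}$ is merely the dual of a separable Banach space equipped with the weak$^*$ topology. One has to either restrict to weak$^*$-compact balls (the $L^1$-bound on $v_k$ confines the Young measure's mass, and the weak$^*$ topology is metrizable on such balls, which puts us in a Polish-space setting where the standard theory of Balder applies), or invoke a Young-measure formalism tailored to dual spaces. Jensen's inequality in this weak$^*$ setting then needs to be justified via the convex weak$^*$-lower semicontinuous envelope and a Hahn--Banach separation argument, using the fact that $\mathscr{M}_0(t,q,\cdot,\cdot)$ is the supremum of its weakly$^*$-continuous affine minorants.
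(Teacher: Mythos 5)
There is nothing in the paper to compare against line by line: Lemma \ref{l:MIE-ROS} is not proved here, it is imported verbatim from \cite[Prop.\ 5.2]{MieRosBVMR}, with the only remark that the proof given there for reflexive $\boldsymbol Q$ ``can be straightforwardly adapted'' to the dual of a separable space. Your Young-measure-plus-Jensen strategy is precisely the standard Balder--Ioffe route on which such lower-semicontinuity statements in the Mielke--Rossi--Savar\'e line of papers rest, so in spirit you are reconstructing the cited proof rather than taking a genuinely different path, and the outline is correct: the pointwise convergence of $(\sft_k(s),\sfq_k(s))$ turns \eqref{G-liminf} into a sequential $\Gamma$-$\liminf$ inequality for the frozen-slice integrands, a lower-closure theorem for Young measures gives the double-integral bound, and convexity closes the argument via the barycenter.

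Two points deserve sharper treatment than your sketch gives them. First, the ``Balder-type Fatou lemma'' with $k$-dependent integrands is the entire content of the lemma, and your remark about restricting to weak$^*$-compact balls is not quite the right fix: the velocities $v_k(s)=(\sft_k'(s),\sfq_k'(s))$ are only $L^1$-bounded, not uniformly bounded in $s$, so you cannot confine them to a single metrizable ball. What saves you is tightness: $\sup_k\|v_k\|_{L^1}<\infty$ together with the weak$^*$-compactness of norm balls makes $\R\times\boldsymbol Q$ with the weak$^*$ topology a countable union of metrizable compacta (a Lusin space), which is exactly the setting of Balder's lower-closure theory; the barycenter identification then comes from testing with $g(s)\langle\cdot,x\rangle$, $x$ in the predual, using equi-integrability. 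Second, for Jensen you need $\mathscr{M}_0(t,q,\cdot,\cdot)$ to be the supremum of its $\sigma(\boldsymbol Q,X)$-continuous affine minorants; since the hypothesis is (sequential) weak$^*$ lower semicontinuity, you should invoke convexity plus Krein--\v{S}mulian (separability of the predual makes bounded sets weak$^*$-metrizable) to upgrade to topological weak$^*$ lower semicontinuity before applying Fenchel--Moreau. With these two points made precise, your plan is a faithful reconstruction of the argument the paper delegates to \cite{MieRosBVMR}.
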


\end{document}